\newcommand{\ul}[1]{\underline{#1}}
\newcommand{\wt}[1]{\widetilde{#1}}
\newcommand{\mc}[1]{\mathcal{#1}}
\newcommand{\opn}[1]{\operatorname{#1}}
\newcommand{\abs}[1] {\lvert #1 \rvert}
\newcommand{\gen}[1] {\left\langle #1 \right\rangle}
\newcommand{\rg}{\mathcal{R}_g}
\newcommand{\CF} {\widehat{\operatorname{CF}}}
\newcommand{\HF} {\widehat{\operatorname{HF}}}
\newcommand{\HFK} {\widehat{\operatorname{HFK}}}
\newcommand{\mfs}{\mathfrak{s}}
\newcommand{\Sym}{\text{Sym}}
\newcommand{\dcob}[1]{\wt{\text{Cob}}_{\text{#1+1}}}
\newcommand{\tbf}[1]{\textbf{#1}}
\newcommand{\zoltan}{Zolt\'an Szab\'o}
\newcommand{\petero}{Peter Ozsv\'ath}
\newcommand{\into}{\hookrightarrow}
\newcommand{\onto}{\twoheadrightarrow}
\renewcommand{\L}{\mc{L}}
\newtheorem{theorem}{Theorem}
\newtheorem{lemma}{Lemma}
\newtheorem{proposition}{Proposition}
\newtheorem{defn}{Definition}
\newtheorem{conj}{Conjecture}
\newtheorem{rem}{Remark}
\def\R{\mathbb{R}}
\def\C{\mathbb{C}}
\def\Z{\mathbb{Z}}
\def\Q{\mathbb{Q}}
\def\T{\mathbb{T}}
\def\HH{\mathcal{H}}
\def\x{\mathbf{x}}
\def\T{\mathbb{T}}
\def\AA{\mathcal{A}}
\def\BB{\mathcal{B}}
\def\GG{\mathcal{G}}
\def\HH{\mathcal{H}}
\def\SS{\mathfrak{S}}
\def\minus{\smallsetminus}
\def\Th{{}^{\text{th}}}
\DeclareMathOperator{\sign}{sign}
\DeclareMathOperator{\gr}{gr} 
\DeclareMathOperator{\rank}{rank}
\author{Sam Jay Lewallen}
\title{Floerg\r{a}sbord}
\abstract{In this thesis, we prove several results concerning field-theoretic invariants of knots and 3-manifolds. 

In Chapter 2, for any knot $K$ in a closed, oriented 3-manifold $M$, we use $SU(2)$ representation spaces and the Lagrangian field theory framework of Wehrheim and Woodward to define a new homological knot invariant $\mc{S}(K)$. We then use a result of Ivan Smith to show that when $K$ is a (1,1) knot in $S^3$ (a set of knots which includes torus knots, for example), the rank of $\mc{S}(K)\otimes \C$ agrees with the rank of knot Floer homology, $\widehat{HFK}(K)\otimes \C$, and we conjecture that this holds in general for any knot $K$.

In Chapter 3, we prove a somewhat strange result, giving a purely topological formula for the Jones polynomial of a 2-bridge knot $K\subset S^3$. First, for any lens space $L(p,q)$, we combine the $d$-invariants from Heegaard Floer homology with certain Atiyah-Patodi-Singer/Casson-Gordon $\rho$-invariants to define a function $$I_{p,q}: \Z/p\Z \to \Z$$

Let $K = K(p,q)$ denote the 2-bridge knot in $S^3$ whose double-branched cover is $L(p,q)$, let $\sigma(K)$ denote the knot signature, and let $\mc{O}$ denote the set of relative orientations of $K$, which has cardinality $2^{(\# \text{ of components of } K) - 1}$. Then we prove the following formula for the Jones polynomial $J(K)$: $$i^{-\sigma(K)}q^{3\sigma(K)}J(K)= \sum_{o\in\mathcal{O}}(iq)^{2\sigma(K^{o})} +\left(q^{-1}-q^{1}\right)\sum_{\mfs\in\Z/p\Z}(iq)^{I_{p,q}(\mfs)}$$ (here, $i = \sqrt{-1}$).

In Chapter 4, we present joint work with Adam Levine, concerning Heegaard Floer homology and the orderability of fundamental groups. Namely, we prove that if $\widehat{CF}(M)$ is particularly simple, i.e., $M$ is what we call a ``strong $L$-space,'' then $\pi_1(M)$ is not left-orderable. 

}
\begin{document}

\chapter{Introduction and statements of main results}

Every smooth, closed, oriented $n$-manifold $M$ admits a Morse function $f:M\to [0,1]$, presenting $M$ as a singular fibration over the interval. In other words, $M$ can be visualized as a path of manifolds of one lower dimension -- the regular level sets of $f$ -- starting and ending with the empty manifold, and constant except at the critical points of $f$. At each critical point, the level set is modified by the attachment of a single $n$-cell. More generally, if the boundary $N = \partial M$ is non-empty, we can take $f(N) = \{0,1\}$, and the cobordism $M$ becomes a path of $n$-cell attachments relating $N = f^{-1}(0)$ to $N' = f^{-1}(1)$. 

One might hope to use this picture to define invariants of $M$, in the following way. First, define an invariant $\Gamma(N)$ of $(n-1)$-manifolds, which ought to be easier, and understand the relationship between $\Gamma(N)$ and $\Gamma(N')$ when $N$ and $N'$ are related by an $n$-cell attachment. The invariant associated to $M$ should then be something like a union of invariants $\Gamma(N_i)$ over all regular level sets of $f$, ``quotiented'' by the relation between $\Gamma(N_i)$ and $\Gamma(N_{i+1})$ induced by $f$. For example, using Van-Kampen's theorem, the fundamental group $\pi_1(M)$ can be expressed in this way.

In the last twenty years, ideas from quantum physics have led to an important set of invariants fitting into this framework, the so-called ``topological (quantum) field theories'' (TQFT's) of Atiyah, Segal, Witten, and others (see \cite{atiyah1988new}, for example). Furthermore, as Bott elegantly describes in \cite{bott1993reflection}, when the dimension $n$ equals 3, the fibers of our Morse function $f$ come equipped with compatible holomorphic (and symplectic) structures, induced from a Riemannian metric on $M$. Therefore in this case we can hope to concoct $\Gamma(N)$ via holomorphic methods, and so obtain a particularly rich set of topological field theories in dimension 3. In this thesis, we present three distinct studies on invariants derived from $3d$ field theories of this nature. We will now give a brief description of each of these results.

To begin, taking the dimension $n=3$ in the above discussion, we are advised to associate an invariant $\Gamma(N)$ to smooth 2-dimensional surfaces $N$, making use of their natural holomorphic structures. Insights from theoretical physics have led to two particularly prevalent and powerful examples, each giving an invariant of surfaces with values in the set of holomorphic (alternatively, symplectic) manifolds of arbitrary dimension.  In the first case, we take $$\Gamma_\text{Don}(N) := \mc{M}_r(N), $$ where $\mc{M}_r(N)$ denotes a certain space of holomorphic bundles of rank $r$ on $N$, with respect to some choice of Riemann surface structure. Furthermore, the famous theorem of Narasimhan and Seshadri \cite{narasimhan1965stable} shows that the complex manifold $\mc{M}_r(N)$ is homeomorphic to a symplectic manifold consisting of conjugacy classes of $U(r)$-representations of $\pi_1(N)$.

In the second case, we take $$\Gamma_\text{SW}(N) := \opn{Sym}^g(N), $$ where $g \geq 0 $ denotes the genus of $N$, and $\opn{Sym}^g$ denotes the $g$-fold symmetric product. A Riemann surface structure on $N$ induces a complex structure on $\opn{Sym}^g(N)$. 

Using the ideas of Lagrangian Floer homology and its generalizations, we can combine these manifold-valued invariants for a sequence of surfaces in order to produce homological invariants of 3-manifolds, as proposed above. Furthermore, each of these 3-manifold invariants is conjectured to agree with an invariant coming from gauge theory -- Donaldson theory in the first case, and Seiberg-Witten theory in the second. Finally, it is a longstanding conjecture that the information contained in the Donaldson and Seiberg-Witten invariants agree (at least when the invariants are suitably interpreted), and therefore all four of these frameworks are conjectured to be equivalent, or at least to have strong inter-relationships. 

In addition, many variations on these frameworks lead to other, related manifold invariants. For example, suppose we fix a knot $K\subset S^3$. The 3-manifold invariants above deriving from $\opn{Sym}^g(N)$ are generally called Heegaard Floer invariants, and for the knot $K$, a particularly interesting example is the Heegaard knot Floer homology group $\widehat{HFK}(K)$, which is an invariant of $K$. The first result of our thesis is to construct an analogous knot invariant using representation spaces rather than symmetric products: 

\begin{defn}[symplectic instanton knot homology $\mc{S}(K)$, see Definition \ref{maindef1}] For a knot $K\subset S^3$, we use moduli spaces of $U(2)$ representations to define an abelian group $\mc{S}(K)$, the \textbf{symplectic instanton knot homology}, which is an invariant of $K$.
\end{defn}

This invariant is defined using a formalism developed by Wehrheim and Woodward, and it extends to an invariant of knots in arbitrary 3-manifolds. The main theorem of Chapter 2 uses a result of Ivan Smith to relate $\mc{S}(K)$ and $\widehat{HFK}(K)$ for a special class of knots in $S^3$ called (1,1) knots:

\begin{theorem}[main theorem, chapter 2, see Theorem \ref{main1}]\label{main1intro} For all $(1,1)$ knots $K\subset S^3$, the ranks of $\mc{S}(K)\otimes \C$ and $\HFK(K)\otimes \C$ are equal. 
\end{theorem}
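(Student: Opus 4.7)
The plan is to reduce both sides to Lagrangian Floer homology computations on the same space via a genus-one Heegaard decomposition, and then invoke Smith's identification of the relevant $U(2)$ representation variety with a symmetric product.

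First, I would unpack the $\widehat{HFK}$ side. A $(1,1)$ knot $K\subset S^3$ admits a doubly-pointed genus-one Heegaard diagram $(T,\alpha,\beta,w,z)$, where $T$ is the Heegaard torus, $\alpha$ and $\beta$ are simple closed curves bounding disks in the two solid tori, and the pair $(w,z)$ records the bridge arcs of $K$. Since $\text{Sym}^1(T)=T$, the rank of $\widehat{HFK}(K)\otimes\C$ is the rank of the Lagrangian Floer homology $HF(\alpha,\beta)$ on $T$ relative to the basepoints $w,z$ (or the associated bigraded complex, whose total rank is what we are tracking).

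Next I would unpack $\mathcal{S}(K)$ in the same setup. Under the Wehrheim--Woodward formalism, the $(1,1)$ decomposition determines a (singular) symplectic manifold $\mathcal{R}$ given by the $U(2)$ representation variety of the once-punctured torus with fixed central holonomy around the puncture determined by the meridian of $K$; this is a $2$-dimensional symplectic orbifold. Each handlebody together with its bridge arc contributes a Lagrangian $L_\alpha,L_\beta\subset \mathcal{R}$ corresponding to representations that extend over that half, and $\mathcal{S}(K)$ is the Lagrangian Floer homology $HF(L_\alpha,L_\beta)$.

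Now I would apply Smith's theorem, which in this rank-two, genus-one punctured case produces a (birational) symplectomorphism $\mathcal{R}\cong\text{Sym}^1(T)=T$ sending the representation-theoretic Lagrangians $L_\alpha,L_\beta$ to the Heegaard curves $\alpha,\beta$ (with appropriate treatment of the orbifold points of $\mathcal{R}$ corresponding to the basepoints). Once this identification is in hand, both $HF(L_\alpha,L_\beta)$ and $HF(\alpha,\beta)$ are computed from exactly the same data on $T$, and we conclude the ranks are equal after tensoring with $\C$.

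The main obstacle will be checking that Smith's identification truly sends the Wehrheim--Woodward Lagrangians to the Heegaard $\alpha,\beta$ curves, rather than to some Hamiltonian isotopic but unrelated Lagrangians, and that marked points on both sides correspond correctly: the puncture/central-holonomy data on the representation side must match up with the basepoint data $(w,z)$ on the symmetric-product side. Handling the orbifold singularities of $\mathcal{R}$ and the fact that Smith's symplectomorphism is only defined on the smooth locus (together with transversality of $L_\alpha,L_\beta$ away from these loci) will require care, but since we are only after an equality of ranks over $\C$, any finite discrepancies near the singular fibers can be absorbed into explicit local models.
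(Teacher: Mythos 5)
Your proposal goes off track on the central points: it misidentifies the symplectic manifold underlying $\mathcal{S}(K)$, and it mischaracterizes what Smith's theorem says.

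On the first point, the Wehrheim--Woodward Floer field theory applied to the knot closure $M_K$ does not produce a once-punctured-torus representation variety. Rather, a doubly-pointed genus-$g$ Heegaard surface for $K$ gives, after removing two disks and attaching a handle to form the \emph{compression Heegaard surface} $\Sigma_{g+1}\subset M_K$ (Proposition \ref{heegaardequal}), a splitting of $M_K$ into two compression bodies from $T^2$ to $\Sigma_{g+1}$. For a $(1,1)$ knot this is a \emph{closed} genus-$2$ surface $\Sigma_2$, whose twisted representation variety $\mathcal{R}_2$ is smooth, compact, simply connected, and $6$-dimensional; the Lagrangians $\L(\{\alpha\}),\L(\{\beta\})$ coming from the Heegaard curves are $3$-dimensional submanifolds. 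Theorem \ref{skab} gives $\mathcal{S}(K)\cong HF(\L(\{\alpha\}),\L(\{\beta\}))$. Your $2$-dimensional ``orbifold'' $\mathcal{R}$ with non-central holonomy around a puncture is a different object from a different theory (closer to traceless-character-variety pillowcase pictures), and with the central holonomy $-\mathbb{I}$ that the Wehrheim--Woodward framework actually fixes, the punctured-torus moduli space is a single point (Theorem \ref{modsu2torus}), not a surface.

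On the second point, Smith's theorem \cite{smith2010floer} is not a symplectomorphism $\mathcal{R}\cong \opn{Sym}^1(T)=T$, birational or otherwise; dimensionally this cannot even be what is used, since $\Sigma_2$ is $2$-dimensional while $\mathcal{R}_2$ is $6$-dimensional. Smith proves a $\C$-linear equivalence of split-closed derived Fukaya categories $D^\pi\mathcal{F}(\Sigma_2)\cong D^\pi\mathcal{F}(\mathcal{R}_2;0)$ carrying the object $[\alpha]$ associated to a simple closed curve $\alpha\subset\Sigma_2$ to $[\L(\{\alpha\})]$. That categorical equivalence, not a geometric identification, is what matches $HF(\alpha,\beta)$ on $\Sigma_2$ (which computes $\rk\HFK(K)$ via the Abouzaid/Ozsv\'ath--Szab\'o geometric-intersection-number identification) with $HF(\L(\{\alpha\}),\L(\{\beta\}))$ in $\mathcal{R}_2$. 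Your concerns about orbifold loci, singular fibers, and marked-point bookkeeping are artifacts of the wrong setup; in the rank $2$, degree $1$ Wehrheim--Woodward theory everything is smooth, monotone, and simply connected, and there are no such issues to absorb.
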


The set of (1,1) knots includes all torus knots, for example. Theorem \ref{main1intro} would appear to be one of the first results indicating that, as folklore would suggest, Heegaard Floer homology groups contain information about $SU(2)$ (or $U(2)$) representations (generally speaking, most work on the Seiberg-Witten = Donaldson philosophy has focused on dimension 4, although many of those techniques could probably be extended to the corresponding 3-manifold invariants).

In fact, we also make the general conjecture:

\begin{conj}The ranks of $\mc{S}(K)\otimes \C$ and $\HFK(K)\otimes \C$ are equal for all knots $K\subset S^3$. 
\end{conj}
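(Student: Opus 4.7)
The plan is to extend the strategy behind Theorem \ref{main1intro} from $(1,1)$ knots to all knots in $S^3$, by working with bridge presentations of arbitrary index. Any knot $K$ admits an $(n,n)$-bridge presentation for some $n\geq 1$, yielding a genus-$n$ Heegaard splitting of $S^3$ in which $K$ sits as a union of $2n$ arcs meeting the splitting surface transversely in $2n$ points. In this framework, both $\HFK(K)\otimes\C$ and $\mc{S}(K)\otimes\C$ should arise as Lagrangian Floer homology groups of tuples of Lagrangians obtained from the $\alpha$- and $\beta$-handlebodies, sitting respectively inside $\Sym^{n-1}(\Sigma)$ and $\mt(\Sigma)$, where $\Sigma$ is the Heegaard surface punctured at the $2n$ points of $K\cap\Sigma$. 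The $(1,1)$ case corresponds to $n=1$, where Smith's theorem supplies a diffeomorphism of these two symplectic manifolds identifying the corresponding Lagrangian tuples.

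First I would attempt to construct, for each $n$, a symplectic correspondence
$$\mc{C}_n\subset\Sym^{n-1}(\Sigma)\times\mt(\Sigma)$$
which restricts, under stabilization of a $(1,1)$ presentation to an $(n,n)$ presentation, to Smith's identification on each factor. Using the Wehrheim--Woodward quilted Floer framework which underlies the definition of $\mc{S}(K)$, such a correspondence would allow one to compare the two Floer complexes via a quilted Floer cohomology calculation. In particular, one should verify that the handlebody Lagrangians on each side are matched, either exactly or up to Hamiltonian isotopy, by geometric composition with $\mc{C}_n$; this would reduce the equality of ranks over $\C$ to the existence of $\mc{C}_n$ together with the usual transversality and compactness inputs in the quilted setting.

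The main obstacle will be the construction of $\mc{C}_n$ itself. In higher genus, $SU(2)$ and $U(2)$ representation varieties are substantially more intricate than in the genus-one case, and unlike $\Sym^{n-1}(\Sigma)$ they are not birationally simple, so there is no obvious geometric candidate. A reasonable attempt is to build $\mc{C}_n$ inductively via degenerations of $\Sigma$ along a pants decomposition, applying Smith-type identifications on the $n=1$ pieces and then gluing via the fibered structure of the representation variety studied by Hitchin and others.

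Failing the geometric approach, I would pursue a skein-theoretic alternative: prove an unoriented skein exact triangle for $\mc{S}(K)\otimes\C$, mirroring the known triangle for $\HFK$ in the $\delta$-graded setting, compute both ranks directly on the unknot to obtain the base case, and induct on crossing number. The key technical input would be a surgery exact triangle for the Wehrheim--Woodward quilted invariants underlying $\mc{S}(K)$; such triangles are expected in every Floer-theoretic framework, but establishing one rigorously in the quilted symplectic setting may demand significant new analytic work.
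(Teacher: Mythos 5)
This statement is labeled as a \textbf{conjecture} in the paper, and the paper offers no proof of it; Theorem~\ref{main1intro} establishes only the special case of $(1,1)$ knots in $S^3$, leaning entirely on Smith's genus-$2$ equivalence $D^\pi\mc{F}(\Sigma_2)\cong D^\pi\mc{F}(\mc{R}_2;0)$, and the authors explicitly leave the general case open. Your ``proposal'' is therefore not being measured against an existing argument, and, as you yourself acknowledge at two separate points, what you have written is a research program rather than a proof: the existence of the correspondence $\mc{C}_n$ is unestablished, and the quilted surgery exact triangle in your fall-back approach is likewise conjectural. Neither plan, as written, closes the statement.

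A few substantive issues with the setup you sketch. First, the paper's construction for $\HFK$ uses a genus-$g$ \emph{doubly}-pointed Heegaard diagram, i.e.\ a $(g,1)$ presentation, in which $K$ meets the Heegaard surface in exactly two points; the corresponding Lagrangians live in $\Sym^{g}(\Sigma)$, not in $\Sym^{n-1}$ of a $2n$-punctured surface. Your ``$(n,n)$-bridge'' framing mixes bridge number and splitting genus and shifts the symmetric-product index, which matters because the whole point of the genus-one case is that $\Sym^1(\Sigma_2)=\Sigma_2$, making Smith's theorem directly applicable. For $g\geq 2$ one would need an equivalence between (a summand of) $\mc{F}(\Sym^{g}(\Sigma))$ and a Fukaya category of a higher-rank moduli space $\mc{R}_{g+1}$, and no analogue of Smith's theorem is currently known there; this is precisely why the paper stops at $(1,1)$ knots. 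Second, even granting a candidate correspondence $\mc{C}_n$, you would need it to be a monotone Lagrangian correspondence of minimal Maslov number at least three with the appropriate relative spin data to fit into the Wehrheim--Woodward $\opn{Symp}^{\#}_\tau$ framework the paper relies on, and you would need geometric composition with the handlebody Lagrangians to be transverse and embedded; none of these are checked or even plausibly arranged by the pants-decomposition degeneration you allude to. The skein-triangle alternative has the same status: a surgery exact triangle for $\mc{S}(K)$ in the quilted setting is itself an open problem, not a usable lemma.

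So the correct assessment is that the statement remains, in the paper and after your attempt, a conjecture. What you have contributed is a reasonable list of directions in which one might try to prove it, with the central difficulties honestly flagged; but stating the difficulties is not the same as overcoming them.
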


Our second result relates invariants from Heegaard Floer homology to another TQFT, namely, Chern-Simons theory, in the form of the Jones polynomial. In particular, we prove the following oddity, a purely topological definition of the Jones polynomial $J(K)$ for 2-bridge knots $K\subset S^3$. Let $\opn{Spin}^c(K)$ denote the set of $\opn{Spin}^c$ structures on the double-branched cover of $K$. For each $\mathfrak{s} \in \opn{Spin}^c(K)$, we define a topological invariant $I(\mathfrak{s})\in \Z$ in terms of Heegaard Floer homology $d$-invariants and Atiyah-Patodi-Singer $\rho$-invariants (alternatively, Casson-Gordon invariants). Let $\sigma(K)$ denote the knot signature. Then, we have (note that there is a bit more notation left undefined, see Chapter 3):

\begin{theorem}[main theorem, chapter 3]\label{main2intro} 
$$i^{-\sigma(K)}q^{3\sigma(K)}J(K)= \sum_{o\in\mathcal{O}}(iq)^{2\sigma(K^{o})} +\left(q^{-1}-q^{1}\right)\sum_{\mfs\in\opn{Spin}^c(K}(iq)^{I(\mfs)}$$
\end{theorem}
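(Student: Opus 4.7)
The plan is to expand the Jones polynomial of $K = K(p,q)$ combinatorially from its standard alternating $4$-plat diagram via the Kauffman bracket, and to match the resulting state sum term-by-term with the right-hand side, using explicit closed forms for both the Heegaard Floer $d$-invariants (Ozsv\'ath--Szab\'o) and the Atiyah--Patodi--Singer $\rho$-invariants of the lens space $L(p,q)$.

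First I would enumerate the Kauffman states of the $4$-plat diagram associated to the continued fraction expansion of $p/q$. A key step is to exhibit a natural bijection between the ``interior'' states and $\opn{Spin}^c(L(p,q)) \cong \Z/p\Z$; this should come from the fact that each state determines a spanning surface of $K$ whose double cover is a smooth $4$-manifold bounding $L(p,q)$, and whose characteristic class picks out a distinguished $\opn{Spin}^c$-structure. After normalizing by $i^{-\sigma(K)}q^{3\sigma(K)}$, each state contributes a monomial $(iq)^{N(\text{state})}$, and I expect the sum to decompose as a bulk contribution over $\Z/p\Z$ together with a small number of ``extremal'' states (the all-$A$ and all-$B$ resolutions and their relatives) that produce the orientation sum $\sum_{o\in\mathcal{O}}(iq)^{2\sigma(K^{o})}$, matched using the Gordon--Litherland formula applied to the checkerboard surface.

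Next I would identify the bulk exponent $N(\mfs)$ with $I_{p,q}(\mfs)$. This is a combinatorial Dedekind-sum identity: the $d$-invariants of $L(p,q)$ admit a closed form as a difference of Dedekind sums, as do the lens-space $\rho$-invariants, while the Kauffman state weight is itself a lattice-point count determined by the continued fraction of $p/q$. The prefactor $(q^{-1}-q)$ appears naturally because each interior state carries one extra Kauffman circle relative to the extremal states, contributing a factor of $(-A^{2}-A^{-2})$ that becomes $(q^{-1}-q)$ after the standard substitution $A = iq^{-1/2}$ relating the bracket to the Jones polynomial and absorbing the sign from $i^{-\sigma(K)}$.

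The hard part will be pinning down the affine shifts and the precise Dedekind identity matching $N(\mfs)$ with $I_{p,q}(\mfs)$. Both the $d$- and $\rho$-invariants are convention-dependent (depending on the $\opn{Spin}^c$-normalization and the choice of bounding $4$-manifold for $\rho$), so the proof requires careful bookkeeping of the bijection between Kauffman states and $\Z/p\Z$, of the $\sigma(K)$-correction arising from Wall non-additivity of the signature (equivalently, the Casson--Gordon correction formula), and of the signs absorbed into $i^{-\sigma(K)}$. The underlying identity itself should follow from Dedekind reciprocity applied to the continued fraction expansion of $p/q$, which is the common source of the three ingredients: the Kauffman state weights, $d(L(p,q), \mfs)$, and $\rho(L(p,q), \mfs)$.
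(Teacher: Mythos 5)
Your proposal takes a genuinely different route from the paper, and as sketched it has gaps that would be substantial to close.

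The paper does not work from the Kauffman bracket state sum at all. Instead it uses Lee's structure theorem for the Khovanov homology of alternating links: $Kh(K,\Q)$ decomposes as $\bigoplus_{o\in\mathcal{O}}E(2\sigma(K^{o})-3\sigma(K))\oplus\bigoplus_{i} K(d_{i})$, where the $E$ and $K$ pieces are explicit two-dimensional bigraded summands. This packages all the grading information of $Kh(K)$ into a finite set of integers $M(K)$, so that $J(K)$ is completely determined by $M(K)$ together with $\sigma(K)$. The paper then establishes a \emph{skein recursion} for $M(K)$ under resolving a crossing in the canonical $4$-plat diagram, and separately shows that $I(\mathfrak{s}) = 8d(\mathfrak{s}) - \rho(\iota(\mathfrak{s}))$ satisfies the same skein recursion. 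The latter step is the real work: it combines the Ozsv\'ath--Szab\'o recursive formula for $d$-invariants of lens spaces with a direct lattice-point-count comparison lemma (showing $\rho(p,r,n)-\rho(q,s,n)=-2n^{2}/pq$, possibly up to a correction when $|n|=q$), which is a local, inductive version of the Dedekind identity you invoke. The identity and the base case then pin down the map $\iota$ as a byproduct of the induction, rather than requiring it up front.

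Your approach instead tries to match the two sides in one shot, and the two central ingredients you need are not established. First, the bijection between ``interior'' Kauffman states and $\opn{Spin}^c(L(p,q))\cong\Z/p\Z$: the bracket state sum has $2^{c}$ states while $\abs{\opn{Spin}^c}=p=\det(K)$, and while for alternating diagrams the surviving terms are related to spanning trees of a Tait graph (of which there are indeed $\det(K)$), an individual Kauffman state does not determine a spanning surface of $K$ in the way you describe; each diagram has only two checkerboard surfaces, and the passage from states to a $\Z/p\Z$-indexed set already amounts to re-proving the content of Lee's theorem. Second, the claim that ``each interior state carries one extra Kauffman circle'' producing the uniform factor $(q^{-1}-q)$ is not true at the level of raw Kauffman states: different states contribute different powers of $(-A^{2}-A^{-2})$. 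The factor $(q^{-1}-q)$ in the statement comes from the fixed shape of the $K(a)$ summands in Lee's theorem (two $\Q$'s in adjacent $i$-gradings with a $j$-gap of $4$), not from a circle count. So while your Dedekind-reciprocity intuition about what ultimately underlies the identity is sound, the state-sum decomposition you propose is the hard part, and the paper avoids it entirely by working one skein move at a time.
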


Our final result concerns the relationship between the Heegaard Floer homology groups $\widehat{HF}(M)$ of closed 3-manifolds $M$, and their algebraic topology, in particular, their fundamental group. The full nature of this relationship has been an elusive and intriguing question. In joint work with Adam Levine, we prove that if $\widehat{CF}(M)$ is particularly simple, i.e., $M$ is what we call a ``strong $L$-space,'' then we can deduce an interesting property of $\pi_1(M)$, namely, that it admits no left-ordering which is invariant under group multiplication: 

\begin{theorem}[main theorem, chapter 4]\label{main4intro}
If $M$ is a strong $L$-space, then $\pi_1(M)$ is not left-orderable.
\end{theorem}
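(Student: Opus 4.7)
The strategy is to turn the Heegaard-diagrammatic meaning of the strong $L$-space condition directly into a group-theoretic obstruction on $\pi_1(M)$. The bridge is the standard presentation of $\pi_1$ coming from a Heegaard decomposition.

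First, I would fix a Heegaard diagram $(\Sigma,\boldsymbol\alpha,\boldsymbol\beta)$ witnessing that $M$ is a strong $L$-space, meaning the generators $\mathbf{x}\in\T_\alpha\cap\T_\beta$ of $\CF(M)$ are already in bijection with $H_1(M;\Z)$ before taking homology (equivalently, the differential vanishes for purely combinatorial reasons, with distinct generators in distinct $\spc$-structures). From such a diagram one reads off the familiar presentation
\[
\pi_1(M)\;=\;\langle x_1,\dots,x_g \mid r_1,\dots,r_g\rangle,
\]
where each $x_i$ is the meridional class of $\alpha_i$ in the complementary handlebody, and $r_j$ is the word in the $x_i^{\pm1}$ obtained by traversing $\beta_j$ and recording each signed intersection with an $\alpha$-curve.

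Next I would translate the strong $L$-space hypothesis into a combinatorial constraint on the words $r_j$. The point is that the minimality of the Heegaard Floer generator count, together with the requirement that distinct intersection points $\mathbf{x}$ and $\mathbf{y}$ necessarily lie in different $\spc$-structures, rules out the existence of any pair of signed intersections on a single $\beta_j$ that could combinatorially cancel. Concretely, I expect to show that the diagram can be arranged so that every $\beta_j$ crosses every $\alpha_i$ with a consistent sign pattern — i.e., each relator $r_j$, after cyclic rewriting, breaks into subwords whose $x_i$-exponents are all of the same sign. This is the step where the word "strong" does all the work.

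Finally I would invoke the standard criterion: a group generated by $x_1,\dots,x_g$ is left-orderable if and only if there exists a sign vector $\epsilon\in\{\pm 1\}^g$ such that no nonempty positive word in the letters $x_i^{\epsilon_i}$ represents the identity. Using the sign structure of the $r_j$ extracted in the previous step, I would argue that for every choice of $\epsilon$ some relator $r_j$ becomes (a cyclic conjugate of) a nonempty positive word in the $x_i^{\epsilon_i}$ equal to $1$, ruling out any left-ordering.

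The main obstacle is the middle step: the strong $L$-space hypothesis is a statement about intersection points on $\Sigma$ and their $\spc$-classes, while left-orderability is a global algebraic property of the presentation. Turning the former into the rigid sign structure on the relators needed by the third step is where the real work lies, and will likely require an explicit normal form for strong $L$-space Heegaard diagrams plus a careful tracking of how an intersection's sign contributes to the corresponding letter in $r_j$.
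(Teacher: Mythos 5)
Your overall strategy is the right one and matches the paper: read off the Heegaard-splitting presentation of $\pi_1(M)$, encode the strong $L$-space hypothesis as a sign constraint on the relators, and then rule out any left-ordering by exhibiting, for each putative sign vector of the generators, a relator that would be forced to be strictly positive or strictly negative. But there are two genuine gaps in the middle, and they are precisely where the real content of the argument lies.

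First, the sign constraint you extract is too weak and the work you assign to it is misplaced. The strong $L$-space hypothesis (equivalently, all $\mathbf{x}\in\T_\alpha\cap\T_\beta$ having the same local intersection sign $\eta(\mathbf{x})$) does force all intersection points of a fixed pair $(\alpha_i,\beta_j)$ contributing to generators to have the same sign; this is a one-line observation (swap one coordinate of a generator). But it does \emph{not} force each relator to be a monotone word: in a strong diagram it is perfectly possible, already in genus $2$, for a single $\beta_j$ to cross $\alpha_1$ positively and $\alpha_2$ negatively, since the relation $\eta(\mathbf{x})=\sign(\sigma)\prod_i\eta(x_i)$ lets minus signs cancel against $\sign(\sigma)$. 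So ``each relator breaks into subwords of a single sign'' is either the trivial per-generator statement, or it is false. The actual constraint one gets is global: when you form the $g\times g$ sign matrix $E=(\epsilon_{i,j})$ recording the sign with which $x_i$ appears in $r_j$, the hypothesis says that every nonzero term $\sign(\sigma)\,\epsilon_{1,\sigma(1)}\cdots\epsilon_{g,\sigma(g)}$ of the formal determinant of $E$ is defined (no mixed-sign entries among those used) and has the \emph{same} sign. This ``symbolic determinant has only one sign'' condition is the correct translation of the strong $L$-space hypothesis, and you do not state it.

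Second, and more importantly, passing from that determinant condition to the desired conclusion -- that for every assignment of signs $d_i\in\{0,+,-\}$ to the generators some relator's column becomes single-signed -- is a nontrivial combinatorial lemma, and it is the heart of the proof. The paper proves it by contradiction: assuming no such column exists, one selects for each diagonal index an off-diagonal entry of the opposite sign, chases these choices around until they close up into a cycle, and then compares the resulting permutation's determinant term with the identity's to produce two terms of opposite sign, contradicting the determinant condition. Without this step the argument does not close, and per-generator sign consistency alone is consistent with presentations of left-orderable groups (e.g.\ a $2\times 2$ sign matrix with rows $(+,-)$ and $(-,+)$ presents $\Z$). A minor additional point: your criterion for non-left-orderability should allow a generator to equal the identity (i.e.\ the sign vector should take values in $\{0,+,-\}$, not $\{\pm 1\}$), and as you state it it is a necessary condition, not an ``if and only if.''
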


\chapter{Symplectic instanton invariants and (1,1) knots}\label{chpt1}

In this chapter, we present our results on symplectic instanton knot homology. In the main section, $\S \ref{chpt2mainsection}$, we define the homological knot invariant $\mc{S}(K)$, and prove that it has the same $\C$-rank as $\widehat{HFK}(K)$ for (1,1) knots in $S^3$. The construction of $\mc{S}(K)$ follows directly from the recent and extensive work of Wehrheim and Woodward on quilted Floer homology and in particular their ``Floer field theory,'' and in the first three sections of this chapter, we give a rapid exposition of this work. Most of this exposition is taken directly from the paper \cite{wehrheim2008floerA}, and its updated version \cite{floerfieldtheoryNew}. The interested reader is referred to these papers, and references therein, for the complete story. 

\section{Symplectic topology and the symplectic category}\label{symptop}
We recall the basic definitions:

\begin{defn}[symplectic manifold] A \textbf{symplectic manifold} is a pair $(M,\omega)$ consisting of a smooth, oriented, $2n$-dimensional manifold $M$ and a two-form $\omega\in\Omega^2(M,\R)$
such that 
\begin{enumerate}
\item $\omega$ is closed: d$\omega$ = 0.
\item $\omega$ is pointwise non-degenerate (as a bilinear form on the tangent bundle of $M$).
\end{enumerate}
In most cases, the \textbf{symplectic form} $\omega$ will be left out of the notation, and we will refer to ``the symplectic manifold $M$.''
\end{defn}

By definition, $J\in\operatorname{End}(T^*M)$ is an $\omega$-compatible complex structure on $M$ if $J^2=-\text{Id}$ and $\omega(\cdot,J\cdot)$ is symmetric and positive definite. Let $\mc{J}(M,\omega)$ denote the space of compatible almost complex structures on $(M,\omega)$.  Any $J \in \mc{J}(M,\omega)$ gives rise to a complex structure on the tangent bundle $TM$; the first Chern class $c_1(TM) \in H^2(M,\Z)$ is independent of the choice of $J$.  

\begin{defn}[monotone symplectic manifold]\label{monotone} A symplectic manifold $(M,\omega)$ is \textbf{monotone} if there exists $\tau \geq 0 \in \R$ such that $$ [\omega] = \tau c_1(TM)$$
\end{defn}

\begin{defn}[minimal Chern number] The \textbf{minimal Chern number} $N_M \in \Z$ of $(M,\omega)$ is the non-negative generator of the image of the index map $c_1:\pi_2(M)\to \Z$, given by
$$ u\in \pi_2(M) \mapsto (c_1(TM),u_*[S^2]) \in \Z$$
\end{defn}

\begin{defn}[Lagrangian submanifold] A smooth, oriented, half-dimensional submanifold $L$ of a symplectic manifold $M$ is said to be a \textbf{Lagrangian submanifold}, sometimes simply called a \textbf{Lagrangian}, if $\omega$ vanishes identically when restricted to the tangent bundle of $L$.
\end{defn}

In addition to the index map for $M$, there are two maps associated to a Lagrangian submanifold $L\subset M$, the Maslov index and the action (i.e. symplectic area) maps

$$ I: \pi_2(M,L) \to \Z, \qquad A: \ \pi_2(M,L) \to \R . $$ 
which we will not define (the interested reader can see \cite{oh1993floer} for this and many more details on Lagrangians, and Lagrangian Floer homology). 

We then have 
\begin{defn}[monotone Lagrangian] a Lagrangian submanifold $L\subset M$ is \textbf{monotone} if
$$ 2 A(u) = \tau I(u) \quad \forall u \in \pi_2(M,L) $$
where the $\tau\geq 0$ is (necessarily) that from Definition \ref{monotone}.
\end{defn}

Furthermore, in analogy with the minimal Chern number, one uses the Maslov index map $I$ to define the \textbf{minimal Maslov number} of a Lagrangian $L\subset M$. 

A generalization of a Lagrangian submanifold is a Lagrangian correspondence:

\begin{defn}[Lagrangian correspondence] Let $(M_0,\omega_0)$ and $(M_1, \omega_1)$ denote two symplectic manifolds, and let $\overline{M_0}$ denote $M_0$ with its orientation reversed. A \textbf{Lagrangian correspondence} $L$ from $M_0$ to $M_1$ is a Lagrangian submanifold $L\subset (\overline{M_0}\times M_1 ,-\omega_0\oplus \omega_1)$. 
\end{defn}

Lagrangian correspondences are a simultaneous generalization of Lagrangians and symplectomorphisms: in the first case we take $M_0 = pt$, $M_1 = M$, and in the second case we take $L$ to be the graph of the symplectomorphism. Another natural Lagrangian is the diagonal $\Delta_M\subset \overline{M}\times M$.  Note that strictly speaking, every Lagrangian correspondence is also a Lagrangian submanifold, in the product -- in particular, this directly generalizes the notion of monotonicity to correspondences. 

Lagrangian correspondences play the role of generalized maps between symplectic manifolds. One can define a geometric composition for correspondences, but the result will be another smooth Lagrangian correspondence only in sufficiently nice cases:

\begin{defn}[geometric composition] The \textbf{geometric composition} of Lagrangian correspondences $L_{01} \subset M_0^{-}\times M_1$ and $L_{12}\subset M_1^{-}\times M_2$ is the point set
$$L_{01}\circ L_{12} := \pi_{M_0\times M_2} \left( ( L_{01}\times L_{12}) \cap (M_0\times \Delta_{M_1} \times M_2)\right) \subset M_0 \times M_2 $$
It is called \textbf{transverse} if the intersection is transverse (and hence smooth) and \textbf{embedded} if the projection $\pi_{M_0\times M_2}$ is an embedding of the smooth intersection; if a composition is transverse and embedded, then it results in a smooth Lagrangian correspondence $L_{01}\circ L_{12}\subset M_0^-\times M_2$.
\end{defn}

\begin{defn}[generalized Lagrangian correspondence]
A \textbf{generalized Lagrangian correspondence} $\mathbf{L}$ from $M$ to $N$ consists of a finite sequence of symplectic manifolds $\{M_1,\dots,M_k\}$, with $M_1=M$ and $M_k=N$, and a finite sequence $\ul{L} = \{L_{1,2},\dots,L_{k-1,k}\}$, such that $L_{i,i+1}$ is a Lagrangian correspondence from $M_i$ to $M_{i+1}$.
\end{defn}

The \textbf{algebraic composition} of generalized Lagrangian correspondences $\ul{L}$ and $\ul{L}'$ is given by concatenation $\ul{L}\#\ul{L}' = \left(L_1,\dots,L_m,L_{1}',\dots,L_{m}'\right)$. 

Our main use of Lagrangian correspondences is to define a symplectic category. Because most correspondences are not composable, morphisms will take the form of formal series of Lagrangian correspondences, modulo geometric composition where it is well defined:

\begin{defn}[Symplectic category, $\text{Symp}^{\#}$] 
The objects of \textbf{$\text{Symp}^{\#}$} are smooth symplectic manifolds. The morphisms $\operatorname{Hom}(M_-,M_+)$ of \textbf{$\text{Symp}^{\#}$} are generalized Lagrangian correspondences from $M_-$ to $M_+$ modulo the composition equivalence relation $\sim$ generated by
$$\left(\dots,L_{(j-1)j},L_{j(j+1)},\dots\right)\sim\left(\dots,L_{(j-1)j}\circ L_{j(j+1)},\dots \right) $$ for all 
sequences and $j$ such that $L_{(j-1)j} \circ L_{j(j+1)}$ is transverse and embedded. The composition of morphisms $[\underline{L}] \in \operatorname{Hom}(M,M')$ and $[\underline{L}'] \in \operatorname{Hom}(M',M'')$ is defined by
$$[\underline{L}] \circ [\underline{L}'] := [\underline{L} \# \underline{L}'] \in \operatorname{Hom}(M,M'')$$
The identity $1_M \in \operatorname{Hom}(M,M)$ is the equivalence class $1_M := [\Delta_M]$ of the diagonal $\Delta_M \subset M^- \times M$. 
\end{defn}

Technically, we will use a slightly more elaborate symplectic category for the results in this thesis, which incorporates relative spin structures and monotonicity.

\begin{defn}[relative spin structure]
A \textbf{relative spin structure} on a bundle $E \to M$ with respect to a map $M \to N$ is a relative trivialization of the second Stiefel-Whitney class $w_2(E) \in H^2(M,\Z_2)$. (In particular, $E$ is relatively spinable if and only if $w_2(E)$ lies in the image of $H^2(N,\Z_2)\to H^2(M,\Z_2)$). 
\end{defn}

Then, the symplectic category which we will need is:

\begin{defn}[monotone symplectic category, $\operatorname{Symp}^{\#}_{\tau}$]
The \textbf{monotone symplectic category} $\operatorname{Symp}^{\#}_{\tau}$ denotes the category with monotone symplectic manifolds with monotonicity constant $\tau$ as objects, and equivalence classes of generalized Lagrangian correspondences with relative spin structures, with minimal Maslov number at least three, as morphisms. (Note that the empty set is allowed as an an object of $\operatorname{Symp}^{\#}_{\tau}$). 
\end{defn}

\section{Lagrangian Floer homology, quilted Floer homology, and the categorification functor}

To define Lagrangian Floer homology, suppose we have two Lagrangians $L_1,L_2\subset M$. Lagrangian Floer homology is a $\Z/2\Z$-graded abelian group associated to this pair, written $HF(L_1,L_2)$. The definition of Floer homology is both elegant and revolutionary, but is also involved, and we will not need a detailed exposition for any of the results in this thesis. An interested reader should turn to \cite{wehrheim2010functoriality} and \cite{oh1993floer} for details. In brief, if $L_1$ and $L_2$ are compact and have transverse intersection, then $L_1\cap L_2$ consists of a finite set of points. In this situation, we can give a preliminary description of Lagrangian Floer homology, as follows:

\begin{defn}[Lagrangian Floer homology, preliminary]
If $L_1,L_2\subset M$ are compact Lagrangians in $M$ with transverse intersection, then the \textbf{Lagrangian Floer homology} $HF(L_1,L_2)$ is the homology of a chain complex $CF(L_1,L_2)$. As an abelian group, $CF(L_1,L_2)$ is generated by a distinguished basis $[x_i]$, where $\{x_i\} = L_1\cap L_2$ denotes the set of intersection points between $L_1$ and $L_2$. The $\Z/2\Z$ grading arises from the function $\{x_i\}\to\{\pm 1\}$ which maps each intersection point to its sign. The differential $\partial: CF(L_1,L_2)\to CF(L_1,L_2)$ is the $\Z$-linear map defined in terms of this basis by associating certain integers $n(x_i,x_j)$ to pairs of intersection points, and defining $\partial([x_i]) = n(x_i,x_j) [x_j]$.
\end{defn}

The integer $n(x_i,x_j)$ is a signed count of certain disks $\mathbb{D} \to M$ with boundary on $L_1$ and $L_2$, which ``cancel'' $x_i$ and $x_j$, in the sense of a Whitney move. The disks which contribute to $n(x_i,x_j)$ are essentially those to which $\omega$ restricts as an area form. Furthermore, the analysis necessary to proving that $HF(L_1,L_2)$ is well-defined famously requires the choice of an $\omega$-compatible almost-complex structure $J$ on $M$; from this point of view, the relevant disks are those which are holomorphic with respect to $J$. In other words, there is a map $u:\mathbb{D}\to M$, parameterizing the disk in $M$, whose differential $du$ intertwines the (differential of) complex multiplication by $i$ on $\mathbb{D}$ with multiplication by $J$ on $T^*M$.

Note that if we move a Lagrangian submanifold by a special subclass of isotopies which preserve $\omega$, called \textbf{Hamiltonian isotopies}, then the traces of arcs in the Lagrangian under the isotopy will be $J$-holomorphic, for appropriate $J$. In this sense, $HF(L_1,L_2)$ serves as an algebraic device for capturing ``symplectically un-cancellable'' intersections of $L_1$ and $L_2$. Indeed, the group $HF(L_1,L_2)$ gives a strict generalization of the algebraic intersection number $I(L_1,L_2) = ([L_1]\cup [L_2])([M])$ (the ``algebraically un-cancellable intersections), since the algebraic intersection number is given by the Euler characteristic of Floer homology, $$I(L_1,L_2) = \chi(HF(L_1,L_2))$$

In the rest of this section, we present a very rapid overview of the relevant definitions and results concerning quilted Floer homology. For the details, see \cite[$\S 4$]{floerfieldtheoryNew}, and references therein.

Let $M$ be a $\tau$-monotone symplectic manifold, as defined in $\S$\ref{symptop}.

\begin{defn}[generalized Lagrangian manifold]
A \textbf{generalized Lagrangian submanifold} of $M$ is a generalized Lagrangian correspondence from a point to $M$, that is, a sequence $L_{-s(-s+1)},\dots,L_{(-1)0}$ of correspondences from $M_{-s} = \opn{pt}$ to $M_0 = M$. We say that a generalized Lagrangian correspondence satisfies a certain property (simply-connected, compact, etc.) if each correspondence in the sequence satisfies that property.
\end{defn}

Using their holomorphic quilt technology, Wehrheim and Woodward generalize the definition of Lagrangian Floer homology to define the \textbf{quilted Floer homology} of two generalized Lagrangians $\ul{L}_0$ and $\ul{L}_1$, which we will continue to write as $HF(\ul{L}_0,\ul{L}_1)$. These groups serve as the Hom sets for an extended Donaldson-Fukaya category, which we now define:

\begin{defn}[extended Donaldson-Fukaya category] $\opn{Don}^{\#}(M)$, the \textbf{extended Donaldson-Fukaya category}, is the category whose
\begin{enumerate}
\item objects are compact, oriented, simply-connected generalized Lagrangian submanifolds of $M$
\item morphisms from an object $\ul{L}_0$ to an object $\ul{L}_1$ are quilted Floer homology classes:
$$\opn{Hom}(\ul{L}_0,\ul{L}_1) = HF(\ul{L}_0,\ul{L}_1)$$
\item composition and identities are defined by counting holomorphic quilts with strip-like ends and Lagrangian boundary and seam conditions as in \cite{wehrheim2010functoriality} (this is the quilted generalization of relative invariants defined by counting holomorphic strips. See \cite{wehrheim2010functoriality} for an overview). 
\end{enumerate}
\end{defn}

\begin{defn}[Functors for Lagrangian correspondences] Let $M_0,M_1$ be $\tau$-monotone symplectic manifolds. For any compact, oriented, simply-connected spin correspondence $L_{01} \subset M_0^- \times M_1$ the functor $$ \Phi(L_{01}) : \opn{Don}^{\#}(M_0) \to \opn{Don}^{\#}(M_1)$$ is defined on objects by $$\left(L{-s(-s+1)},\dots,L_{(-1)0)} \right)\mapsto \left(L_{-s(-s+1)},\dots,L_{(-1)0}, L_{01}\right)$$ (i.e., algebraic composition). 
On morphisms $\Phi(L_{01})$ is defined by counting holomorphic quilts of the form in \cite[p.37, Figure 4]{floerfieldtheoryNew}, i.e. by counting (quilted) pairs of pants.
\end{defn}

The main result of this section packages together the previous quilted Floer homology constructions to construct a ``categorification functor'' from the symplectic category to the category $\opn{Cat}$ of (small) categories. This ``black-boxes'' all the details, analytic and otherwise, in the Floer homology constructions, and therefore to define a field theory using Floer homology, one only has to focus on the question of which symplectic manifolds and Lagrangian correspondences one would like to use.

\begin{theorem}[categorification functor]\label{catfunc} For any $\tau > 0$, the maps $$M \mapsto \opn{Don}^{\#}(M), ~~ [\ul{L}_{01}]\mapsto [\Phi(\ul{L}_{01})]$$ define a \textbf{categorification functor} $\opn{Don}^\# : \opn{Symp}_\tau\to \opn{Cat}$.
\end{theorem}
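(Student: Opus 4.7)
The plan is to verify the two functor axioms — that $\opn{Don}^\#$ is well-defined on objects and morphisms, and that it respects composition and identities — by assembling the results on quilted Floer homology that have been quoted (or cited) in the previous two sections. Most of the analytical work has already been absorbed into the definitions of $\opn{Don}^\#(M)$ and $\Phi(L_{01})$, so the proof is primarily a bookkeeping exercise, with one genuinely deep input (the Wehrheim--Woodward geometric composition theorem) doing most of the heavy lifting.

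First I would check that $\opn{Don}^\#(M)$ is in fact a category for every $\tau$-monotone $M$: composition of the pair-of-pants counts is associative (by a standard degeneration argument for quilted holomorphic disks with four Lagrangian seams), and the identity on each object $\ul{L}$ is represented by the class in $HF(\ul{L},\ul{L})$ counted by constant quilts — so this is essentially Wehrheim--Woodward. Next, for a single spin, simply-connected, monotone correspondence $L_{01}$, I would verify that $\Phi(L_{01})$ really is a functor $\opn{Don}^\#(M_0)\to\opn{Don}^\#(M_1)$. Well-definedness on objects is automatic from concatenation; on morphisms one has to check that pair-of-pants counts commute with insertion of the new seam labeled by $L_{01}$, which is a gluing/degeneration argument for quilts with one extra strip. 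Identities and composition are handled by the same gluing theorem.

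The critical step is showing that $\opn{Don}^\#$ respects the equivalence relation $\sim$ on morphisms of $\opn{Symp}^\#_\tau$: whenever $L_{(j-1)j}\circ L_{j(j+1)}$ is transverse and embedded, the functors $\Phi\bigl(\dots,L_{(j-1)j},L_{j(j+1)},\dots\bigr)$ and $\Phi\bigl(\dots,L_{(j-1)j}\circ L_{j(j+1)},\dots\bigr)$ must be naturally isomorphic. This is exactly the content of the Wehrheim--Woodward ``strip-shrinking'' / geometric composition theorem: a quilted strip with a thin middle patch labeled by $M_j$, bounded by the two correspondences, degenerates as the width goes to zero to a strip bounded by the composition, inducing a canonical chain isomorphism on quilted Floer complexes. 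Naturality in the pair-of-pants maps follows by the same shrinking argument applied to quilted disks with three ends. I would cite the relevant theorems from \cite{wehrheim2010functoriality} and \cite{floerfieldtheoryNew} rather than reprove them, since this is where all the hard analysis lives.

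Finally I would verify the two functor axioms for $\opn{Don}^\#$ itself. Algebraic composition (concatenation) of generalized correspondences is sent to composition of functors essentially by definition: $\Phi(\ul{L}\#\ul{L}')$ and $\Phi(\ul{L}')\circ\Phi(\ul{L})$ act identically on objects and agree on morphisms because concatenation of the associated quilts is the same operation. The identity morphism $[\Delta_M]\in\opn{Hom}(M,M)$ is sent to the identity functor because attaching a diagonal seam is a geometric composition of the type just handled — strip-shrinking the $\Delta_M$-patch returns the original quilt unchanged — so $\Phi(\Delta_M)$ is naturally isomorphic to $\opn{Id}_{\opn{Don}^\#(M)}$. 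The main obstacle throughout is really just the one black box of geometric composition; once that is invoked, the rest is formal.
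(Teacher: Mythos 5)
The paper does not give a proof of Theorem~\ref{catfunc}: it is quoted from Wehrheim and Woodward as part of the expository buildup (the author explicitly says at the start of Chapter 2 that the first three sections are a ``rapid exposition'' of \cite{wehrheim2008floerA} and \cite{floerfieldtheoryNew}), and the surrounding text treats the categorification functor as a black box to be invoked later. So there is no in-paper proof to compare yours against line by line.

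That said, your sketch is a faithful reconstruction of how Wehrheim and Woodward actually establish the result, and you correctly isolate the one genuinely hard ingredient: respecting the equivalence relation $\sim$ on morphisms of $\operatorname{Symp}^{\#}_{\tau}$ requires the strip-shrinking/geometric-composition theorem, which produces the canonical isomorphism of quilted Floer groups when a composition $L_{(j-1)j}\circ L_{j(j+1)}$ is transverse and embedded. Your treatment of identities via geometric composition with $\Delta_M$ is also the right mechanism; it is worth making explicit in a full write-up that $L\circ\Delta_M$ is \emph{always} transverse and embedded, which is why strip-shrinking applies unconditionally there. The one place you are a bit quick is the claim that $\Phi(\ul{L}\#\ul{L}')$ agrees with the composite of $\Phi(\ul{L})$ and $\Phi(\ul{L}')$ on \emph{morphisms} ``essentially by definition'': on objects this really is concatenation, but on morphisms it requires a quilt gluing/degeneration argument comparing pair-of-pants counts for two different seam decompositions, not a tautology. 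Finally, the monotonicity, minimal Maslov number $\geq 3$, and relative spin hypotheses built into $\operatorname{Symp}^{\#}_{\tau}$ are exactly what make the quilted Floer homologies and the strip-shrinking isomorphism well-defined (compactness and orientations), so they belong explicitly in the chain of citations. None of this changes the verdict: your route is the correct one, and the appropriate move in the context of this paper is what you propose — cite \cite{wehrheim2010functoriality} and \cite{floerfieldtheoryNew} for the analytic core rather than reprove it.
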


\section{Floer field theory, following Wehrheim and Woodward}

\subsection{Decorated cobordism categories}
The field theories defined by Wehrheim and Woodward are invariants of 2 and 3 manifolds, equipped with additional bundle structure. This is formalized in terms of \textbf{decorated cobordism categories}, which we define in this subsection. 
	
Fix an integer $r > 0$, and let $P$ be a principal $U(r)$-bundle over a compact, connected surface $X$. We make the following definitions (the first is just the standard definition of degree):

\begin{defn}[degree]\label{degree}
The \tbf{degree} of $P$ is the integer deg$(P) = (c_1(P),[X]) \in \Z $. 
\end{defn}

\begin{defn}[decorated surface]\label{decoratedsurface}
A \textbf{decorated surface of rank \textit{r} and degree \textit{d}} consists of
\begin{enumerate}
\item a compact, smooth, oriented 2-manifold $X$
\item a principal $U(r)$-bundle $P \to X$ with deg$(P) = d$
\item a connection $\delta$ on det$(P)$
\end{enumerate}
\end{defn}

\begin{defn}[decorated cobordism]\label{decoratedcobordism}
A \tbf{decorated cobordism between decorated surfaces $(X_{\pm},P_{\pm},\delta_{\pm})$ of rank \textit{r} and degree \textit{d}} consists of
\begin{enumerate}
\item a compact connected oriented Riemannian three-manifold $Y$ with partitioned boundary $\partial Y = X_- \cup X_+$
\item a principal $U(r)$-bundle $P \to Y$
\item a constant curvature connection $\delta$ on det$(P)$
\item isomorphisms of the restriction of $(P,\delta)$ to $(\partial Y )_{\pm}$ with $(P_{\pm}, \delta_{\pm})$
\end{enumerate}
\end{defn}

\begin{defn}[(2+1)-dimensional decorated cobordism category, $\dcob{2}^{(r,d)}$]\label{decoratedcobordismcategory} The \textbf{(2+1)-dimensional decorated cobordism category} \textbf{$\dcob{2}^{(r,d)}$} is the category whose objects are connected, rank $r$, degree $d$ decorated surfaces without boundary, and whose morphisms are rank $r$, degree $d$ decorated cobordisms, modulo diffeomorphisms which are the identity on the boundary, and pull back the relevant bundle structure. 
\end{defn}

\subsection{Simple cobordisms, Heegaard splittings, and invariance}\label{handlesection}

In general, a (weak) (\textit{d}+1)-dimensional $\mc{C}$-valued topological field theory (TFT) will be a functor from the (\textit{d}+1)-dimensional cobordism category, possibly decorated by extra structure, into another category $\mc{C}$. In our case, we will only need the (2+1)-dimensional decorated cobordism category from Definition \ref{decoratedcobordismcategory}. (Furthermore, a full (rather than weak) TFT would also include invariants for disconnected $d$-manifolds, and for diffeomorphisms of $d$-manifolds). Therefore, for this thesis, we make the following definition: 

\begin{defn}[weak (\textit{2+1})-dimensional $\mc{C}$-valued topological field theory]\label{weak} For integers $r,d > 0$ and a category $\mc{C}$, a \textbf{weak (\textit{2+1})-dimensional $\mc{C}$-valued topological field theory (TFT) of rank \textit{r} and degree \textit{d}} is a functor from $\dcob{2}^{(r,d)}$ into $\mc{C}$.  
\end{defn}

Thus, a weak TFT will associate functor-valued invariants to any oriented, compact, 3-dimensional cobordism with two (non-empty) boundary components, once appropriate bundle data is chosen. Furthermore, these invariants will be compatible with cutting and gluing of cobordisms, and so one could hope that to define an entire TFT, it might suffice to give its value on a basic ``generating set'' of cobordisms, which would then uniquely determine the remaining theory by composition. There would be a strong constraint on the invariants assigned to the basic pieces, namely, whenever distinct gluings of these pieces yielded the same 3-manifold, the corresponding compositions of functors would have to agree. 

In fact, a version of this strategy is already evident in Ozsv\'ath and Szab\'o's definition of the Heegaard Floer homology $HF(M)$ of a closed, oriented 3-manifold $M$; to define $HF(M)$, one first chooses a particular decomposition of $M$, called a Heegaard splitting. In a Heegaard splitting, $M$ is decomposed into exactly two pieces; furthermore, each piece is required to be a handlebody, which is a particularly simple cobordism with one boundary component. A single 3-manifold $M$ admits many distinct Heegaard splittings, and the details of the construction of $HF(M)$ depend crucially on the choice of splitting. Most of Ozsv\'ath and Szab\'o's original paper \cite{ozsvath2004holomorphicDiskInvariants} defining $HF(M)$ is devoted to proving that their invariant is actually independent of all the choices necessary for its construction, beyond the 3-manifold itself.

To define their invariants, Wehrheim and Woodward introduce a framework which generalizes this Heegaard Floer homology picture, which we will now describe. (Note that we will intentionally omit many of the details, definitions, and proofs, for the sake of brevity, and the interested reader should refer to \cite[\S 2]{wehrheim2008floerA} for a complete discussion). In summary, Wehrheim and Woodward allow arbitrary decompositions of their 3-dimensional cobordisms into pieces which they call simple cobordisms. They then use Cerf theory to derive a general set of conditions which are necessary and sufficient for a ``partial TFT,'' defined only on simple cobordisms, to yield a consistent TFT on all 3-dimensional cobordisms via gluing. Again, as with most of the expository material in this chapter, the majority of the following exposition is taken rather directly from \cite{wehrheim2008floerA}. 

We begin with the general definitions of the relevant cobordisms. Let $X_-,X_+$ be compact, connected, oriented manifolds of dimension $d \geq 1$, and let $Y$ be a compact, oriented cobordism from $X_-$ to $X_+$, i.e., $Y$ is a manifold with boundary of dimension $d + 1$ and $X_+$, respectively $X_-$, is the component of the boundary $\partial Y = X_- \cup X_+$  on which the given orientation agrees, respectively disagrees, with the orientation induced by the orientation on $Y$. 

Wehrheim and Woodward's framework is based around cobordisms equipped with a Morse function $f$, with some extra data and conditions; together these form a Morse datum:

\begin{defn}[Morse datum]\label{morsedatum}
A \textbf{Morse datum} for $Y$ consists of a pair $(f,\underline{b})$ of a Morse function $f: Y \to \R$ and an ordered tuple $\underline{b} = (b_0 < b_1 < \cdots < b_m) \subset R_{m+1}$ such that
\begin{enumerate}
\item $X_- = f^{-1}(b_0)$ and $X_+ = f^{-1}(b_m)$ are the sets of minima, resp. maxima, of $f$,
\item each level set $f^{-1}(b)$ for $b \in \R$ is connected, that is, $f$ has no critical points
of index $0$ or $d+1$,
\item $f$ has distinct values at the (isolated) critical points, i.e. it induces a bijection
$\operatorname{Crit}f \to f(\operatorname{Crit}f)$ between critical points and critical values,
\item $b_1, \dots , b_{m-1} \in \R \backslash f(\operatorname{Crit}f)$ are regular values of $f$ such that each interval
$(b_{i-1},b_i)$ contains at most one critical value of $f$.
\end{enumerate}
\end{defn} 

Note that, given a Morse function $f$ satisfying conditions $1$-$3$ in Definition \ref{morsedatum}, there always exists a choice of $b_1 < \dots < b_{m-1}$ satisfying condition 4.

The distinguished cobordisms in the Wehrheim-Woodward theory are defined in terms of Morse data:

\begin{defn}[simple cobordism]\label{simplecobordism}
We call $Y$ a \textbf{simple cobordism} if it admits a Morse datum $(f,\underline{b})$ where $f$ is a Morse function with at most one critical point (and hence we can choose $\underline{b} = (\operatorname{min}f,\operatorname{max}f)$). 
\end{defn}
\begin{defn}[cylindrical cobordism]\label{cylindricalcobordism}
We call $Y$ a \textbf{cylindrical cobordism} if it admits a Morse datum $(f,\underline{b})$ where $f$ is a Morse function with no critical point (and $\underline{b} = (\operatorname{min}f, \operatorname{max}f))$. 
\end{defn}

Note that if the simple cobordism $Y$ contains no critical point then it is always a cylindrical cobordism; in that case the boundary components $X_-$ and $X_+$ are diffeomorphic to the same manifold $X$, and $Y$ is diffeomorphic to the cylinder $X \times [0,1]$. Otherwise, $Y$ contains a single critical point, with index $k \in \{1,\dots,d\}$, and $X_-$ is obtained from $X_+$ by attaching a handle $S_{k-1}\times B_{d-k}$, via an \textbf{attaching cycle} $S_{k-1}\times S_{d-k} \to X_-$, given by the intersection of the unstable manifold (for some choice of a metric on $Y$) for the unique critical point with $X_{-}$. Conversely, $X_{-}$ can be obtained from $X_+$ by attaching a handle of opposite index to an attaching cycle in $X_+$.

Two additional classes of cobordisms which can be defined in terms of Morse functions are:

\begin{defn}[compression body]\label{compressionbody}
We say that a three-dimensional cobordism $Y$ is a \textbf{compression body} if $Y$ can be obtained from $\partial Y_{-}$ or $\partial Y_{+}$ by adding only 1-handles or adding only 2-handles, that is, $Y$ admits a Morse function with minimum $\partial Y_{-}$, maximum $\partial Y_{+}$, and critical points of all of index 1 or all of index 2. 
\end{defn}

\begin{defn}[handlebody]\label{compressionbody}
We say that a three-dimensional cobordism $Y$ is a \textbf{handlebody} if $Y$ is a compression body such that one of $(\partial Y)_{\pm}$ is empty. The \textbf{genus} of $Y$ is defined to be the number of critical points of a Morse function on $Y$; it follows that $\partial Y$ is a genus $g$ surface $\Sigma_g$. 
\end{defn}

For completeness, we can now define
\begin{defn}[Heegaard splitting, Heegaard surface]\label{heegaard} A \textbf{Heegaard splitting} of an oriented 3-manifold $M$ is a decomposition $M = Y_1 \cup_{\Sigma_g} Y_2$, where $Y_1$ and $Y_2$ are each genus $g$ handlebodies ($g$ is also called the \textbf{genus} of the Heegaard splitting). Furthermore $\Sigma_g = \partial Y_1 = \partial Y_2$ is called the \textbf{Heegaard surface}.
\end{defn}


Since all smooth, compact, oriented manifolds with boundary admit Morse functions, any smooth, compact, oriented cobordism with two non-empty, connected boundary components can be decomposed into a finite sequence of simple cobordisms. To move between different decompositions of the same cobordism, we use the following relationships between sequences of simple cobordisms (i.e., when glued up, each sequence in the pairs listed below have the same diffeomorphism type). In the notation below, $\partial Y_i = X_{i-1}\cup X_{i}$. 

\begin{defn}[critical point cancellation]\label{criticalpointcancellation}
In which two simple cobordisms $Y_i, Y_{i+1}$, which carry critical points of adjacent indices whose attaching cycles (for some choice of a metric) in $X_i$ intersect transversally in a single point, are replaced by the cylindrical cobordism $Y_i\cup_{X_i} Y_{i+1} \cong X_{i-1}\times [b_{i-1}, b_{i+1}]\cong X_{i+1}\times [b_{i-1},b_{i+1}]$
\end{defn}

\begin{defn}[critical point reversal]
In which two simple cobordisms $Y_i, Y_{i+1}$, which carry critical points of index $k$ and $l$ whose attaching cycles (for some choice of a
metric) in $X_i$ do not intersect, are replaced by two simple cobordisms $Y_{i}',Y_{i+1}'$, which carry critical points of index $l$ and $k$ whose attaching cycles in $X_{i}'$ do not intersect, such that $Y_{i}\cup_{X_i} Y_{i+1} =Y_{i}'\cup_{X_{i}'} Y_{i+1}'$ up to a diffeomorphism that fixes the boundary $X_{i-1}\cup X_{i+1} = X_{i-1}'\cup X_{i+1}'$. 
\end{defn}

\begin{defn}[cylinder gluing]
In which two simple cobordisms $Y_i,Y_{i+1}$, one of which is cylindrical, are replaced by the simple cobordism $Y_{i}\cup_{X_i} Y_{i+1}$. 
\end{defn}

Using these moves, Wehrheim and Woodward prove the following invariance theorem, which gives conditions for a TFT defined just on simple cobordisms to extend to all cobordisms: 

\begin{theorem}\label{invariance}
Any partial functor $\dcob{2}^{(r,d)} \to \mc{C}$, which associates
\begin{enumerate}
\item to each compact, connected, oriented $d$-manifold $X$, an object $C(X) \in \text{Obj}(\mc{C})$,
\item to each equivalence class of compact, connected, oriented simple cobordism $Y$ from $X_-$ to $X_+,$ a morphism $\Phi(Y)$ from $C(X_-)$ to $C(X_+)$,
\item to the trivial cobordism $[0,1]\times X$ the identity morphism $1_C(X)$ of C(X),
\end{enumerate}

and satisfies the Cerf relations

\begin{enumerate}
\item If $Y_1$ from $X_0$ to $X_1$ and $Y_2$ from $X_1$ to $X_2$ are simple cobordisms such that $Y_1 \cup_{X_1} Y_2$ is a cylindrical cobordism via critical point cancellation, then
$$\Phi(Y_1) \circ \Phi(Y_2) = \Phi(Y_1 \cup_{X_1} Y_2) $$
\item If $Y_1,Y_2$ and $Y'_1,Y'_2$ are simple cobordisms related by critical point reversal, then
$$ \Phi(Y_1) \circ \Phi(Y_2) = \Phi(Y'_1) \circ \Phi(Y'_2) $$
\item If $Y_1,Y_2$ are simple cobordisms, one of which is cylindrical, then
$$ \Phi(Y_1) \circ \Phi(Y_2) = \Phi(Y_1 \cup_{X_1} Y_2)$$
\end{enumerate}
extends to a unique weak (\textit{2+1})-dimensional $\mc{C}$-valued topological field theory.
\end{theorem}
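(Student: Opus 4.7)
The plan is to prove existence and uniqueness of the extension by using a Morse function to cut any decorated cobordism into simple pieces, composing the partial functor on each, and then invoking Cerf theory to show the result does not depend on the choice.

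First I would define the extension. Given a decorated cobordism $Y$ from $X_-$ to $X_+$, I choose a Morse datum $(f,\underline{b})$ as in Definition \ref{morsedatum} and set $Y_i := f^{-1}([b_{i-1},b_i])$, so each $Y_i$ is a simple cobordism from $X_{i-1} := f^{-1}(b_{i-1})$ to $X_i := f^{-1}(b_i)$. Pulling back the bundle $P$ and restricting the connection $\delta$ to each slice gives each $Y_i$ the structure of a decorated simple cobordism and each $X_i$ the structure of a decorated surface. I then define
\[
\Phi(Y) := \Phi(Y_m)\circ\cdots\circ\Phi(Y_1),
\]
and uniqueness of the extension is immediate from functoriality, so the content is to verify that this is well defined on diffeomorphism classes of decorated cobordisms.

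Next I would prove well-definedness of $\Phi(Y)$. Two Morse data $(f_0,\underline{b}_0)$ and $(f_1,\underline{b}_1)$ on the same $Y$ can be connected by a generic one-parameter path $\{f_t\}_{t\in[0,1]}$ in the space of smooth functions fixing $X_\pm$. By Cerf's theorem, after a generic perturbation the path $f_t$ is Morse except at finitely many values of $t$, at which exactly one of the following codimension-one events happens: (i) a birth/death of a canceling pair of critical points of adjacent indices whose attaching cycles intersect transversally in one point, (ii) two critical values cross, corresponding to critical points whose attaching cycles in the intermediate level set are disjoint, or (iii) a regular level $b_i$ crosses a critical value, which modifies only the choice of $\underline{b}$ by enlarging or shrinking a cylindrical piece. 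These three events are exactly accounted for by the critical point cancellation, critical point reversal, and cylinder gluing relations. Hypotheses (1), (2), (3) of the theorem then imply that $\Phi(Y_m)\circ\cdots\circ\Phi(Y_1)$ is invariant across each generic event, and hence constant along $\{f_t\}$. Since for a fixed Morse function the auxiliary choice of $\underline{b}$ only affects the decomposition by inserting or removing cylindrical slabs, cylinder gluing again gives invariance, so $\Phi(Y)$ depends only on the decorated diffeomorphism class of $Y$.

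Finally I would verify functoriality. For composable decorated cobordisms $Y$ from $X_0$ to $X_1$ and $Y'$ from $X_1$ to $X_2$, choose Morse data that restrict to the common $X_1$ as the corresponding minimum and maximum level sets; concatenating yields a Morse datum for $Y\cup_{X_1}Y'$ whose sequence of simple pieces is the concatenation of those for $Y$ and $Y'$, and the defining composition formula gives $\Phi(Y\cup_{X_1}Y')=\Phi(Y')\circ\Phi(Y)$. The identity axiom is immediate from (3), since the trivial cobordism is cylindrical and any Morse datum for it has no critical points, reducing the composition to $1_{C(X)}$.

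The main obstacle is the Cerf-theoretic step: one must know that the generic codimension-one degenerations of Morse functions on a cobordism (rel boundary) are exactly births/deaths of canceling pairs and crossings of critical values, that the attaching cycles at a birth meet transversally in a single point, and that at a crossing the attaching cycles can be made disjoint in the intermediate level set. This is the standard content of Cerf's theory of pseudo-isotopies, but it is the nontrivial geometric input that makes the three relations sufficient; once it is in hand, each algebraic verification reduces to a single application of one of the hypotheses (1)--(3).
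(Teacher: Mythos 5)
Your proposal is essentially correct and reconstructs the standard Cerf-theoretic argument. In fact, the paper does not give its own proof of Theorem~\ref{invariance} at all: it simply refers the reader to pages 5--6 of the Wehrheim--Woodward paper. So your sketch supplies precisely the content the paper elides.

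Two small remarks. First, a notational point: the Cerf relations in the theorem are written with composition in diagrammatic order (e.g.\ $\Phi(Y_1)\circ\Phi(Y_2) = \Phi(Y_1\cup_{X_1}Y_2)$ with $Y_1$ the earlier cobordism), whereas you write $\Phi(Y) := \Phi(Y_m)\circ\cdots\circ\Phi(Y_1)$ and $\Phi(Y\cup_{X_1}Y') = \Phi(Y')\circ\Phi(Y)$, the usual categorical convention. Your usage is internally consistent, but it is the opposite of the paper's, so one of the two should be flipped to match.

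Second, one small step is left implicit. Between the codimension-one events of the path $f_t$, the Morse function varies smoothly but nontrivially, so the slabs $Y_i(t) = f_t^{-1}([b_{i-1}(t),b_i(t)])$ and the gluing identifications along $X_i(t)$ all change with $t$. To conclude that $\Phi(Y_m(t))\circ\cdots\circ\Phi(Y_1(t))$ is constant on such an interval, you implicitly use that the ambient isotopy of the level sets carries each $Y_i(0)$ diffeomorphically (rel boundary) onto $Y_i(t)$ compatibly with the gluings, and that $\Phi$ is defined on equivalence classes under such diffeomorphisms. This continuity/isotopy-extension step is standard, but it is worth stating, since without it the reduction to the three discrete Cerf moves is incomplete.
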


\begin{proof} See \cite{wehrheim2008floerA}, pages 5 and 6, and the comment following the theorem statement on page 6.
\end{proof}
\subsection{Moduli spaces of $U(r)$ connections and symplectic-valued field theories}

By Theorem \ref{invariance}, to define a weak $\mc{C}$-valued (2+1)-dimensional TFT, it suffices to assign functors in $\mc{C}$ to all simple cobordisms, and prove that these functors satisfy the Cerf relations. Ultimately, we are after functor-valued TFT's, i.e. we would like to take $\mc{C}$ to be $\opn{Cat}$, the category of categories. The construction will be factored into two stages: first, we define a partial functor $$\mc{M}: \dcob{2}^{(r,d)} \to \opn{Symp}_\tau^\#,$$ which satisfies the Cerf relations and therefore can be extended to an honest field theory with values in $\opn{Symp}_\tau^\#$; then, we apply the Floer homology categorification functor from Theorem \ref{catfunc}. 

Recall that the objects and morphisms of $\dcob{2}^{(r,d)}$ are manifolds equipped with a principle bundle $P$ and a connection $\delta$ on $\opn{Det}(P)$. To such a decorated manifold the functor $\mc{M}$ associates the moduli space of central curvature connections on $P$ with determinant equal to $\delta$. Generally speaking, these moduli spaces are finite dimensional varieties (sometimes singular), defined as the quotient of an infinite dimensional affine space of connections by the action of an infinite dimensional Lie group of bundle automorphisms, called the gauge group. 

We will not give any details for this general case; for these, the interested reader should refer to \cite[$\S 3.2$]{floerfieldtheoryNew}. Instead, in the next section, we will give a more topological description of these connection spaces in the case that $(r,d) = (2,1)$. However, for completeness, and to make contact with Wehrheim and Woodward's notation, we first give the general statement proved in \cite{floerfieldtheoryNew}:

\begin{defn}[moduli spaces of central curvature connections for a decorated surface]\label{modsurface}
For $(X,P,\delta)$ a decorated surface, define 

$$M(X) := M_\delta(X,P)$$ 

to be the moduli space of central curvature connections on $P$ with determinant $\delta$. 
\end{defn}

\begin{defn}[moduli spaces of central curvature connections for a decorated cobordism]\label{modcobordism}
For $(Y,P,\delta)$ a decorated cobordism with boundary $(X_{\pm},P_\pm,\delta_\pm)$ define 

$$L(Y) := L_\delta(Y,P)\subset M(X_-)\times M(X_+)$$

to be the image, under restriction to the boundary, of the moduli space of central curvature connections on $P$ with determinant $\delta$.
\end{defn}

In this notation, the following is the main theorem proved in \cite{floerfieldtheoryNew}, demonstrating that moduli spaces of connections yield a symplectic-valued TFT: 

\begin{theorem}\label{repTFT}
Suppose that $r$ is coprime to $d$.
\begin{enumerate}
\item For any decorated surface $X$ with rank $r$ and degree $d$, $M(X)$ is a smooth compact 1-connected manifold and admits a canonical monotone symplectic form with monotonicity constant $\tau^{-1} = 2r$. 
\item For any decorated \textit{simple} cobordism $Y$ with rank $r$ and degree $d$, $L(Y)$ is a smooth Lagrangian correspondence and admits a unique relative spin structure.
\item The assignments $$X\mapsto M(X), ~ Y\mapsto L(Y)$$ satisfy the Cerf relations of Theorem \ref{invariance}, and therefore define a topological field theory $$\mc{M}: \dcob{2}^{(r,d)} \to \operatorname{Symp}^{\#}_{1/2r}$$
\end{enumerate}
\end{theorem}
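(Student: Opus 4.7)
The plan is to establish the three items in turn, leveraging classical results on moduli of $U(r)$-connections over Riemann surfaces for (1) and (2), and then reducing (3) to a handle-by-handle check of the Cerf relations from Theorem \ref{invariance}.

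For (1), I would invoke the Narasimhan--Seshadri correspondence to identify $M(X)$ with a twisted representation variety, namely the space of conjugacy classes of homomorphisms $\pi_1(X\setminus\{x_0\})\to U(r)$ with central holonomy around $x_0$ determined by $d$. Coprimality of $r$ and $d$ forces every such representation to be irreducible, so the naive symplectic quotient of the Atiyah--Bott affine space of connections by the based gauge group is smooth and compact. Simple connectivity follows from the Atiyah--Bott Morse-theoretic analysis of the Yang--Mills functional, together with the fact that in the coprime setting the semistable stratum coincides with the stable stratum. The symplectic form descended from $\frac{1}{4\pi^2}\int_X \operatorname{tr}(a\wedge b)$ is the Atiyah--Bott form; an index-theoretic computation of $c_1(TM(X))$ via the determinant line bundle identifies $[\omega]$ with $(2r)^{-1}c_1(TM(X))$, giving monotonicity constant $\tau^{-1}=2r$.

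For (2), I would analyze the restriction map from flat (or more generally central curvature) connections on $Y$ to the boundary. A Stokes-type argument shows that $L(Y)\subset \overline{M(X_-)}\times M(X_+)$ is isotropic, because the difference of the two Atiyah--Bott forms pulls back to an exact form on the affine space of connections on $Y$. Since the dimension of $L(Y)$ equals half that of $M(X_-)\times M(X_+)$ by an Euler-characteristic count, $L(Y)$ is in fact Lagrangian. For cylindrical $Y$ one recovers the diagonal $\Delta_{M(X)}$. For $Y$ carrying a single critical point, one describes $L(Y)$ concretely in terms of the attaching cycle: it is cut out of $M(X_-)$ by imposing triviality of holonomy along the attaching cycle (index $1$) or obtained by extending a connection on $X_-$ over the cocore (index $2$). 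Smoothness in both cases again uses coprimality to exclude reducible loci, and the unique relative spin structure is read off from the canonical spin structure on the moduli over the cobordism.

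For (3), the cylinder-gluing Cerf relation is immediate because composing with the diagonal is the identity in $\operatorname{Symp}^{\#}_{1/2r}$. Critical-point reversal follows from the fact that both orderings of geometric composition compute the same $L(Y_1\cup Y_2)$ from connections on the glued cobordism, and the hypothesis that the two attaching cycles are disjoint means both intermediate compositions are visibly transverse and embedded. Critical-point cancellation is the substantive relation: when two adjacent-index handles cancel via a single transverse intersection of their attaching cycles, one must show that the geometric composition $L(Y_i)\circ L(Y_{i+1})$ equals $\Delta_{M(X)}$ as a transverse and embedded composition. This reduces to showing that the unique gluing of connections across the cancelling pair is non-degenerate, which in turn follows from the single-point transversality of the attaching cycles.

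The main obstacle I expect is precisely this last verification: showing that in every allowed critical-point cancellation the geometric composition is not merely set-theoretically the diagonal but is actually transverse and embedded at the smooth level, uniformly across handle index combinations, and that the relative spin and monotonicity data transport correctly through the composition. Tracking Stiefel--Whitney classes across the attaching-cycle correspondence and verifying that the Maslov--action ratio $2A=\tau I$ persists under composition are the two bookkeeping steps most likely to require careful argument; everything else reduces to well-documented properties of $M(X)$.
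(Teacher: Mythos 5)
The paper does not prove Theorem~\ref{repTFT}; it is explicitly stated as ``the main theorem proved in \cite{floerfieldtheoryNew}'' and used as a black box, so there is no in-paper proof to compare your sketch against. The correct move at the level of this thesis is simply to cite Wehrheim and Woodward, as the paper does.

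That said, your outline is a recognizable roadmap of the actual argument in the cited work: Narasimhan--Seshadri together with the Atiyah--Bott stratification for part (1); the restriction-to-boundary picture, the Stokes-type isotropy argument, and coprimality-based smoothness for part (2); and a handle-by-handle check of the Cerf relations for part (3). You are right to single out critical-point cancellation as the substantive step, but the gap there is real and not small. Showing that $L(Y_i)\circ L(Y_{i+1})$ is not merely set-theoretically the diagonal but is a \emph{transverse and embedded} geometric composition requires an analysis of the elliptic boundary value problems on the glued cobordism (and, in Wehrheim--Woodward's setup, of extended moduli spaces with appropriate collar structures), and it cannot be reduced to the observation that the attaching spheres meet transversely in a point. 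Similarly, the existence and uniqueness of the relative spin structure on $L(Y)$, and the persistence of monotonicity and minimal Maslov number bounds under composition, are separate results in \cite{floerfieldtheoryNew} rather than corollaries of the picture you describe. As an honest sketch that flags its own gaps, your proposal is consistent with the literature; as a self-contained proof it is incomplete in exactly the places you identify.
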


\begin{rem}
We emphasize that for part 2. of Theorem \ref{repTFT}, it is crucial that $Y$ be a \textit{simple} cobordism; for a general decorated cobordism $Y$, the moduli space of connections on $Y$ will generally not give a smooth submanifold when restricted to the boundary moduli spaces. 
\end{rem}

\subsection{Moduli spaces of twisted $SU(2)$ representation}\label{modsu2section}

Rather than give the precise definitions of the moduli spaces of central curvature $U(r)$ connections from the previous section, in this section we will give an alternative topological description. For simplicity we restrict to the case that $(r,d) = (2,1)$, though there is an analogous topological description for every rank and degree. 

An original, in-depth reference for this material (and much more) is \cite{atiyah1983yang}, but Wehrheim and Woodward also include this alternative description in their paper, so readers looking for proofs of the statements in this section can turn to \cite[p. 29]{floerfieldtheoryNew}, and references therein. 

To begin, let $\Sigma_g$ be a closed, oriented surface of genus $g$. Choose a basepoint $p\in \Sigma_g$, and let $\gamma$ be a small loop in $\Sigma \backslash \{p\}$ which is freely isotopic to the puncture. Then

\begin{defn}[moduli space $\mc{R}_g$ of twisted $SU(2)$ representations for $\Sigma_g$]\label{twisted} The \textbf{moduli space $\rg$ of twisted $SU(2)$ representations for a surface $\Sigma_g$ of genus $g$} is  $$ \rg : = \{ \rho : \pi_1 (\Sigma \backslash \{p\})\to SU(2) : \rho(\gamma) = -\mathbb{I} \}/SU(2)$$
Here, $\mathbb{I}$ denotes the identity matrix in $SU(2)$, and the quotient is by conjugation. Although $\gamma$ only defines a conjugacy class in $\pi_1 (\Sigma \backslash \{p\})$, $-\mathbb{I}$ is central, so the condition that $\rho(\gamma)=-\mathbb{I}$ is well-defined. For the same reason, this condition is conjugation invariant, so the conjugation action is well-defined. 
\end{defn}

As noted above, $\mc{R}_g$ gives another description of the moduli spaces of central curvature connections with fixed determinant:

\begin{theorem}[see \cite{atiyah1983yang}, $\S 6$]\label{repconn} For a decorated surface $(X,P,\delta)$ with genus $g$, rank 2, and degree 1, the association $\alpha \mapsto \opn{Mon}_\alpha$, sending a connection to its monodromy mapping, leads to a diffeomorphism $$M(X) \cong \mc{R}_g,$$ where $M(X)$ is the moduli space from Definition \ref{modsurface}. (Further, this diffeomorphism is natural with respect to diffeomorphisms of decorated surfaces). 
\end{theorem}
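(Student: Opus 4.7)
The plan is to prove the theorem in two phases: first establishing a set-theoretic bijection between the gauge equivalence classes of central-curvature connections on $P$ (with determinant $\delta$) and the space $\mc{R}_g$ of twisted representations, then upgrading to a diffeomorphism and verifying naturality.

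For the bijection, I would begin by exploiting the fact that although $P$ itself is non-trivial over $\Sigma_g$ (since $\deg(P) = 1$ is odd), its restriction to $\Sigma_g \setminus D$ for a small disk $D$ centered at the basepoint $p$ is trivial, because $\Sigma_g \setminus D$ has the homotopy type of a wedge of circles. Fix a trivialization $P|_{\Sigma_g \setminus D} \cong (\Sigma_g \setminus D) \times U(2)$ and identify the boundary loop $\partial D$ with $\gamma$. Given a central-curvature connection $\alpha$ with $\det(\alpha) = \delta$, the associated $PU(2) = SO(3)$ connection on $\opn{Ad}(P)$ is flat, and the obstruction to lifting the $SO(3)$-holonomy to $SU(2)$ is precisely $w_2(\opn{Ad}(P)) = \deg(P) \mod 2 = 1$. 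Equivalently, by a $U(1)$-valued gauge modification preserving the determinant condition (up to a controllable twist), one may arrange $\alpha$ to be flat on $\Sigma_g \setminus D$ with all of its curvature concentrated near $p$. Monodromy then yields a homomorphism $\rho_\alpha: \pi_1(\Sigma_g \setminus \{p\}, p') \to U(2)$, which (after fixing a $U(1)$-gauge) lands in $SU(2)$.

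The key computation is $\rho_\alpha(\gamma) = -\I$. This follows from Stokes' theorem: the monodromy around $\gamma$ is $\exp(-\int_D F_\alpha)$ up to conjugation, and since $F_\alpha$ is central with $\int_{\Sigma_g} \tfrac{i}{2\pi}\tr F_\alpha = \deg(P) = 1$, one computes that the holonomy around $\gamma$ equals $-\I$ in $SU(2)$ after the normalization imposed by fixing $\delta$. Conversely, given a class $[\rho] \in \mc{R}_g$, one builds a flat $SU(2)$-connection on $\Sigma_g \setminus D$ realizing $\rho$, and extends across $D$ to a central-curvature connection on $P$ using precisely the condition $\rho(\gamma) = -\I$ to guarantee that the extension closes up to a smooth connection on the non-trivial bundle $P$ with the prescribed determinant. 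Standard gauge theory verifies that determinant-preserving gauge equivalence on the connection side corresponds exactly to $SU(2)$-conjugation on the representation side, yielding a bijection.

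To promote the bijection to a diffeomorphism, I would either produce an explicit smooth inverse by the gluing construction in the previous paragraph, or invoke Atiyah-Bott deformation theory (see \cite{atiyah1983yang}) to identify the tangent spaces on both sides with the same cohomology group $H^1(\Sigma_g \setminus \{p\}; \opn{ad}\rho)$ and show that the differential of the monodromy map is the canonical isomorphism between them. Naturality under a diffeomorphism $\phi:(X,P,\delta) \to (X',P',\delta')$ of decorated surfaces is then immediate from the naturality of parallel transport: pulling back a connection pulls back its monodromy representation. The principal obstacle in this plan is the careful bookkeeping in the central step showing $\rho_\alpha(\gamma) = -\I$, which requires pinning down the precise normalization constant for central curvature imposed by the fixed determinant $\delta$ and the degree condition, and ensuring the $U(1)$-gauge modification used to land in $SU(2)$ is globally consistent with the chosen trivialization over $\Sigma_g \setminus D$.
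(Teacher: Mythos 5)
The paper does not give its own proof of this statement; it simply cites Atiyah--Bott \cite{atiyah1983yang}, $\S 6$, where the correspondence between central-curvature $U(2)$-connections of fixed determinant and twisted $SU(2)$-representations is established. Your sketch is a reasonable reconstruction of exactly that argument: trivialize $P$ off a disk, pass to the flat adjoint $SO(3) = PU(2)$-connection, identify the obstruction to an $SU(2)$-lift as $w_2(\opn{Ad}P) = \deg P \bmod 2 = 1$, read off the central holonomy $-\I$ at the puncture, and run the construction in reverse, with the identification of tangent spaces via deformation theory to get smoothness. So this is essentially the same approach as the cited source. One point of care, which you partly flag yourself: the phrase ``by a $U(1)$-valued gauge modification \dots one may arrange $\alpha$ to be flat on $\Sigma_g \setminus D$ with all of its curvature concentrated near $p$'' is not quite right as written, since a genuine gauge transformation leaves the curvature two-form unchanged pointwise; what you want is either to pass directly to the flat $SO(3)$-connection on $\opn{Ad}(P)$ (your first stated option, which is the clean route), or to subtract a fixed $U(1)$-connection (i.e., tensor by a line bundle with connection), which changes the determinant and therefore requires tracking that twist. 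Similarly, the Stokes-theorem computation of $\rho_\alpha(\gamma) = -\I$ is most transparently done through the $\Z/2$-obstruction on the $SO(3)$ side rather than directly through $\exp(-\int_D F_\alpha)$, which vanishes as $D$ shrinks and so needs the right normalization; the correct statement is that the flat $SU(2)$-connection on $\Sigma_g\setminus\{p\}$ closes up to a degree-$d$ bundle over $\Sigma_g$ precisely when its boundary holonomy is $e^{\pi i d}\I$ in the rank-2 case, giving $-\I$ for $d = 1$. These are expository care-points rather than gaps; the overall plan is sound and matches Atiyah--Bott.
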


Therefore, the following properties of $\mc{R}_g$ are a direct corollary of the general discussion in \cite{floerfieldtheoryNew}:

\begin{theorem}[see \cite{floerfieldtheoryNew}\label{rprop}, $\S 3.2 -\S 3.3$]\label{modsu2props} Fix $g\geq 1$. 

\begin{enumerate}
\item $\rg$ is a smooth, oriented, compact manifold of dimension $6g-6$.
\item $\rg$ has a canonical symplectic form $\omega$.
\item $(\rg,\omega)$ is monotone with minimal Chern number 2. 

\end{enumerate}
\end{theorem}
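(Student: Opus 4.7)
The plan is to transport each of the three properties from the connection-theoretic side to $\rg$ via the diffeomorphism $M(X)\cong \rg$ of Theorem \ref{repconn}, applied to any decorated surface $(X,P,\delta)$ of genus $g$, rank $r=2$, and degree $d=1$. Since $\gcd(r,d)=1$, Theorem \ref{repTFT} is directly applicable.

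For item (1), Theorem \ref{repTFT}(1) yields that $M(X)$ is smooth, compact, and 1-connected, hence so is $\rg$; orientability then follows from the existence of an $\omega$-compatible almost complex structure, provided by part (2) below. The dimension $6g-6$ is most transparent on the representation side: $\rg$ is the quotient by $SU(2)$-conjugation of the level set
$$ \Bigl\{(A_1,B_1,\dots,A_g,B_g)\in SU(2)^{2g} \,:\, \prod_{i=1}^g [A_i,B_i]=-\I\Bigr\} \subset SU(2)^{2g}. $$
The twisting condition $\rho(\gamma)=-\I$ forces the stabilizer of each point to be the center $\{\pm\I\}$, so $PSU(2)=SO(3)$ acts freely. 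A transversality argument shows that the commutator-product map $SU(2)^{2g}\to SU(2)$ is a submersion along this level set, cutting the dimension from $6g$ down to $6g-3$; quotienting by the free $3$-dimensional action yields $6g-6$.

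For item (2), the canonical symplectic form is the one supplied by Theorem \ref{repTFT}(1). In the representation-theoretic picture it coincides with the Atiyah-Bott-Goldman form, whose description at a smooth point $[\rho]$ pairs the cup product on $H^1(\Sigma_g;\mathfrak{ad}\rho)$ with the Killing form on $\mathfrak{su}(2)$; naturality of Theorem \ref{repconn} with respect to diffeomorphisms of decorated surfaces guarantees that the two symplectic forms match up under the identification.

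For item (3), Theorem \ref{repTFT}(1) already provides monotonicity (with monotonicity constant $\tau^{-1}=2r=4$). The step I expect to be the main obstacle is pinning the minimal Chern number down to exactly $N_{\rg}=2$, since this particular integer is not an output of Theorem \ref{repTFT}. My plan is to invoke the classical computation of $\pi_2(\rg)\cong \Z$ and of $c_1(T\rg)$ due to Atiyah-Bott for moduli of stable rank-$r$ degree-$d$ bundles with $\gcd(r,d)=1$: one exhibits an explicit generator of $\pi_2(\rg)$, typically as a rational curve in the moduli space arising from the projectivization of extensions of line bundles on $\Sigma_g$, and verifies by direct pairing that $\langle c_1(T\rg), u\rangle = 2$. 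No smaller positive value can occur, since this generator is primitive in $\pi_2$ by construction, so the image of the index map is exactly $2\Z$.
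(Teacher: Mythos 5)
Your proposal is correct and follows essentially the same route as the paper, which treats Theorem~\ref{modsu2props} as an immediate consequence of Theorem~\ref{repconn} and the cited results of Wehrheim--Woodward (Theorem~\ref{repTFT} and \cite{floerfieldtheoryNew} \S3.2--3.3); the extra detail you supply (the $6g-3-3$ dimension count from the free $SO(3)$ action on the commutator level set, and the Atiyah--Bott computation $c_1(T\rg)=2\alpha$ pinning down $N_{\rg}=2$) is precisely what is being swept into that citation.
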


By part 1 of Theorem \ref{modsu2props}, when $g = 1$ (so $\Sigma_g$ is a torus), the moduli space is 0 dimensional. In fact, it consists of a single point:

\begin{theorem}[twisted $SU(2)$ representations on the torus]\label{modsu2torus} $ \mc{R}_1 \cong \text{pt}.$ 
\end{theorem}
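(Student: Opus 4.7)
The plan is to parametrize $\mathcal{R}_1$ explicitly and show that there is a unique $SU(2)$-orbit. Let $\Sigma_1$ be the torus with basepoint $p$, so that $\pi_1(\Sigma_1 \setminus \{p\})$ is the free group $\langle a,b\rangle$, and the loop $\gamma$ around the puncture is represented by the commutator $[a,b]=aba^{-1}b^{-1}$. Setting $A=\rho(a)$ and $B=\rho(b)$, we have
\[
\mathcal{R}_1 \;=\; \bigl\{(A,B)\in SU(2)\times SU(2)\;:\;ABA^{-1}B^{-1}=-\mathbb{I}\bigr\}/SU(2),
\]
with $SU(2)$ acting by simultaneous conjugation. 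The goal is to show this quotient is a single point.

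First I would record the trace constraints forced by the defining equation. Rewriting $ABA^{-1}B^{-1}=-\mathbb{I}$ as $ABA^{-1}=-B$ and taking traces yields $\operatorname{tr}(B)=-\operatorname{tr}(B)$, hence $\operatorname{tr}(B)=0$. The symmetric identity $[B,A]=[A,B]^{-1}=-\mathbb{I}$ gives $\operatorname{tr}(A)=0$ as well. In particular, both $A$ and $B$ have eigenvalues $\pm i$ and are not central, so $A$ is diagonalizable in $SU(2)$ with non-central spectrum.

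Next I would use the $SU(2)$ action to normalize $A$. After conjugating, I can assume $A=\operatorname{diag}(i,-i)$. Writing a general element of $SU(2)$ as $B=\begin{pmatrix} x & y \\ -\bar{y} & \bar{x}\end{pmatrix}$ with $|x|^2+|y|^2=1$, a direct computation gives
\[
ABA^{-1}=\begin{pmatrix} x & -y \\ -\bar{y} & \bar{x}\end{pmatrix},
\]
so the equation $ABA^{-1}=-B$ forces $x=0$ and hence $|y|=1$. The residual stabilizer of $A$ is the diagonal maximal torus $T\subset SU(2)$, and conjugation by $\operatorname{diag}(e^{i\phi},e^{-i\phi})$ sends the off-diagonal entry $y$ to $e^{2i\phi}y$. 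This $T$-action is transitive on the unit circle of possible $y$'s, so I can further normalize $y=1$, producing the canonical representative $A=\operatorname{diag}(i,-i)$, $B=\begin{pmatrix}0 & 1 \\ -1 & 0\end{pmatrix}$.

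Finally I would verify this representative actually satisfies the relation (routine matrix multiplication) and check that the argument shows every orbit meets this single point, proving $\mathcal{R}_1$ is a single point as a set. Smoothness and the symplectic/manifold structure are already given by Theorem \ref{modsu2props}, so the set-theoretic identification with a point suffices. There is no serious obstacle here; the only point requiring care is keeping track of the residual gauge freedom after diagonalizing $A$, which I handle by noting that the $T$-action on the off-diagonal parameter is transitive.
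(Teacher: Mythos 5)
Your proof takes essentially the same route as the paper: identify $\pi_1(T^2\setminus\{p\})$ as free on two generators with $\gamma$ the commutator, and show that any pair $(A,B)\in SU(2)^2$ with $[A,B]=-\mathbb{I}$ is conjugate to the standard quaternionic pair $(I,J)$; the paper states this step as ``one can show directly,'' and you supply the direct computation. One small slip: with $A=\operatorname{diag}(i,-i)$ one gets $ABA^{-1}=\begin{pmatrix} x & -y \\ \bar{y} & \bar{x}\end{pmatrix}$ (the $(2,1)$ entry is $+\bar{y}$, not $-\bar{y}$); as written your matrix equation $ABA^{-1}=-B$ would force $y=0$ as well and yield a contradiction, but the diagonal entries alone give $x=0$ and hence $|y|=1$, so the argument and conclusion are unaffected.
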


\begin{proof} The content of the theorem is that there is, up to conjugation, a unique homomorphism $\pi_1(T^2\backslash \{p\})\to SU(2)$ assigning $-\mathbb{I}\in SU(2)$ to a small loop $\gamma$ around $p$. To prove this, choose a standard basis $x,y$ for the rank 2 free group $\pi_1(T^2\backslash\{p\})$, and note that $\gamma = [x,y]$ (the commutator of $x$ and $y$). At this point, one can show directly that any pair of matrices $(A,B)$ in $SU(2)$ satisfying $[A,B] = -\mathbb{I}$ can be mutually conjugated to the pair $(I,J)$, where $I$ and $J$ are the standard matrices representing the corresponding unit quaternions. 
\end{proof}

Note that the moduli space $\mc{R}_0$ is empty, i.e. the sphere admits no twisted $SU(2)$ representations.

In addition to the moduli spaces for surfaces, the connection moduli spaces $L(Y)$ associated to decorated cobordisms (see Definition \ref{modcobordism}) also admit a topological description, in terms of Lagrangian correspondences between moduli spaces of twisted $SU(2)$ representations. For brevity, we will not include a general discussion of these spaces, but we will describe a special case, in which these correspondences actually reduce to ``classical'' Lagrangian submanifolds. Namely, let $Y$ be a compression body, and furthermore suppose that $Y$ goes from a torus to a higher-genus surface, i.e. $\partial Y = T^2 \cup \Sigma_g$ with $g\geq 1$. Choose basepoints $p_0\in T^2$ and $p_1\in \Sigma_g$, and let $\ell\subset Y$ be a connected arc whose intersection with $T^2$ and $\Sigma_g$ is given by the sets $\{p_0\}$ and $\{p_1\}$, respectively. Let $\gamma\subset (Y\backslash \ell)$ be a meridian of the arc $\ell$ (i.e., $\gamma$ gives a section of the normal bundle to $\ell$, intersecting the normal fibers with multiplicity 1). 

\begin{defn}\label{replag} Define $\L(Y,\ell) \subset \mc{R}_g$ to be the subspace of conjugacy classes of representations of $\pi_1(\Sigma_g\backslash \{p_1\})$ in $\mc{R}_g$ which extend to representations of $\pi_1(Y\backslash \ell)$, and which send any loop in the conjugacy class of $\gamma$ to $-\mathbb{Id}$. 
\end{defn}

\begin{lemma}\label{ell}
$\L(Y) := \L(Y,\ell)$ is independent of the choice of $\ell$.
\end{lemma}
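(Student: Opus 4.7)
The plan is to show that the defining condition in Definition \ref{replag} admits an intrinsic reformulation in which $\ell$ does not appear. The reformulation passes through the quotient $SU(2) \to SO(3)$. Since $\rho \in \mathcal{R}_g$ sends a small loop around $p_1$ to the central element $-\mathbb{I}$, composing with the quotient map yields a genuine representation $\bar\rho : \pi_1(\Sigma_g) \to SO(3)$ of the closed surface group (the small loop dies in $SO(3)$). The key claim to establish is: $\rho \in \mathcal{L}(Y,\ell)$ if and only if $\bar\rho$ extends to a representation $\pi_1(Y) \to SO(3)$. Since the right-hand condition manifestly does not involve $\ell$, this immediately yields the lemma.

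First I would prove the ``only if'' direction, which is essentially formal. If $\rho$ extends to $\hat\rho : \pi_1(Y \setminus \ell) \to SU(2)$ with $\hat\rho(\gamma) = -\mathbb{I}$, then composing with $SU(2) \to SO(3)$ kills the meridian $\gamma$. Since the meridian normally generates the kernel of the surjection $\pi_1(Y \setminus \ell) \to \pi_1(Y)$ (this kernel is generated by the meridian class because $\ell$ is a cell in $Y$, via a standard van Kampen argument), the composition descends to the desired extension $\bar\rho_Y : \pi_1(Y) \to SO(3)$.

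The ``if'' direction is the substantive part and requires constructing a lift. Given an extension $\bar\rho_Y : \pi_1(Y) \to SO(3)$, precomposing with the projection $\pi_1(Y\setminus \ell) \to \pi_1(Y)$ gives a representation to $SO(3)$; we want to lift this to $SU(2)$ so that $\gamma \mapsto -\mathbb{I}$ and so that the restriction to $\pi_1(\Sigma_g \setminus \{p_1\})$ agrees with $\rho$. This is a bundle-lifting problem governed by an obstruction class in $H^2(Y \setminus \ell ; \mathbb{Z}/2)$, together with a choice among $H^1(Y \setminus \ell; \mathbb{Z}/2)$-torsor of lifts. I would show the obstruction vanishes using that (i) the lift already exists on $\Sigma_g \setminus \{p_1\}$ by hypothesis and on $T^2 \setminus \{p_0\}$ by uniqueness of the twisted representation on the torus (Theorem \ref{modsu2torus}), and (ii) the compression body $Y$ has the homotopy type of $T^2$ with $(g{-}1)$ 1-cells attached, so the relative $H^2$ group assembling the local lifts into a global one is controlled entirely by the boundary data. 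Finally, among lifts I would select the one sending $\gamma$ to $-\mathbb{I}$ (rather than $+\mathbb{I}$) by twisting against the character of $H^1(Y \setminus \ell; \mathbb{Z}/2)$ dual to the meridian class if necessary; centrality of $-\mathbb{I}$ in $SU(2)$ ensures this twist preserves consistency of all relations not involving $\gamma$.

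The main obstacle I expect is the obstruction-theoretic step showing that the lifts on the two boundary components $T^2 \setminus \{p_0\}$ and $\Sigma_g \setminus \{p_1\}$ assemble across $Y \setminus \ell$. The delicate point is precisely that the obstruction is cohomological and hence independent of the embedded representative $\ell$ of its relative homology class, while centrality of $-\mathbb{I}$ ensures the meridian condition is insensitive to the conjugation ambiguity inherent in choosing the meridian class for a given $\ell$. These two observations together are what make the final answer independent of $\ell$.
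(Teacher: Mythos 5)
Your key claim --- that $\rho \in \mathcal{L}(Y,\ell)$ if and only if $\bar\rho$ extends to $\pi_1(Y) \to SO(3)$ --- is false, and the ``if'' direction cannot be repaired. Since $\pi_1(\Sigma_g)\twoheadrightarrow\pi_1(Y)$ has kernel normally generated by the attaching curves $\alpha_1,\dots,\alpha_{g-1}$ of the compression body, the condition that $\bar\rho$ extend over $Y$ is simply that $\bar\rho(\alpha_i)=\mathbb{I}_{SO(3)}$, i.e.\ $\rho(\alpha_i)\in\{\pm\mathbb{I}\}$ for each $i$. But membership in $\mathcal{L}(Y,\ell)$ (for $\ell$ chosen disjoint from the compressing disks $D_i$, which can always be arranged) requires $\rho(\alpha_i)=+\mathbb{I}$ exactly, because each $\alpha_i$ bounds $D_i$ inside $Y\setminus\ell$ and is therefore trivial in $\pi_1(Y\setminus\ell)$; this is precisely the content of the follow-up Lemma~\ref{attach}. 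The two conditions genuinely differ. For example with $g=2$, take standard free generators $a_1,b_1,a_2,b_2$ of $\pi_1(\Sigma_2\setminus\{p_1\})$ with $\gamma = [a_1,b_1][a_2,b_2]$ and $\alpha_1=a_1$; the assignment $a_1\mapsto -\mathbb{I}$, $b_1\mapsto\mathbb{I}$, $a_2\mapsto I$, $b_2\mapsto J$ lies in $\mathcal{R}_2$ and has $\bar\rho$ extending over $\pi_1(Y)$, yet it is not in $\mathcal{L}(Y,\ell)$. This sign in $\{\pm\mathbb{I}\}^{g-1}$ is exactly the obstruction you gesture at in $H^2$, and it does not vanish in general; the sentence ``I would show the obstruction vanishes'' is therefore the point where the plan fails, not a gap that can be filled.

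The conceptual issue is that passing from $SU(2)$ to $SO(3)$ over the three-manifold discards the one datum the definition is recording, namely the lift of the identity $\mathbb{I}_{SO(3)}=\bar\rho(\alpha_i)$ to $\pm\mathbb{I}\in SU(2)$. The correct $\ell$-independent reformulation is not an $SO(3)$ extension property over $Y$ but the refined $SU(2)$ condition $\rho(\alpha_i)=+\mathbb{I}$ of Lemma~\ref{attach}. The paper's own argument is more direct and never passes through a reformulation: using the surjection $\pi_1(\Sigma_g)\twoheadrightarrow\pi_1(Y)$ it isotopes all relevant loops into a collar of the $\Sigma_g$ boundary chosen thin enough that $\ell$ meets it as the product $\{p_1\}\times[0,\varepsilon]$, so that the extension condition is visibly determined by $\ell$-independent data near $\Sigma_g$.
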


\begin{proof}
Because $Y$ is a compression body, it is obtained from $\Sigma_g$ by attaching 2-handles to $g-1$ disjoint simple closed curves $\{\alpha_i\} \subset \Sigma_g$ (see the discussion following Definition \ref{cylindricalcobordism} for the definition of an attaching handle). In particular, the inclusions $\Sigma_g \into \partial Y \into Y$ induce a surjection $\pi_1(\Sigma_g)\onto \pi_1(Y)$. Therefore, all loops in $Y$ can be isotoped into a collar neighborhood of the $\Sigma_g$ component of the boundary, and we can take the collar small enough so that within the collar neighborhood, $\ell$ is given by the product of the collar by $p_1$. 
\end{proof}

In fact, this proof is easily extended to a stronger statement that clearly implies Lemma \ref{ell}, by giving an explicit $\ell$-independent description of the subspace $\L(Y,\ell) \subset \mc{R}_g$:

\begin{lemma}\label{attach}
Let $\{\alpha_i\} \subset \Sigma_g$ be a set of attaching curves for $Y$, as in the proof of Lemma \ref{ell}. Each $\alpha_i$ defines a conjugacy class in $\pi_1(\Sigma_g)$, and let $\mc{A}\subset \pi_1(\Sigma_g)$ denote the union of these conjugacy classes over all $\alpha_i$. Then  $$\L(Y) = \{ [\rho] \in \mc{R}_g \text{ such that } \rho(\mc{A}) = \mathbb{I}\in SU(2) \} $$ 
(again, this condition is conjugation invariant, and therefore well-defined, similarly to the discussion in Definition \ref{twisted}). 

\end{lemma}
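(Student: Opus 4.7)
The plan is to compute $\pi_1(Y \setminus \ell)$ via van Kampen's theorem and translate the extension condition for $\rho$ into the condition $\rho(\mc{A}) = \mathbb{I}$. By Lemma \ref{ell}, the subspace $\L(Y,\ell) \subset \mc{R}_g$ does not depend on $\ell$, so I am free to choose a convenient arc adapted to a preferred handle decomposition.

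Present $Y$ as $\Sigma_g \times [0,1]$ with $g-1$ disjoint 2-handles attached along the $\alpha_i \subset \Sigma_g \times \{0\}$, so that the compressed bottom face is identified with the boundary component $T^2$. After a small isotopy, the attaching regions of these 2-handles can be arranged to miss $p_1 \in \Sigma_g$, and I take $\ell = \{p_1\} \times [0,1]$; its bottom endpoint descends to a point in $T^2$ which I identify with $p_0$.

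Since $\ell$ lies entirely in the cylindrical region $\Sigma_g \times [0,1]$ and misses the 2-handles, removing $\ell$ yields $(\Sigma_g \setminus \{p_1\}) \times [0,1]$, with fundamental group $\pi_1(\Sigma_g \setminus \{p_1\})$. Attaching the 2-handles along the $\alpha_i$, which remain disjoint curves in the punctured surface, van Kampen then gives
$$\pi_1(Y \setminus \ell) \;\cong\; \pi_1(\Sigma_g \setminus \{p_1\}) \big/ \langle\langle \alpha_1, \ldots, \alpha_{g-1} \rangle\rangle.$$
Locally near the top endpoint of $\ell$, the meridian $\gamma$ is a small loop around $p_1$ in a slice $\Sigma_g \times \{1-\epsilon\}$, so under this identification $\gamma$ corresponds to the puncture loop used in Definition \ref{twisted}.

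The lemma now follows. A representation $[\rho] \in \mc{R}_g$ (which by definition sends the small loop around $p_1$ to $-\mathbb{I}$) extends to a representation of $\pi_1(Y \setminus \ell)$ sending $\gamma$ to $-\mathbb{I}$ if and only if it factors through the displayed quotient, i.e., if and only if $\rho(g \alpha_i g^{-1}) = \mathbb{I}$ for every $g \in \pi_1(\Sigma_g \setminus \{p_1\})$ and every $i$, which is precisely the condition $\rho(\mc{A}) = \mathbb{I}$. The only delicate step is the choice of $\ell$ in the second paragraph, together with the verification that this choice leaves the 2-handle attaching data undisturbed; that this is harmless is exactly the content of Lemma \ref{ell}, and all remaining steps are routine algebraic topology.
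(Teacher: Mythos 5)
Your proof is correct and is exactly the argument the paper has in mind: the paper states Lemma \ref{attach} with no written proof, remarking only that the proof of Lemma \ref{ell} ``is easily extended,'' and your van Kampen computation on the handle decomposition (with $\ell$ chosen as a vertical arc in the collar, justified by Lemma \ref{ell}) is that extension spelled out. The only point worth flagging is notational: $\rho$ is a priori a representation of $\pi_1(\Sigma_g \setminus \{p_1\})$ while the lemma writes $\mc{A} \subset \pi_1(\Sigma_g)$, so strictly one should interpret $\rho(\mc{A}) = \mathbb{I}$ as killing the $\pi_1(\Sigma_g \setminus \{p_1\})$-conjugacy classes of the $\alpha_i$ (not arbitrary lifts from $\pi_1(\Sigma_g)$, which differ by powers of $\gamma$) — and this is precisely the condition your argument produces.
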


As with $M(\Sigma_g)$ and $\mc{R}_g$, $L(Y)$ and $\L(Y)$ give different definitions of the same space:

\begin{theorem}[see \cite{floerfieldtheoryNew}]\label{equal}
Assuming still that $Y$ is a compression body with $\partial Y = T^2\cup \Sigma_g$, let $(Y,P,\delta)$ be a rank 2, degree 1 decorated cobordism structure on $Y$. Let $$L(Y) \subset M(T^2)\times M(\Sigma_g)$$ be the Lagrangian correspondence from Definition \ref{modcobordism}. Then

\begin{enumerate}

\item $L(Y)$ is diffeomorphic to its projected image $\pi_{M(\Sigma_g)}(L(Y)) \subset M(\Sigma_g)$ (this is simply because, as seen in Theorem \ref{modsu2torus}, $M(T^2) = \opn{pt}$), and therefore we can (and will, for the rest of this theorem) view $L(Y)$ as a subspace of $M(\Sigma_g)$. 

\item The diffeomorphism $M(\Sigma_g) \cong \mc{R}_g$ from Theorem \ref{repconn} yields a diffeomorphism between $L(Y)\subset M(\Sigma_g)$ and the subspace $\L(Y)\subset \mc{R}_g$ from Definition \ref{replag}:
$$ L(Y) \subset M(\Sigma_g) \cong \L(Y) \subset \mc{R}_g$$
\end{enumerate}
\end{theorem}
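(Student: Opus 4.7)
The plan is to handle the two parts in turn, with part (1) essentially immediate from Theorem \ref{modsu2torus} and part (2) amounting to an extension of the twisted Narasimhan--Seshadri-style correspondence (Theorem \ref{repconn}) from surfaces to cobordisms. For part (1), since $M(T^2)$ is a single point, the projection $\pi_{M(\Sigma_g)} : M(T^2) \times M(\Sigma_g) \to M(\Sigma_g)$ is a diffeomorphism, and $L(Y)$ is canonically identified with its image.

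For part (2), I would construct the analogue of the monodromy map one dimension higher. Given a central curvature connection $\alpha$ on $(Y, P, \delta)$ representing a point of $L(Y)$, the restriction of $P$ to the complement $Y \setminus \ell$ admits, after choosing a reduction compatible with $\delta$, the structure of a flat $SU(2)$-bundle. The central curvature condition together with $\deg(P)=1$ forces the holonomy of the reduced connection around the meridian $\gamma$ of $\ell$ to be $-\I$: this is how the nontrivial first Chern class is encoded once the bundle has been trivialized away from $\ell$. Taking holonomy then produces a conjugacy class of representations $\widetilde{\rho}: \pi_1(Y \setminus \ell) \to SU(2)$ with $\widetilde{\rho}(\gamma) = -\I$, whose restriction along $\pi_1(\Sigma_g \setminus \{p_1\}) \into \pi_1(Y \setminus \ell)$ agrees, under Theorem \ref{repconn}, with the class in $\rg$ corresponding to $\alpha|_{\Sigma_g}$.

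This establishes that the image of $L(Y)$ in $\rg$ sits inside $\L(Y)$. For the reverse inclusion, I would invoke the inverse of the holonomy construction: any $[\widetilde{\rho}] \in \L(Y)$ gives rise, via the standard associated-bundle recipe, to a flat $SU(2)$-bundle on $Y \setminus \ell$ whose meridional holonomy is $-\I$, and this extends across $\ell$ to a rank $2$, degree $1$ bundle equipped with a central curvature connection. The compression body geometry exploited in Lemma \ref{attach}, in particular the surjection $\pi_1(\Sigma_g) \onto \pi_1(Y)$, lets one arrange this extension so as to match the given bundle $P$ and determinant $\delta$ on the boundary, hence produces an element of $L(Y)$ restricting to the desired class on $\Sigma_g$.

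The main obstacle will be the careful handling of the determinant and gauge data: the pure holonomy picture sees only the traceless $SU(2)$ part of the connection, so one must verify that fixing $\delta$ on both boundary components does not over-determine the extension problem, i.e.\ that every class of $\L(Y)$ lifts to a central curvature connection on the specific pair $(P,\delta)$ rather than on a twist by some line bundle. This is the cobordism analogue of the detailed bundle-theoretic analysis underlying Theorem \ref{repconn}, and I would either import it directly from the discussion in \cite{floerfieldtheoryNew} or rederive it in the compression body setting, using the surjectivity of $\pi_1(\Sigma_g) \to \pi_1(Y)$ to tame the corresponding gauge-theoretic restriction maps and to show that the abelian ambiguity coming from the determinant is already absorbed by the choice of $\delta$ on $\Sigma_g$.
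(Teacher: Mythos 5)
The paper does not actually prove this theorem: the bracketed ``see \cite{floerfieldtheoryNew}'' in the theorem header indicates that the statement is imported wholesale from Wehrheim--Woodward, and no argument is given in this thesis. So there is no in-paper proof to compare your proposal against; what you have done is reconstruct a sketch of the argument one would find in that reference.

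Judged on its own terms, your outline has the right shape. Part (1) is indeed immediate from $M(T^2) = \opn{pt}$, as the theorem statement itself notes. For part (2), the central idea---that a central curvature connection on $(Y,P,\delta)$ becomes a flat $SU(2)$-connection after trivializing the determinant over $Y \setminus \ell$, with the unit degree encoded as $-\I$ holonomy around the meridian $\gamma$, and that taking holonomy intertwines the two boundary maps---is exactly the right extension of the Narasimhan--Seshadri picture from surfaces to cobordisms, and the associated-bundle construction is the correct inverse. You also put your finger on the genuinely delicate part, namely the determinant and gauge bookkeeping in the extension across $\ell$. But be careful to distinguish this roadmap from a proof: you assert rather than establish that the $SU(2)$-reduction over $Y\setminus\ell$ exists (this needs $c_1(\det P)$ to vanish there, which in turn requires pinning down the homotopy type of $Y\setminus\ell$ for a compression body); you do not address why the resulting bijection is a \emph{diffeomorphism} rather than merely a set-level correspondence, which is the content that makes $\mc{L}(Y)$ usable as a smooth Lagrangian; and the final extension-matching step is stated as a goal to be imported from \cite{floerfieldtheoryNew}. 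Deferring those points to the reference is exactly what the paper itself does, so it is a reasonable plan---just be explicit that what you have written would still require the cited argument to become a complete proof.
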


Thus, as with Theorem \ref{rprop}, the following properties of $\L(Y)\subset \mc{R}_g$ follow from the general discussion in \cite{floerfieldtheoryNew}:
\begin{theorem}[see \cite{floerfieldtheoryNew}, $\S 3.3$]\label{lprop} Fix $g\geq 2$, and let $Y_g$ denote a compression body with boundary $\partial Y = T^2 \cup \Sigma_g$. Then 
\begin{enumerate}
\item $\L(Y)$ is a simply-connected, Lagrangian submanifold of $\rg$, of half-dimension $3g-3$; in particular, it is homeomorphic to the product of $g-1$ 3-spheres. 
\item $\L(Y)$ is a monotone Lagrangian manifold, with minimal Maslov index 4.
\end{enumerate}
\end{theorem}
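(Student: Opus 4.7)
The plan is to combine the explicit description of $\L(Y)$ as a twisted $SU(2)$ representation variety given by Lemma \ref{attach} with the ambient symplectic structure on $\rg$ from Theorem \ref{modsu2props}. Three assertions must be established: the topology (and simple-connectedness), the Lagrangian property, and monotonicity with minimal Maslov index 4. The cleanest entry point is to settle the topology first, then the Lagrangian property, and finally the Maslov/monotonicity calculation.

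By Lemma \ref{attach}, I would fix a standard system of symplectic generators $a_1, b_1, \dots, a_g, b_g$ of $\pi_1(\Sigma_g \setminus \{p_1\})$ such that $a_1 = \alpha_1, \dots, a_{g-1} = \alpha_{g-1}$ are the attaching curves of $Y$. Such a choice exists because the attaching curves of any compression body from $T^2$ to $\Sigma_g$ form a rank $g-1$ isotropic subsystem of $H_1(\Sigma_g)$, which can always be extended to a symplectic basis by a diffeomorphism of $\Sigma_g$. In these generators the puncture loop satisfies $\gamma = \prod_{i=1}^g [a_i, b_i]$, so the conditions $\rho(\gamma) = -\I$ and $\rho(a_i) = \I$ for $i<g$ collapse to the single equation $[\rho(a_g), \rho(b_g)] = -\I$, with the $\rho(b_j)$ for $j<g$ unconstrained. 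By the argument in Theorem \ref{modsu2torus}, pairs $(A,B) \in SU(2)^2$ with $[A,B] = -\I$ form a single $SU(2)$-conjugation orbit whose stabilizer is the center $\{\pm \I\}$, which acts trivially on $SU(2)^{g-1}$ by conjugation. Slicing therefore yields $\L(Y) \cong SU(2)^{g-1} \cong (S^3)^{g-1}$, a simply-connected manifold of real dimension $3(g-1) = 3g-3$, which is exactly half of $\dim \rg = 6g-6$.

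For the Lagrangian property, $Y$ is not itself a simple cobordism, so Theorem \ref{repTFT}(2) does not apply directly. Instead I would decompose $Y$ into $g-1$ elementary simple cobordisms — each a single 2-handle attachment along one $\alpha_i$ — and compose the resulting Lagrangian correspondences. The concatenated generalized correspondence from $M(T^2) = \opn{pt}$ to $\rg$ equals $\L(Y)$, and to upgrade this to a smooth honest Lagrangian one must verify that each intermediate geometric composition is transverse and embedded. In the $SU(2)$ setting this reduces to showing that the holonomy-along-$\alpha_i$ map has surjective differential at each relevant representation, which is the content of \cite[\S 3.3]{floerfieldtheoryNew}. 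The dimension count from the previous step then certifies that $\L(Y)$ has the correct half-dimension to be Lagrangian.

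Monotonicity and the minimal Maslov computation I would handle via the long exact sequence of the pair $(\rg, \L(Y))$. Simple-connectedness of $\L(Y)$ makes $\pi_2(\rg) \to \pi_2(\rg, \L(Y))$ surjective, so every relative disk is equivalent to a capped-off sphere class, and its Maslov index equals twice the first Chern number of that sphere. This transports the sphere monotonicity $2A = \tau I$ of $\rg$ to disk monotonicity of $\L(Y) \subset \rg$ with the same constant, and forces the minimal Maslov index to be $2 N_{\rg} = 4$ by Theorem \ref{modsu2props}(3). The main obstacle I anticipate is the transversality step: without it, the iterated geometric composition only produces $\L(Y)$ as a formal generalized correspondence, and one needs genuine $SU(2)$-representation-theoretic input to promote it to a smooth Lagrangian of the claimed dimension — this is precisely the substantive content hidden in the reference to \cite{floerfieldtheoryNew}.
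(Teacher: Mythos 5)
Your sketch is essentially correct, and it supplies more detail than the paper itself, which only cites \cite{floerfieldtheoryNew}, $\S 3.3$, without a proof. The three pieces of your argument are sound and are in the spirit of what the reference does.

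The topology calculation is the cleanest part and is right: choosing a symplectic generating system adapted to the attaching curves (which is possible since those curves span a rank $g-1$ isotropic system in $H_1(\Sigma_g)$, and the mapping class group acts transitively on such systems), the constraints collapse to one commutator equation $[\rho(a_g),\rho(b_g)]=-\mathbb{I}$ with $\rho(b_1),\dots,\rho(b_{g-1})$ free; slicing against the transitive conjugation action with stabilizer $\{\pm\mathbb{I}\}$ acting trivially gives $\L(Y)\cong SU(2)^{g-1}\cong(S^3)^{g-1}$. That delivers simple-connectedness and dimension $3g-3$ at once.

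For the Lagrangian property, your route via decomposing the compression body into $g-1$ elementary 2-handle attachments and iterating geometric composition of the resulting correspondences is the approach the paper's general framework suggests (Theorem \ref{repTFT} is stated only for \emph{simple} cobordisms, and the Remark right after emphasizes this), and you are right that the substantive content — transversality and embeddedness of each intermediate composition — is what is being outsourced to \cite{floerfieldtheoryNew}. One quibble: the line ``the dimension count from the previous step then certifies that $\L(Y)$ has the correct half-dimension to be Lagrangian'' is a red herring — half-dimension is a consequence of being Lagrangian, not a criterion for it; once embedded geometric composition succeeds, the result is automatically Lagrangian of half-dimension and no further certification is needed. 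For completeness, there is also a more direct route that avoids the decomposition entirely: identifying $T_\rho\L(Y)$ with the image of $H^1(Y;\operatorname{ad}\rho)\to H^1(\Sigma_g;\operatorname{ad}\rho)$ and observing this is a Lagrangian subspace for the cup-product form by Poincar\'e--Lefschetz duality applied to the pair $(Y,\partial Y)$. Both styles of argument ultimately live in the same reference.

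Your monotonicity and minimal-Maslov argument is standard and correct: simple-connectedness of $\L(Y)$ makes $\pi_2(\rg)\to\pi_2(\rg,\L(Y))$ surjective, every relative disk class is a capped sphere with Maslov index $2c_1$, so $\L(Y)$ inherits monotonicity from $\rg$ with the same constant and minimal Maslov number $2N_{\rg}=4$ by Theorem \ref{modsu2props}(3).
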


Note the importance of the role played by Theorem \ref{modsu2torus} (the fact that the torus moduli space is a point) in Theorems \ref{equal} and \ref{lprop}. In the general case that $Y$ is a compression body from $\Sigma_{g_1}$ to $\Sigma_{g_2}$ with $1 < g_1 < g_2$, one can still project $L(Y)$ into $M(\Sigma_{g_2})$, but the image will not be Lagrangian, as one can check simply on dimension grounds.

\subsection{Category-valued and group-valued field theories}

By combining Theorem \ref{repTFT} and the categorification functor from Theorem \ref{catfunc}, we immediately deduce the following theorem/construction of category-valued field theories from twisted representation spaces (alternatively, moduli spaces of fixed-central-curvature connections):
\begin{theorem}[topological field theories from representation spaces, $\mc{FR}^{(r,d)}$]\label{rdFldTheory}
For any coprime integers $r$ and $d$, $r > 0$, the maps 
\begin{align*}
X &\mapsto C(X) := Don^{\#}(M(X)) \\
Y &\mapsto \Phi(Y ) := \Phi(\underline{L}(Y ))
\end{align*}
define a weak topological field theory ${\mathcal{FR}^{r,d}}$ from $\dcob{2}^{(r,d)}$ to the category $\operatorname{Cat}$ of (categories, isomorphism classes of functors), 

$$ \mathcal{FR}^{r,d}: \dcob{2}^{(r,d)}\to \operatorname{Cat} $$

In this thesis, we will only need the simplest (non-abelian) case of this invariant, with $r = 2$ and $d=1$, which we write as $$\mathbf{\mathcal{FR}} := \mathcal{FR}^{2,1}: \dcob{2}^{(2,d)}\to \operatorname{Cat}$$
\end{theorem}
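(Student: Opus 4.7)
The plan is essentially to compose the two functors already constructed in the preceding sections, and the work has been absorbed into the statements of Theorem \ref{repTFT} and Theorem \ref{catfunc}.

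First, I would invoke Theorem \ref{repTFT}: for any coprime pair $(r,d)$ with $r > 0$, the assignments $X \mapsto M(X)$ on decorated surfaces and $Y \mapsto L(Y)$ on decorated simple cobordisms already define a weak symplectic-valued topological field theory
\[
\mathcal{M} : \dcob{2}^{(r,d)} \to \operatorname{Symp}^{\#}_{1/(2r)}.
\]
In particular, Theorem \ref{repTFT}(1) ensures each $M(X)$ is a monotone symplectic manifold with monotonicity constant $\tau = 1/(2r)$, part (2) ensures each $L(Y)$ is a smooth Lagrangian correspondence equipped with a canonical relative spin structure, and part (3) is the statement that these assignments satisfy the Cerf relations of Theorem \ref{invariance} so that $\mathcal{M}$ extends from simple cobordisms to a genuine functor on all of $\dcob{2}^{(r,d)}$.

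Second, I would apply Theorem \ref{catfunc} with the specific value $\tau = 1/(2r) > 0$, which provides the categorification functor
\[
\operatorname{Don}^{\#} : \operatorname{Symp}^{\#}_{1/(2r)} \to \operatorname{Cat}
\]
sending a monotone symplectic manifold $M$ to its extended Donaldson-Fukaya category $\operatorname{Don}^{\#}(M)$ and a generalized Lagrangian correspondence class $[\underline{L}_{01}]$ to the isomorphism class of the induced functor $[\Phi(\underline{L}_{01})]$. The compatibility of the monotonicity constants is exactly what allows this composition: $\mathcal{M}$ outputs objects in the target category of $\operatorname{Don}^{\#}$ with matching $\tau$.

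Finally, I would simply define $\mathcal{FR}^{r,d} := \operatorname{Don}^{\#} \circ \mathcal{M}$. Unwinding, this sends $X \mapsto \operatorname{Don}^{\#}(M(X))$ and $Y \mapsto \Phi(\underline{L}(Y))$, as required. Since the composition of functors is a functor, no further verification is needed; all of the Cerf-type coherence is in $\mathcal{M}$ and all of the Floer-theoretic analysis is in $\operatorname{Don}^{\#}$. There is no real obstacle here --- the content of the theorem is bookkeeping, confirming that the monotonicity constant produced by the representation-variety construction is precisely the one consumed by the quilted Floer categorification, and that the relative spin structures and minimal Maslov number $\geq 3$ required for $\operatorname{Symp}^{\#}_\tau$-morphisms are already supplied by Theorem \ref{repTFT}(2) (compare the minimal Maslov index $4$ computation in Theorem \ref{lprop}).
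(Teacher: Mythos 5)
Your proof is correct and takes essentially the same approach the paper does: the paper simply says the result follows "by combining Theorem \ref{repTFT} and the categorification functor from Theorem \ref{catfunc}," i.e., by composing $\operatorname{Don}^{\#} \circ \mathcal{M}$, which is exactly your argument. You have usefully unwound the one point the paper leaves tacit — that the monotonicity constant $\tau = 1/(2r)$ output by Theorem \ref{repTFT} matches the $\tau$ required by the categorification functor, and that the relative spin and minimal Maslov conditions needed for morphisms in $\operatorname{Symp}^{\#}_\tau$ are already supplied by the moduli space construction.
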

%

Note that here and in the remaining content of this chapter, we will fix $r=2,d=1$. Although in its full generality, $\mathcal{FR}$ gives functor-valued invariants of 3-dimensional cobordisms, for the purposes of this thesis we are interested in a particularly simple special case, in which the information in these functors is actually captured by a single abelian group. Recall that by Theorem \ref{modsu2torus}, when $X$ has genus 1 (i.e., $X=T^2$), the moduli space $M(X)$ consists of a single point, and therefore has a unique Lagrangian submanifold ($M(X)$ itself). This suggests that in the context of $\mc{FR}$, the role of closed 3-manifolds is actually played by oriented cobordisms with two torus boundary components, as we have already seen implicitly in Theorem \ref{lprop}. This inspires the following definition:

\begin{defn}[Lagrangian $U(2)$ Floer homology, $HL(M)$]\label{hfu2}
Suppose $M$ is a rank 2, degree 1, decorated cobordism, whose two boundary components are each diffeomorphic to the genus 1 surface $X = T^2$. Let $\operatorname{pt}\in \mc{FR}(X)$ denote the object in $\mc{FR}(X)$ arising from the unique Lagrangian submanifold of $M(X) = \operatorname{pt}$. Then we define the \textbf{Lagrangian $U(2)$ Floer homology HL(M)} to be the abelian group given by the following Hom set:
\begin{align}
HL(M) := \operatorname{Hom}(\mc{FR}(M)\operatorname{pt},\operatorname{pt})
\end{align}
\end{defn}

For us, the important property of the group $HL(M)$ is its similarity to the Lagrangian Floer homology invariants of closed 3-manifolds defined via a Heegaard splitting. Indeed, we can see the relationship more directly as follows. Let $M$ be a (decorated) cobordism from the torus to itself, as in Definition \ref{hfu2}, and let $M_{\pm}$ be a splitting of $M$ by compression bodies, so that each of $M_{\pm}$ is diffeomorphic to a compression body $Y_g$ as in the previous section, with $\partial Y_g = T^2 \cup \Sigma_g$. By Theorem \ref{equal}, $L(M_{\pm}) = \L(M_{\pm}) \subset \mc{R}_g$ is actually a (smooth) Lagrangian submanifold, therefore $\mc{M}(M_{\pm}) = L(M_{\pm})$, and furthermore, the definition of quilted Floer homology reduces to the standard definition. Thus, we have
\begin{lemma}\label{quiltheegaard}
$HL(M) = HF(\L(M_+),\L(M_-)).$
\end{lemma}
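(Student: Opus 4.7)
The plan is to unwind the definition of $HL(M)$ in terms of the Heegaard-type splitting $M = M_- \cup_{\Sigma_g} M_+$, and then use Theorem \ref{equal} together with the triviality of $M(T^2)$ to collapse the resulting generalized correspondence down to a pair of ordinary Lagrangians in $\mc{R}_g$.

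First I would decompose the cobordism. Viewing $M$ as a cobordism from $T^2$ to $T^2$, the splitting $M = M_- \cup_{\Sigma_g} M_+$ realizes $M$ as a composition of two decorated simple-type cobordisms (each a compression body), one going from $T^2$ to $\Sigma_g$ and the other from $\Sigma_g$ to $T^2$. By Theorem \ref{rdFldTheory}, the functor $\mc{FR}(M)$ is (up to natural isomorphism) the composition $\Phi(\ul{L}(M_+)) \circ \Phi(\ul{L}(M_-))$ of the functors assigned to these pieces. Applied to the unique Lagrangian $\text{pt} \subset M(T^2) = \text{pt}$, this yields the generalized Lagrangian submanifold of $M(T^2) = \text{pt}$ with sequence $\bigl(\text{pt},\, L(M_-),\, L(M_+)\bigr)$, where each $L(M_\pm) \subset M(T^2)\times M(\Sigma_g)$ is the connection-theoretic correspondence of Definition \ref{modcobordism}.

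Next I would apply Theorem \ref{equal}: since $M(T^2)$ is a point, the projection of $L(M_\pm)$ onto $M(\Sigma_g)$ is a diffeomorphism onto the classical Lagrangian submanifold $\L(M_\pm) \subset \mc{R}_g$. Consequently the generalized correspondences above reduce to the ordinary Lagrangians $\L(M_-),\L(M_+) \subset \mc{R}_g$, and the seam $M(T^2)$ appearing in the quilted picture is trivial (any pseudo-holomorphic strip into a point is constant, so the corresponding quilted strip just matches boundary values across the seam). At that point the target Hom set $\operatorname{Hom}(\mc{FR}(M)\,\text{pt},\,\text{pt})$ in $\opn{Don}^\#(M(T^2))$ is, by the definition of quilted Floer homology on the sequence $(\text{pt}, \L(M_-), \L(M_+), \text{pt})$, identified with the quilted Floer homology of a pair of Lagrangians $\L(M_-),\L(M_+)$ in the single symplectic manifold $\mc{R}_g$. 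By \cite[\S4]{floerfieldtheoryNew}, when the sequence collapses in this way the quilted theory reproduces the standard Lagrangian Floer homology, giving $HF(\L(M_+),\L(M_-))$.

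The main obstacle I would expect is the last step: making precise the claim that the quilted Floer complex associated to the sequence $(\text{pt},\L(M_-),\L(M_+),\text{pt})$ literally coincides with $CF(\L(M_+),\L(M_-))$. One must check that intersection points, orientations, and the count of holomorphic quilts reduce correctly once the $M(T^2)$-valued seam is discarded, and that monotonicity and the minimal Maslov number $\geq 3$ hypotheses of the monotone symplectic category are satisfied along the way; the first is transparent because holomorphic curves into a point are constant, while the second follows from Theorem \ref{lprop} (minimal Maslov $4$). Putting these pieces together yields the claimed equality $HL(M) = HF(\L(M_+),\L(M_-))$.
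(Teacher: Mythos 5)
Your argument is correct and follows essentially the same route as the paper: split $M$ along $\Sigma_g$ into compression bodies $M_\pm$, invoke Theorem~\ref{equal} (which rests on $M(T^2) = \mathrm{pt}$) to identify $L(M_\pm)$ with the genuine Lagrangians $\L(M_\pm)\subset\mc{R}_g$, and then observe that the quilted Floer homology collapses to ordinary Lagrangian Floer homology. Where the paper simply asserts that ``the definition of quilted Floer homology reduces to the standard definition,'' you helpfully spell out why (curves into a point are constant, monotonicity/Maslov hypotheses are met via Theorem~\ref{lprop}), which is a reasonable elaboration rather than a different argument.
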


\section{Symplectic instanton homology and $(1,1)$ knots}\label{chpt2mainsection}

Now we turn to the original contribution (and main result) of this chapter, where we will use the Lagrangian $U(2)$ Floer homology from Definition \ref{hfu2} to directly define new knot invariants from the symplectic geometry of representation varieties. We will then use a recent (and difficult) result of Ivan Smith to show that these invariants have the same rank as the Heegaard Floer knot homology groups $\widehat{HFK}$ for (1,1) knots in $S^3$ (a simple but interesting class of knots in $S^3$, including all torus knots; see below for a succinct definition). Each of these results requires essentially no new work, beyond that of Smith and Wehrheim-Woodward, and therefore the exposition and proofs of these results will be fairly short.

Let $K$ be a knot in a closed, oriented 3-manifold $M$, and consider the ``sutured manifold'' $(M\backslash \text{tb}(K),s_1,s_2)$ consisting of the complement $M\backslash \text{tb}(K)$ of a tubular neighborhood of $K$, and two sutures $s_1,s_2\subset \partial (M\backslash \text{tb}(K))$, each homeomorphic to an annulus, and with opposite orientation. Attach a thickened annulus $A\times [0,1]$ to $M\backslash \text{tb}(K)$ by gluing $A\times \{0\}$ and $A\times \{1\}$ to $s_1$ and $s_2$, respectively. The resulting manifold, which we call the \textbf{knot closure} $M_K$, and which is canonically associated to $K$, can be viewed as a cobordism $$T^2\stackrel{M_K}\longrightarrow T^2$$ between its two boundary tori. 

Let $E$ denote an appropriate choice of non-trivial $U(2)$ bundle data on $M_K$ to make it into a rank 2, degree 1 decorated cobordism (see Definition \ref{decoratedcobordism}), which we denote by $M_K^E$. When $Y=S^3$, there is a unique choice of $E$ up to diffeomorphism (essentially by Alexander duality, since the diffeomorphism types of bundles in this case is determined by characteristic class data, i.e. by the cohomology), and we denote the corresponding decorated cobordism by $S^3_K$. We can apply the Floer homology group invariant from Definition \ref{hfu2} directly to these decorated cobordisms:

\begin{defn}[symplectic instanton knot homology]\label{maindef1} For a knot $K\subset S^3$, define 
$$\mc{S}(K) := HL(S_K^3),$$ 
the \textbf{symplectic instanton homology} of $K$.

More generally, for a knot $K\subset Y$ and bundle data $E$, define 
$$\mc{S}(K,E) := HL(M_K^E),$$ 
the \textbf{symplectic instanton homology} of the pair $(K,E)$. 
\end{defn}

In each case, $\mc{S}(K)$ is naturally a finitely generated $\Z$-module. Furthermore, the relative spin structures discussed in the preceding sections actually provide $\mc{S}(K)$ with a relative $\Z/4 \Z$ grading, but we will not discuss or investigate this grading any further (in this thesis). 

Our goal now is to prove:

\begin{theorem}[main theorem, chapter 2]\label{main1} For all $(1,1)$ knots $K\subset S^3$, the ranks of $\mc{S}(K)\otimes \C$ and $\HFK(K)\otimes \C$ are equal. 
\end{theorem}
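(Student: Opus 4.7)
The plan is to use the $(1,1)$ splitting of the knot $K$ to present $S^3_K$ in a very explicit way as a composition of two compression bodies meeting along a genus $2$ surface, then apply Lemma \ref{quiltheegaard} to reduce $\mc{S}(K)$ to an ordinary Lagrangian Floer homology in the $6$-dimensional space $\mc{R}_2$, and finally invoke Smith's theorem to identify this with $\HFK(K)\otimes \C$. No genuinely new analytic work is needed beyond the references already cited.

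First I would set up the Heegaard-type decomposition. By definition of a $(1,1)$ knot, $S^3$ admits a genus $1$ Heegaard splitting $S^3=V_+\cup_{T^2}V_-$ in which $K$ meets each solid torus $V_\pm$ in a single boundary-parallel arc. Removing a tubular neighborhood of $K$ and attaching the thickened annulus that converts sutures to tori (as in the construction of $M_K$) yields a splitting $S^3_K=M_+\cup_{\Sigma_2}M_-$, where each $M_\pm$ is a compression body from $T^2$ to $\Sigma_2$ obtained by attaching a single $2$-handle to $\Sigma_2\times[0,1]$ along a simple closed curve $\alpha_\pm\subset\Sigma_2$. Theorems \ref{equal} and \ref{lprop} then show that each $L(M_\pm)\subset M(T^2)\times M(\Sigma_2)$ agrees with a simply-connected monotone Lagrangian $\L(M_\pm)\subset\mc{R}_2$ of dimension $3g-3=3$ (so a Lagrangian $3$-sphere). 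Applying Lemma \ref{quiltheegaard}, I get the key identification
\[
\mc{S}(K)\;=\;HL(S^3_K)\;=\;HF\bigl(\L(M_+),\,\L(M_-)\bigr)\subset\rg \qquad (g=2).
\]

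Next I would bring in Smith's theorem. Smith's work on the moduli space $\mc{R}_2$ of twisted $SU(2)$ representations on $\Sigma_2$ (the genus $2$ version of his pencils-of-quadrics program) provides, after tensoring with $\C$, an identification of the relevant portion of the Fukaya category of $\mc{R}_2$ with the Fukaya category of the symmetric product $\opn{Sym}^2(\Sigma_2)$, under which each $\L(M_\pm)$ is matched with the Heegaard Lagrangian torus $\T_{\alpha_\pm}\subset\opn{Sym}^2(\Sigma_2)$ determined by the same attaching curve on $\Sigma_2$. Combining with the previous step gives
\[
\mc{S}(K)\otimes\C\;\cong\;HF\bigl(\T_{\alpha_+},\T_{\alpha_-}\bigr)\otimes\C,
\]
where the right-hand side is computed in $\opn{Sym}^2(\Sigma_2)$. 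Because $(M_+,M_-)$ is the genus $2$ Heegaard splitting of $S^3_K$ coming from the $(1,1)$ structure, the same pair of attaching curves, together with the two basepoints encoding the knot $K$, is exactly a genus $2$ doubly-pointed Heegaard diagram for $(S^3,K)$; hence by Ozsv\'ath--Szab\'o the right-hand side is $\HFK(K)\otimes\C$.

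The main obstacle is the middle step: pinning down Smith's symplectomorphism precisely enough to guarantee that the specific pair of Lagrangians $(\L(M_+),\L(M_-))$ produced by the $(1,1)$ decomposition is carried to a pair Hamiltonian isotopic to $(\T_{\alpha_+},\T_{\alpha_-})$, and that no quantum or obstruction terms obstruct equating the two Floer groups after $\otimes\C$. This is exactly the content of Smith's theorem, so I would spend the bulk of the exposition making the dictionary between the two sides explicit (choice of degree-$1$ bundle data on $S^3_K$ matching the spin$^c$ structure, compatibility of the basepoint/suture data with the two marked points on $\Sigma_2$, and monotonicity/minimal Maslov number checks coming from Theorem \ref{lprop}) so that the comparison of ranks follows formally.
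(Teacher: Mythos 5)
Your setup is correct and matches the paper's: use the genus-$1$ doubly-pointed Heegaard splitting of $(S^3,K)$, build the compression Heegaard surface $\Sigma_2$ in $M_K$, and apply Lemma~\ref{quiltheegaard} to get $\mc{S}(K)\cong HF(\L(M_+),\L(M_-))$ with each $\L(M_\pm)$ a monotone Lagrangian $3$-sphere in the $6$-dimensional $\mc{R}_2$. The problem is in how you invoke Smith, and this is a genuine gap, not a harmless rephrasing.

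Smith's theorem is \emph{not} an equivalence between (a summand of) the Fukaya category of $\mc{R}_2$ and the Fukaya category of $\opn{Sym}^2(\Sigma_2)$. It is an equivalence
\[
D^\pi\mc{F}(\Sigma_2)\;\cong\;D^\pi\mc{F}(\mc{R}_2;0),
\]
i.e.\ it compares $\mc{R}_2$ with the Fukaya category of the \emph{genus-$2$ surface itself}, sending a simple closed curve $\alpha\subset\Sigma_2$ to the Lagrangian $\L(\{\alpha\})\subset\mc{R}_2$. In fact $\opn{Sym}^2(\Sigma_2)$ never enters: for a $(1,1)$ knot the doubly-pointed Heegaard diagram has genus $1$, so $\HFK(K)$ is computed in $\opn{Sym}^1(\Sigma'_1)=\Sigma'_1$, that is, by intersecting two curves on a punctured torus, not by tori in a symmetric product. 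The ``Heegaard Lagrangian torus $\T_{\alpha_\pm}\subset\opn{Sym}^2(\Sigma_2)$'' you describe does not exist in this situation (there is only one $\alpha$- and one $\beta$-curve, so the relevant product is $\opn{Sym}^1$, and the $\Sigma_2$ in play is the compression surface in $M_K$, not a Heegaard surface for $S^3$).

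Because you route the Fukaya-category comparison through a map that Smith doesn't provide, you also skip the step that the paper actually needs to close the loop: identifying $\HFK(K)$ with $HF(\alpha,\beta)$ computed \emph{inside the surface} $\Sigma_2$. The paper handles this using the result of Abouzaid (\cite{abouzaid2008fukaya}) that the rank of $HF(\alpha,\beta)$ for two curves on a surface equals their minimal geometric intersection number, together with the observation that this intersection number in $\Sigma_2$ agrees with the minimal intersection number in $\Sigma'_1\setminus\{z,w\}$, which is known to compute the rank of $\HFK(K)$ for a $(1,1)$ knot. Without that bridge, or with the $\opn{Sym}^2(\Sigma_2)$ version of Smith that doesn't exist, the chain from $\mc{S}(K)$ to $\HFK(K)$ is broken. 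Once you restate Smith correctly as an equivalence with $\mc{F}(\Sigma_2)$, supply the Abouzaid step, and drop the symmetric product, your argument becomes essentially the one in the paper.
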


We note that it should be straightforward to extend this result to cover all $(1,1)$ knots in lens spaces, but we will not pursue this case here. 

\subsection{Proof of the main theorem}

In this section we restrict to the case that $M=M_K$ for a knot $K\subset S^3$. 

Suppose $\Sigma'_g$ is a \textbf{doubly-pointed Heegaard surface for $K$}, defined to be a Heegaard surface (see Definition \ref{heegaard}) for $S^3$ which intersects $K$ transversely in two points, and splits it into two unknotted arcs. Let $\Sigma''_g$ denote the intersection of $\Sigma'_g$ and the knot complement $M\backslash \text{tb}(K)$, so that it is homeomorphic to a genus $g$ surface with two disks removed. We can arrange for the sutures $s_1,s_2 \subset \partial (M\backslash \text{tb}(K))$ (from the definition of $M_K$) to be given by the intersection of $\partial (M\backslash \text{tb}(K))$ with a small collar neighborhood of $\Sigma''_g$. 

Recall that $M_K$ is constructed by partially gluing $A\times [0,1]$ to $M\backslash \text{tb}(K)$ along these sutures. Writing the annulus $A$ as $S^1\times [0,1]$, take a center circle $$\beta=S^1\times \{1/2\} \subset A = S^1\times [0,1],$$ and construct a closed genus $g+1$ surface $\Sigma_{g+1}$ in $M_K$ by gluing $\beta\times [0,1]\subset A\times [0,1]$ to $\Sigma''_g$ along its boundary (i.e., attaching a handle). Then we have

\begin{proposition}\label{heegaardequal}
If $\Sigma'_g$ is a Heegaard surface for $K$, then $\Sigma_{g+1}$ splits $M_K$ into two compression bodies $M_\pm$, with $\partial M_\pm = T^2 \cup \Sigma_{g+1}$. We will call such a $\Sigma_{g+1}$ a \textbf{compression Heegaard surface} for $M_K$.
\end{proposition}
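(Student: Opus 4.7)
The plan is to locate $\Sigma_{g+1}$ inside $M_K$ as a separating surface and then verify that each of the two resulting pieces is a compression body.

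By construction, $\Sigma'_g$ is a Heegaard surface for $S^3$, so $S^3=H_+\cup_{\Sigma'_g}H_-$ for handlebodies $H_\pm$, with $K$ meeting each $H_\pm$ in a single unknotted properly embedded arc $\alpha_\pm$. Inside $M\setminus\text{tb}(K)$, the subsurface $\Sigma''_g$ separates this manifold into $H_\pm\setminus N(\alpha_\pm)$, while the annulus $\beta\times[0,1]$ separates the thickened annulus $A\times[0,1]$ into two halves, each homeomorphic to a solid torus. After the suture-gluing that defines $M_K$, these pieces assemble into two regions $M_\pm$ in $M_K\setminus\Sigma_{g+1}$: each $M_\pm$ contains $H_\pm\setminus N(\alpha_\pm)$ together with one of the half-thickened-annuli. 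A direct inspection of which boundary circles fit together shows that $\partial M_\pm$ consists of $\Sigma_{g+1}$ together with exactly one of the two boundary tori of $M_K$.

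By symmetry, it suffices to show $M_+$ is a compression body. Since $\alpha_+$ is unknotted in $H_+$, the complement $H_+\setminus N(\alpha_+)$ is itself a handlebody, now of genus $g+1$. Choose $g$ disjoint compressing disks $D_1,\dots,D_g$ for $H_+$ whose boundaries $a_1,\dots,a_g\subset\Sigma'_g$ can be taken disjoint from $\alpha_+\cap\Sigma'_g$; these descend to disjoint non-separating curves on $\Sigma_{g+1}$, and the $D_i$ are properly embedded in $M_+$ with boundary on $\Sigma_{g+1}$. The key geometric claim is that cutting $M_+$ along $D_1,\dots,D_g$ produces a manifold homeomorphic to $T^2\times I$, whose two boundary tori are the torus boundary of $M_+$ and the torus obtained from $\Sigma_{g+1}$ by compressing along the $a_i$. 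Once this is established, reattaching $g$ dual $1$-handles to recover $M_+$ exhibits it as a torus cross interval with $g$ $1$-handles attached on one side, which is precisely a compression body from the torus boundary to $\Sigma_{g+1}$.

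To verify the key claim, observe that cutting $H_+\setminus N(\alpha_+)$ along the $D_i$ reduces it to $B^3\setminus N(\alpha_+)\cong (D^2\setminus D^2_\epsilon)\times I$, a solid torus. The cut version of $M_+$ is therefore this solid torus glued along the two suture-annuli to the other solid torus $S^1\times[1/2,1]\times[0,1]$ coming from the thickened annulus. Both pieces carry natural free $S^1$-actions---rotating the disk factor of $(D^2\setminus D^2_\epsilon)\times I$ on one side, and rotating the $S^1$-factor on the other---and the suture-gluing, chosen in the natural way, is equivariant for these actions, so the cut manifold inherits a free $S^1$-action. Its quotient is a union of two rectangles glued along segments of one edge, which is combinatorially an annulus; since $H^2(\text{annulus};\mathbb{Z})=0$, every principal $S^1$-bundle over an annulus is trivial, so the cut manifold is $S^1\times\text{annulus}\cong T^2\times I$. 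Tracing the two boundary loops of the quotient annulus then confirms that the two boundary tori of the cut manifold are, as needed, the torus boundary of $M_+$ and the capped-off $\Sigma_{g+1}$. The main obstacle is this last step---specifically checking $S^1$-equivariance of the suture-gluing and correctly identifying which boundary loop corresponds to which boundary torus. Once this is verified, the compression body structure of $M_+$ is immediate, and the identical argument using compressing disks for $H_-$ handles $M_-$.
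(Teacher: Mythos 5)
Your approach is genuinely different from the paper's, and considerably more elaborate. The paper's proof is essentially a one-move Morse theory argument: take a Morse function $f: S^3 \to [0,1]$ whose middle level set is $\Sigma'_g$, restrict it to $M\setminus\text{tb}(K)$ (removing the index-$0$ and index-$3$ critical points, which lie on the unknotted arcs $\alpha_\pm$ and hence inside $\text{tb}(K)$), and extend it over $A\times[0,1]=S^1\times[0,1]\times[0,1]$ by projection onto the middle factor. No new critical points appear, so each $M_\pm$ inherits a boundary-to-boundary Morse function with critical points of a single index, which is precisely the definition of a compression body used in \S\ref{handlesection}. You instead identify the two sides of $\Sigma_{g+1}$ explicitly and then verify the compression body structure by hand, cutting along $g$ compressing disks and trying to recognize the cut manifold as $T^2\times I$.

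The structure of your argument is plausible, and the setup (half-thickened-annulus glued to $H_\pm\setminus N(\alpha_\pm)$, $H_\pm\setminus N(\alpha_\pm)$ a genus $g+1$ handlebody because $\alpha_\pm$ is trivial, cutting to $B^3\setminus N(\alpha_\pm)$) is sound. But the step you flag yourself as ``the main obstacle'' is a genuine gap, not a routine fill-in. You need the half-suture annuli on $\partial\big(B^3\setminus N(\alpha_+)\big)$ to be unions of orbit circles for the rotation action on $(D^2\setminus D^2_\epsilon)\times I$, and for the gluing map to the half-thickened-annulus to be orbit-preserving. This forces a particular identification $B^3\setminus N(\alpha_+)\cong(D^2\setminus D^2_\epsilon)\times I$ compatible with the meridional coordinates near $\partial N(\alpha_+)$, and you never fix such coordinates. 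You also need to exclude the possibility that the quotient surface is a Möbius band rather than an annulus (both have vanishing $H^2$, so the cohomological argument alone doesn't distinguish them); this can be done by counting boundary tori of the cut manifold, but you don't do so. None of this is fatal, but as written the key claim is asserted rather than proved. The Morse-theoretic route bypasses all of it and makes essential use, in a clean way, of the same unknotted-arcs hypothesis your argument leans on; it is worth having in your toolkit.
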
 

\begin{proof} Let $f: S^3 \to [0,1]$ be a Morse function whose level set $f^{-1}(1/2)$ is equal to $\Sigma'_g\subset S^3$; therefore, $f^{-1}([0,1/2])$ and $f^{-1}([1/2,1])$ are each handlebodies. The restriction of this Morse function to $M\backslash \text{tb}(K)$ can be extended to $M_\pm$ by taking the function on $A\times [0,1] = S^1 \times [0,1] \times [0,1]$ given by projection onto the second factor. Therefore the critical points are unchanged, proving that $M_\pm$ are compression bodies. 
\end{proof}

The group $\HFK(K)$ is constructed from a doubly-pointed Heegaard \textbf{\textit{diagram}} on $\Sigma'_g$. By definition, this is a tuple $(\Sigma'_g ,\{\alpha_i\}, \{\beta_i\},z,w)$, where $\{z,w\}=K\cap \Sigma'_g$ are basepoints in $\Sigma'_g$, and $\{\alpha_i\}$ and $\{\beta_i\}$ are attaching curves for the handlebodies on either side of $\Sigma'_g$ (as in the proof of Lemma \ref{ell}). In this context, $\{\alpha_i\}$ and $\{\beta_i\}$ are called \textbf{Heegaard curves} for the knot $K$ (with respect to the Heegaard surface $\Sigma'_g$). The Heegaard curves are also required to be disjoint from $z$ and $w$; therefore, they naturally restrict to the complement in $\Sigma'_g$ of small neighborhoods of $z$ and $w$. Furthermore, the proof of Proposition \ref{heegaardequal} shows that the curves $\{\alpha_i\}, \{\beta_i\}$ are also attaching curves in $\Sigma_{g+1}\subset M_K$, for the two compression bodies $M_{\pm}$. 

Expanding on Lemma \ref{attach}, for any set of simple closed curves $\{\alpha_i\}\subset \Sigma_{g+1}$, let $\L(\{\alpha_i\})\subset \mc{R}_{g+1}$ denote those representations in $\mc{R}_{g+1}$ which send the conjugacy class of each $\alpha_i$ to $\mathbb{I}\subset SU(2)$. Then, combining the results at the end of $\S \ref{modsu2section}$ with Theorem \ref{quiltheegaard}, we find that 

\begin{theorem}\label{skab} Fix a knot $K\subset S^3$. Then, for any Heegaard curves $\{\alpha_i\}$ and $\{\beta_i\}$ for $K$ (with respect to some doubly-pointed Heegaard diagram), we have 
$$ \mc{S}(K) \cong HF(\L(\{\alpha_i\}),\L(\{\beta_i\}))$$
\end{theorem}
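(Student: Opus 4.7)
The plan is to unwind the definitions and assemble the structural results already established. To begin, by Definition \ref{maindef1}, $\mc{S}(K) = HL(S^3_K)$, where $S^3_K$ is the knot closure viewed as a decorated cobordism $T^2 \to T^2$ carrying the canonical rank $2$, degree $1$ bundle data. So the task is to compute $HL(S^3_K)$ in terms of the Heegaard curves $\{\alpha_i\}, \{\beta_i\}$.

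The first step is to produce a splitting of $S^3_K$ by compression bodies. For this I apply Proposition \ref{heegaardequal}: given the doubly-pointed Heegaard surface $\Sigma'_g$ for $K$, the construction in the paragraph preceding that proposition produces a closed surface $\Sigma_{g+1}\subset M_K$ cutting $M_K$ into compression bodies $M_{\pm}$ with $\partial M_{\pm} = T^2 \cup \Sigma_{g+1}$. This reduces the problem to computing the Floer homology invariant associated to such a splitting.

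The second step is to convert $HL(S^3_K)$ into ordinary Lagrangian Floer homology. Each $M_{\pm}$ is a compression body from $T^2$ to $\Sigma_{g+1}$, so Theorem \ref{equal} identifies the connection moduli space $L(M_{\pm})$ with a genuine Lagrangian submanifold $\L(M_{\pm}) \subset \mc{R}_{g+1}$ (the torus boundary contributes only a point by Theorem \ref{modsu2torus}). Consequently the quilted Floer homology entering Definition \ref{hfu2} collapses, and Lemma \ref{quiltheegaard} yields
$$HL(S^3_K) \;=\; HF\bigl(\L(M_+),\,\L(M_-)\bigr).$$

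The final step is to identify $\L(M_{\pm})$ with $\L(\{\alpha_i\})$ and $\L(\{\beta_i\})$. The discussion immediately preceding the theorem notes that the Heegaard curves $\{\alpha_i\}$ and $\{\beta_i\}$ from the doubly-pointed diagram, originally attaching curves for the two genus-$g$ handlebodies of $S^3$, remain attaching curves for $M_+$ and $M_-$ inside $\Sigma_{g+1}$. Granting this, Lemma \ref{attach} gives $\L(M_+) = \L(\{\alpha_i\})$ and $\L(M_-) = \L(\{\beta_i\})$, completing the chain of identifications. The main point requiring care, and really the only nontrivial check, is this last identification: one must verify that the handle added to $\Sigma''_g$ (the annular tube around $\beta$ in $A \times [0,1]$) does not force any additional compression curves on either side. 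This is precisely what the Morse-theoretic argument in the proof of Proposition \ref{heegaardequal} accomplishes, by extending the Morse function on $S^3$ across the added annulus via projection to the $[0,1]$ factor, so that the critical points of $f$ on $M_{\pm}$ are exactly those of $f|_{f^{-1}([0,1/2])}$ and $f|_{f^{-1}([1/2,1])}$, whose descending disks are bounded by $\{\alpha_i\}$ and $\{\beta_i\}$. Assembling the three steps yields the claimed isomorphism.
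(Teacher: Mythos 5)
Your proof is correct and follows essentially the same route the paper takes: unwind Definition \ref{maindef1} to $HL(S^3_K)$, use Proposition \ref{heegaardequal} to produce the compression-body splitting along $\Sigma_{g+1}$, apply Theorem \ref{equal} and Lemma \ref{quiltheegaard} to reduce to ordinary Lagrangian Floer homology, and invoke Lemma \ref{attach} together with the observation that $\{\alpha_i\},\{\beta_i\}$ remain attaching curves in $\Sigma_{g+1}$. The paper itself leaves this as an implicit ``combining the results'' statement, and you have correctly identified the chain of lemmas that assemble into the proof, including flagging the Morse-theoretic extension as the one step requiring care.
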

%
%
%
%

We restrict now to the case where $K$ is a (1,1) knot. By definition, this means we can choose a genus 1 doubly-pointed Heegaard diagram $(\Sigma'_1 ,\alpha, \beta,z,w)$ for $K$ (note that in this case, we have only one $\alpha$ and one $\beta$ curve). Let $(\Sigma_2 ,\alpha,\beta)$ be the corresponding compression Heegaard diagram for $M_K$ (generalizing the terminology from Proposition \ref{heegaardequal}). If we choose an area form on $\Sigma_2$, it becomes a symplectic manifold, and since $\alpha$ and $\beta$ are smooth, simple closed curves, they give embedded Lagrangian submanifolds in $\Sigma_2$. Furthermore, it is proved in \cite{abouzaid2008fukaya}, for example, that the Lagrangian Floer homology group $HF(\alpha,\beta)$ is well-defined, and that its rank is equal to the minimal geometric intersection number between any curves isotopic to $\alpha$ and $\beta$ in $\Sigma_2$. This intersection number agrees with the geometric intersection number of $\alpha$ and $\beta$ when viewed as curves in $\Sigma'_1\backslash\{z,w\}$, which is known to give the rank of $\HFK(K)$ (see \cite[$\S 6$]{ozsvath2004holomorphicKnotInvariants}). We conclude that 
$$\HFK(K) \cong HF(\alpha,\beta)$$
Therefore, combined with Theorem \ref{skab}, the following theorem implies Theorem \ref{main1} (Smith's result is where all of the deep mathematics lies):

\begin{theorem}[Smith, \cite{smith2010floer}] $HF(\alpha,\beta)$ is isomorphic to $HF(\L(\{\alpha\}),\L(\{\beta\}))$. Note that Smith's result concerns Floer homology groups defined over $\C$, so the theorem only holds with $\C$ coefficients. 

\end{theorem}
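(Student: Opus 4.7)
The plan is to realize both sides of the claimed isomorphism as Floer homology of matching Lagrangians in a common Lefschetz fibration, at which point the equality follows from the matching-cycle machinery of Seidel and Smith. I would proceed in three geometric steps, with a final appeal to general symplectic technology.

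First, I would invoke the classical Narasimhan--Ramanan description of $\mc{R}_2$, in genus $2$, as the smooth intersection of two quadrics $Q_1 \cap Q_2 \subset \mb{P}^5$, a Fano threefold. The pencil $sQ_1+tQ_2$ has exactly six singular members, in natural bijection with the six Weierstrass points of the hyperelliptic genus $2$ curve $\Sigma_2$. Blowing up the base locus of this pencil yields a Lefschetz fibration $\pi: \wh{\mc{R}}_2 \to \mb{P}^1$ with six critical values; this algebro-geometric picture is the engine behind everything that follows.

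Second, I would use the hyperelliptic structure to identify the generic symplectic fiber of $\pi$ with a Prym-theoretic variant of $\Sym^2(\Sigma_2)$. Parallel transport then makes a simple closed curve $\alpha \subset \Sigma_2$ appear as an arc in $\mb{P}^1$ joining two critical values of $\pi$, and the Lagrangian $\L(\{\alpha\})$ from Definition \ref{replag} becomes the matching $3$-sphere obtained by gluing the two vanishing cycles above the endpoints of this arc. Exhibiting this identification, both for $\alpha$ and for $\beta$, is the geometric heart of the argument.

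Third, having realized $\L(\{\alpha\})$ and $\L(\{\beta\})$ as matching spheres over arcs determined by $\alpha$ and $\beta$, I would apply the general matching-cycle theorem: the Floer homology of two matching spheres in the total space of a Lefschetz fibration is computed, via a spectral sequence, by the Floer homology of the underlying matching paths in the base, which in turn recovers the surface Floer homology $HF(\alpha,\beta)$ on $\Sigma_2$. Combining these steps yields the desired $HF(\L(\{\alpha\}),\L(\{\beta\})) \cong HF(\alpha,\beta)$ with $\C$ coefficients.

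The principal obstacle is the middle step: verifying that the representation-theoretic Lagrangian $\{[\rho] \in \mc{R}_2 : \rho(\alpha) = \mathbb{I}\}$ coincides, as a symplectic submanifold of $Q_1 \cap Q_2$, with a matching $3$-sphere over a specific arc determined by $\alpha$. This requires tracing the Narasimhan--Ramanan bijection explicitly and matching the vanishing cycles of $\pi$ with those of the hyperelliptic pencil on $\Sigma_2$, and it is here that the restriction to $\C$ coefficients becomes essential, since the matching-cycle spectral sequence and the relevant orientation/spin data are only well-controlled after tensoring with $\C$.
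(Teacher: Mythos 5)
Your proposal tries to re-derive the geometry underlying Smith's result, whereas the paper treats Smith's Theorem 1.1 as a black box. The paper's proof is short: Smith's theorem gives a $\C$-linear equivalence of $\Z/2$-graded split-closed triangulated categories $\mc{Y}: D^\pi\mc{F}(\Sigma_2) \to D^\pi\mc{F}(\mc{R}_2;0)$; Smith also shows that $\mc{Y}$ sends the object $[\alpha]$ to $[\L(\{\alpha\})]$ for any simple closed curve $\alpha \subset \Sigma_2$; the $\opn{Hom}$-complexes in these derived Fukaya categories compute the respective Floer homologies; and an equivalence of categories induces quasi-isomorphisms on $\opn{Hom}$s. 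That is the whole argument --- essentially category theory applied to Smith's statement.

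Your three geometric steps are, roughly, the scaffolding that Smith himself erects to \emph{prove} his equivalence: the Narasimhan--Ramanan model $\mc{R}_2 \cong Q_1 \cap Q_2 \subset \mb{P}^5$, the pencil of quadrics with six singular members dual to the Weierstrass points, and the realization of the Lagrangians $\L(\{\alpha\})$ as vanishing/matching spheres. But as a freestanding proof there is a real gap at step three. There is no off-the-shelf ``matching-cycle theorem'' that lets you compute $HF(\L(\{\alpha\}),\L(\{\beta\}))$ from ``Floer homology of the underlying matching paths in the base'' and then declare this equal to $HF(\alpha,\beta)$ on $\Sigma_2$. Matching paths live in the base $\mb{P}^1$, not on $\Sigma_2$; the hyperelliptic double cover $\Sigma_2 \to \mb{P}^1$ does not identify intersection numbers of arcs in $\mb{P}^1$ with intersection numbers of the lifted curves on $\Sigma_2$; and controlling the bubbling/holomorphic-disk contributions and the $A_\infty$-level obstructions needed to compare these Floer theories is precisely the hard analytic and categorical content of Smith's paper. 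So either your sketch silently outsources all of that work back to Smith (in which case you may as well cite Theorem 1.1 directly, as the paper does), or it is claiming a reduction that does not hold as stated. The paper's route --- cite the categorical equivalence and the computation $\mc{Y}([\alpha]) = [\L(\{\alpha\})]$, then apply functoriality of $\opn{Hom}$ --- is both simpler and logically airtight.
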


\begin{proof} As a corollary of Theorem 1.1 in \cite{smith2010floer}, there is ``a $\C$-linear equivalence of $\Z/2\Z$-graded split-closed triangulated categories'' 

\begin{equation}\label{equiv}\mc{Y}:D^\pi \mc{F}(\Sigma_2)\cong D^\pi \mc{F}(\mc{R}_2;0) \end{equation}

However, the precise definition of these terms will not be necessary for the proof of the theorem. Above, $\mc{F}(\Sigma_2)$ denotes the balanced Fukaya category of $\Sigma_2$, and $\mc{F}(\mc{R}_2;0) $ denotes a certain orthogonal summand of the monotone Fukaya category of $\mc{R}_2$. For an $A_\infty$ category $\mc{C}$, such as $\mc{F}(\Sigma_2)$ or $\mc{F}(\mc{R}_2;0) $, $D^\pi\mc{C}$ denotes the ``cohomological category $H(T w^\pi \mc{C})$ underlying the split-closure of the category of twisted complexes of $\mc{C}$.'' 

The important thing for us is simply the following. As shown by Smith in \cite{smith2010floer}, the Lagrangians $\alpha,\beta\subset \Sigma_2$ give rise to objects $$[\alpha],[\beta]\in \opn{Ob}(D^\pi \mc{F}(\Sigma_2)),$$ and the Lagrangians $\L(\{\alpha\}),\L(\{\beta\}) \subset \mc{R}_2$ give rise to objects $$[\L(\{\alpha\})],[\L(\{\beta\})]\in \opn{Ob}(D^\pi \mc{F}(\mc{R}_2;0)).$$ Furthermore, $\opn{Hom}([\alpha],[\beta])$ is a chain complex whose homology is isomorphic to $HF(\alpha,\beta)$, and $\opn{Hom}([\L(\{\alpha\})],[\L(\{\beta\})])$ is a chain complex whose homology is isomorphic to $HF(\L(\{\alpha\}),\L(\{\beta\}))$. Finally, for any simple closed curve $\alpha\subset \Sigma_2$, Smith shows that $$\mc{Y}([\alpha]) = [\L(\{\alpha\})] \in  \opn{Ob}(D^\pi \mc{F}(\mc{R}_2;0))$$ Thus we have $\C$-linear maps $$ \mc{Y}: \opn{Hom}([\alpha],[\beta]) \to \opn{Hom}([\L(\{\alpha\})],[\L(\{\beta\})])$$ Since $\mc{Y}$ is an equivalence of categories, these maps are chain homotopy equivalences, implying the theorem. \end{proof}

\begin{rem}
In \cite{smith2010floer}, Smith already observes that his result proves the simplest non-trivial case (that is, genus = 2) of a 2-dimensional generalization of Seiberg-Witten = Donaldson. But, as he points out, the only closed 3-manifolds to which this special case can be applied (in order to identify Heegaard Floer and symplectic instanton invariants) are those admitting a genus 1 Heegaard splitting, i.e., lens spaces, for which the invariants are somewhat trivial. Our contribution is simply to shift focus from closed 3-manifolds to knots. 
\end{rem}

\chapter{A topological formula for the Jones polynomial of two-bridge knots}

In this chapter, we prove a somewhat strange result, relating the Jones polynomial of 2-bridge knots to two sets of geometric invariants, the Atiyah-Patodi-Singer $\rho$-invariants and Ozsv\'ath-Szab\'o $d$-invariants. In fact, we believe this result should be interpretable in terms of an additional $\Z$-grading on the symplectic instanton knot homology $\mc{S}(K)$ defined in the preceding chapter. This interpretation is motivated by the relation between $\mc{R}_g$, the moduli space of twisted $SU(2)$ representations on a genus $g$ surface (and one of the key ingredients for constructing $\mc{S}(K)$), and Chern-Simons theory (and therefore the Jones polynomial). We have developed a strategy to find this grading, using ideas similar to those in \cite{jeffrey1993half}, but it has not yet been implemented. 

\section{Formula for the Jones polynomial of 2-bridge knots} Let $K=K(p,q)$ be a 2-bridge link, with branched double cover the lens space $L(p,q)$. Let $\opn{Spin}^c(p,q)$ denote the set of conjugacy classes of $\opn{Spin}^{c}$ structures on $L(p,q)$, and let $\mathcal{M}(p,q)$ denote the set of conjugacy classes of $SU(2)$ representations of $\pi_{1}(L(p,q))$ (alternatively, we write $\opn{Spin}^c(K)$ for $\opn{Spin}^c(p,q)$ if $p$ and $q$ are unspecified). In what follows, we will construct a certain map $\iota: \opn{Spin}^c(p,q) \to \mathcal{M}(p,q)$. Let $$I(\mfs):=8d(\mfs)- \rho(\iota(\mfs))$$ where $\mfs \in \opn{Spin}^c(p,q)$, $d(\mathfrak{s})\in \Q$ denotes the Ozsv\'ath-Szabo $d$-invariant, and $\rho(\alpha)\in\Q$ denotes the Atiyah-Patodi-Singer $\rho$-invariant of a representation $\alpha$. Note that because $\pi_{1}(L(p,q))$ is abelian, $\mathcal{M}(p,q)$ is the same as the set of characters of $H_{1}(L(p,q))$ up to conjugation, and $\rho(\alpha)$ agrees with the corresponding Casson-Gordon knot invariant. It turns out that $I(\mfs)$ is an integer, for all $\mfs$. 

Fix an orientation on $K$, and let $\mathcal{O}$ be the set of relative orientations for the underlying unoriented link. If $K$ is a knot, then set $K^{o_{1}}=K$ for the unique $o_{1}\in\mathcal{O}$. If $K$ is a 2-component link, then we set $K^{o_{1}}=K$ and $K^{o_{2}}$ is $K$ with the orientation on one component reversed (it will not matter which component we choose, as we only care about this new link through its signature, which is unaffected by this choice). Indeed, either way we have$$\sigma(K^{o_{2}})=\sigma(K)+2lk(K)$$ where $\sigma(K)$ is the knot signature and $lk$ is the linking number

Let $J(K)$ denote the Jones polynomial of $K$ (to fix convention, we mean the particular Jones polynomial which is the graded Euler characteristic of reduced Khovanov homology). Our purpose here is to prove the following theorem (note that in our notation, $i=\sqrt{-1}$):

\begin{theorem}[main theorem, chapter 3]\label{main2} 
$$i^{-\sigma(K)}q^{3\sigma(K)}J(K)= \sum_{o\in\mathcal{O}}(iq)^{2\sigma(K^{o})} +\left(q^{-1}-q^{1}\right)\sum_{\mfs\in\opn{Spin}^c(K)}(iq)^{I(\mfs)}$$
\end{theorem}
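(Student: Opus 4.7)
The plan is to reduce both sides of the identity to explicit closed-form expressions indexed by the continued fraction expansion of $q/p$, and then verify equality by induction on the length of that expansion. Throughout, I use that $K(p,q)$ is alternating (indeed a rational tangle closure), which gives very tight control on each ingredient.

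First I would unpack the left-hand side. For a 2-bridge knot $K(p,q)$, the Jones polynomial has an explicit closed form obtained by iterating the Kauffman-bracket skein relation on its rational tangle presentation, or equivalently by evaluating a matrix product indexed by the continued fraction $q/p = [a_1,\ldots,a_n]$. Paired with the classical Gordon--Litherland continued-fraction formula for $\sigma(K(p,q))$ (essentially a signed count of the $a_i$), this makes the entire quantity $i^{-\sigma(K)}q^{3\sigma(K)}J(K)$ an explicit Laurent polynomial in $q$.

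Second I would unpack the right-hand side. The finite sum $\sum_{o\in\mathcal{O}}(iq)^{2\sigma(K^o)}$ has at most two terms and is immediately explicit. For the $\opn{Spin}^c$ sum, I would invoke Ozsv\'ath--Szab\'o's recursion for lens-space $d$-invariants,
\begin{equation*}
d(L(p,q),i) \;=\; -\tfrac{1}{4} + \tfrac{(2i+1-p-q)^2}{4pq} - d\bigl(L(q,\,p \bmod q),\,j\bigr),
\end{equation*}
which unwinds against the continued fraction of $q/p$. The Atiyah--Patodi--Singer $\rho$-invariants for abelian characters of $\pi_1(L(p,q))$ reduce (via Casson--Gordon) to explicit cotangent sums of Dedekind type, which satisfy a parallel Euclidean-algorithm reciprocity in $(p,q)$. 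Thus each $I(\mfs)=8d(\mfs)-\rho(\iota(\mfs))$ is computable; as part of this step I would pin down the map $\iota:\opn{Spin}^c(p,q)\to\mathcal{M}(p,q)$ as the affine identification coming from a canonical choice of spin structure on $L(p,q)$, which is what should force $I(\mfs)\in\Z$.

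Third I would verify the identity by induction on the length $n$ of the continued fraction $[a_1,\ldots,a_n]$. The base case ($n=1$, so $K$ is a torus knot $T(2,q)$) can be checked by hand. For the inductive step the Kauffman-bracket tangle recursion reduces $K(p,q)$ to $K(q, p\bmod q)$, while the $d$-invariant recursion above together with Dedekind reciprocity reduces the $\opn{Spin}^c$ sum the same way. The $(iq)^{2\sigma(K^o)}$ terms should correspond precisely to spin$^c$ structures arising from spin structures on $L(p,q)$, where the abelian character is $\pm 1$-valued and the generic cotangent $\rho$-sum degenerates to a multiple of the signature, yielding exactly these ``boundary'' summands.

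The main obstacle is matching the two sums term by term under the Euclidean recursion. The identification $\opn{Spin}^c(p,q)\cong\Z/p\Z$ does not transform tidily under $(p,q)\mapsto(q,p\bmod q)$, and the reshuffling of the $p$ summands has to mesh exactly with the tangle-side expansion; I expect the cleanest way to close this is to rewrite both sides as discrete Fourier sums over $\Z/p\Z$ (natural on the $\rho$-side via cotangent sums, and on the Jones side via the well-known expression of the Jones polynomial of $K(p,q)$ at roots of unity as a Gauss-type sum) and compare Fourier coefficients. A secondary but delicate point is fixing $\iota$ with all signs and constants right: the integrality of $I(\mfs)$ is essentially the assertion that this normalization has been chosen correctly, and getting this matching exact — rather than merely up to a global sign — is where I expect most of the bookkeeping to live.
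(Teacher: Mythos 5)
Your proposal takes a genuinely different route from the paper, and while the overall shape (induction against the Euclidean/continued-fraction recursion, with the Ozsv\'ath--Szab\'o $d$-invariant recursion doing the work on the $\opn{Spin}^c$ side) is the right instinct, the proposal has a real gap precisely at the point you flag as the ``main obstacle.''

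The paper does not compute either side in closed form and then compare. Instead it uses Lee's structure theorem for the rational Khovanov homology of alternating links: $Kh(K,\Q)$ splits into a distinguished pair of ``$E$-type'' summands indexed by relative orientations $o \in \mathcal{O}$, plus $\tfrac{\det(K)-2^n}{2}$ ``knight-move'' summands $K(d_i)$. This packages all the grading information into a finite multiset of integers $M(K) = \{c_i\} \cup \{d_i\}$, which satisfies a simple skein relation under resolving the top crossing of the standard $2$-bridge diagram: that resolution produces $K_0 = K(q,-p)$ and $K_1 = K(p-q,q)$, i.e.\ \emph{both} branches of the Euclidean step at once, and the multiset $M(K)$ is assembled from $M(K_0)$ and $M(K_1)$ by explicit affine shifts (with separate bookkeeping for the knot versus link cases, depending on the parity of $p$). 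The theorem is then equivalent to showing that the multiset $\{-I(\mfs) - 3\sigma(K)\}_{\mfs}$, together with the $c_i$ terms, satisfies the \emph{same} skein recursion. This is a much tighter inductive structure than matching two closed-form Laurent polynomials: you never need an explicit formula for $J(K)$, $\sigma(K)$, $d(\mfs)$, or $\rho(\iota(\mfs))$ individually, only for their skein differences.

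The step you would most need, and do not have, is the concrete combinatorial fact that makes the $d$- and $\rho$-sides cancel under the Euclidean step. After applying the $d$-invariant recursion, one is left with an area term $\tfrac{(2i+1-p-q)^2}{4pq}$; the paper shows this is exactly matched by the \emph{difference of the area terms} in the triangle formula for $\rho$, provided the lattice-point counts (``$\opn{int}\Delta$'') for the two triangles $\Delta(p, nr/p)$ and $\Delta(q, ns/q)$ agree, where $ps - qr = 1$. The paper proves this by a direct estimate: the vertical distance between the two rays at $x=m$ is at most $1/p$, which is too small to straddle an integer unless one hits a vertex (which happens exactly when $|n| = q$, producing the $+1$ correction in the odd-$p$ case). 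Your suggested route through Dedekind reciprocity or a discrete Fourier / Gauss-sum comparison is not implausible, but as stated it is a hope rather than an argument, and it would have to reproduce precisely this delicate $\pm 1$ lattice-point bookkeeping, including the parity split. Absent that, the proposal does not yet close.

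One more point: you correctly guess that the $\sum_{o}(iq)^{2\sigma(K^o)}$ terms correspond to spin structures (where $\iota$ sends $\mfs$ to the trivial character, so $\rho = 0$ and $I(\mfs) = 8d(\mfs) = -2\sigma(K)$), but in the paper this observation is a consequence of having the Lee decomposition up front, not something one needs to discover separately. If you want to pursue your closed-form strategy, you would still need to establish the analog of Lee's theorem to even know that the left-hand side admits the decomposition into $\mathcal{O}$-indexed terms plus a $(q^{-1}-q)$-multiple of a sum, which is the form of the statement you are trying to prove.
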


(This is just one of many possible ways to correctly encode the signs). 

It is possible that Theorem \ref{main2} might be useful for calculating the 4-ball genus of some 2-bridge knots. We hope to explore applications, as well as a possible generalization to all alternating knots, in a subsequent paper. 

\section{Khovanov homology and skein relations for sets of coefficients}\label{skeinsection}

Here we introduce the relevant information about the Jones polynomial, which we approach via (unreduced) Khovanov homology. Fix a non-split, prime, alternating, oriented knot or 2-component link $K$, and let $\sigma(K)$ denote the signature of $K$. For $a\in \Z$, define two abstract bigraded $\Q$ vector spaces,  $$E(a):=\Q^{\frac{a+\sigma}{2},a-1}\oplus\Q^{\frac{a+\sigma}{2},a+1},$$ $$ K(a):=\Q^{\frac{a+\sigma-1}{2},a-2}\oplus \Q^{\frac{a+\sigma+1}{2},a+2},$$ which look the following way when depicted in the $(i,j)$ plane:
\begin{figure}[htp]
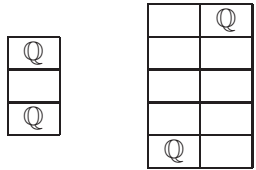

\centering
\begin{tabular}{|c|}\hline $\Q$ \\\hline  \\\hline $\Q$ \\\hline \end{tabular}
~~~~~~~~
\begin{tabular}{|c|c|}\hline  & $\Q$ \\\hline  &  \\\hline  &  \\\hline  &  \\\hline $\Q$ &  \\\hline \end{tabular}
\caption{$E(a)$ and $K(a)$, where $a$ is the grading of the ``middle'' horizontal row.}\label{F4}
\end{figure}

The following is a reinterpretation of a theorem of Lee from \cite{lee2005endomorphism}. Namely, when $K$ has $n$ components, there are $\frac{det(K)-2^{n}}{2}$ well-defined integers $d_{i}$ ($det(K)$ being the knot determinant of $K$), such that $$Kh(K,\Q)= \bigoplus_{o\in\mathcal{O}}E(2\sigma(K^{o})-3\sigma(K))\oplus \bigoplus_{i=1}^{\frac{det(K)-2^{n}}{2}} K(d_{i})$$ as bigraded vector spaces (we use the same notation for relative orientations as in the introduction).

Therefore, for an alternating knot $K$, we can combine all the grading information of Khovanov homology into a set of integers $$M(K):=\{c_{1},\dots,c_{|\mathcal{O}|}\} \cup \{d_{1},\dots, d_{\frac{det(K)-2^{n}}{2}}\}$$ where $c_{i}=2\sigma(K^{o_{i}})-3\sigma(K)$ (again, recall that our notational convention is that $o_{1}$ agrees with the orientation on $K$, so that $c_{1}=-\sigma(K)$, and when $K$ has 2 components, $c_{2}=4lk-\sigma(K)$). 

Just like the Jones polynomial, and Khovanov homology itself, this set satisfies a simple skein relation. Suppose first that we have a two component, non-split, alternating oriented diagram $K$, and suppose we choose a positive crossing between the two components, to smooth into two new diagrams $K_{0}$ and $K_{1}$, both of which will be knots. Our conventions dictate that $K_{0}$ is the diagram which inherits its orientation from $K$, and we let $$e=n_{-}(K_{1})-n_{-}(K)$$ (the difference in the number of negative crossings in the two diagrams), where $K_{1}$ is given an arbitrary orientation. Then, we have 

\begin{equation*}\label{a0}Kh^{i,j}_{K}= Kh_{K_{0}}^{i,j-1} \oplus  Kh^{i-e-1,j-3e-2}_{K_{1}}\end{equation*}

which tells us that $$M(K)=\left\{M(K_{0})+1\right\} \cup \left\{M(K_{1})+3e+2\right\}$$

with $c_{1}(K)=c_{1}(K_{0})+1$ and $c_{2}(K)=c_{1}(K_{1})+3e+2$.

Things are a bit more complicated when we begin with a knot $K$. Assume again that we start with a positive crossing, so that $K_{0}$ inherits the orientation and is therefore a link. In this case all but one of the elements of $M(K)$ are given by the set $$\left\{M(K_{0})+1\right\}\backslash \{c_{2}(K_{0})+1\}\cup \left\{M(K_{1})+3e+2\right\}\backslash\{c_{1}(K_{1})+3e+2\},$$ in particular we have $c_{1}(K)=c_{1}(K_{0})+1$. For the final element of $M(K)$, note that a simple computation tells us that $e=2lk(K_{0})$, and so by the signature formulas from \cite{manolescu2005concordance} follows that $$\left(c_{1}(K_{1})+3e+2\right)-\left(c_{2}(K_{0})+1\right)=2$$ One can then check that the remaining element of $M(K)$ is given by $c_{1}(K_{1})+3e+1=c_{2}(K_{0})+2$. 


As with the Jones polynomial, a simple inductive argument shows that these skein relations determine the sets $M(K)$, along with their value on the unknot, which is just $\{0\}$. 


%


\section{$d$-invariants and $\rho$-invariants}

We now specialize to the case of 2-bridge links $K(p,q)$. Recall that $L(p,q)$ denote the lens space which is the branched double cover of $K(p,q)$, and $\opn{Spin}^c(p,q)$ and $\mathcal{M}(p,q)$ denote the set of conjugacy classes of spin$^{c}$ structures and $SU(2)$ representatations for $L(p,q)$, respectively. Also, we will define the map $\iota: \opn{Spin}^c(p,q)\to\mathcal{M}(p,q)$, and we have $$I(\mfs):=8d(\mfs)+ \rho(\iota(\mfs))$$ In the notation of the previous section, the theorem we will prove is

\begin{theorem}\label{skeinProof} For a 2-bridge link $K(p,q),$ \begin{equation}\label{main}M(K(p,q))= \{-I(\mfs_{i})-3\sigma(K)\}_{\mfs_{i}\in \opn{Spin}^c(p,q)}\end{equation}   And, more specifically, we have \begin{equation}\label{spin}c_{i}(K)=-I(\mfs_{i})-3\sigma(K)\end{equation} where $\mfs_{1}$ is the unique spin structure on $L(p,q)$ if $K(p,q)$ is a knot, and $\mfs_{1}$ and $\mfs_{2}$ are the two spin structures on $L(p,q)$ when $K(p,q)$ is a link, in which case we take $\mfs_{1}$ to be the unique spin structure which is induced by the canonical spin structure on $S^{3}$. 
\end{theorem}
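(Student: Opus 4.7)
The plan is to induct on the complexity of the 2-bridge link $K(p,q)$ (e.g.\ on the length of a continued-fraction expansion of $p/q$, equivalently on the number of crossings in a minimal alternating diagram) and to match, step by step, the skein recursion for $M(K)$ established in the previous section with a parallel recursion for the quantity $-I(\mfs)-3\sigma(K)$ on the double-cover side. The base case is the unknot, where $L(1,0)=S^{3}$ has a unique $\opn{Spin}^c$ structure with $d=0$ and $\rho=0$, giving $I=0$ and matching $M(\text{unknot})=\{0\}$ with $\sigma=0$.

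First I would pin down the map $\iota:\opn{Spin}^c(p,q)\to\mc{M}(p,q)$. Since $\pi_1(L(p,q))=\Z/p$ is abelian, every $SU(2)$-character is conjugate into the maximal torus, so $\mc{M}(p,q)$ is canonically $(\Z/p)/\{\pm 1\}$; meanwhile $\opn{Spin}^c(L(p,q))$ is an affine $\Z/p$, and using the standard identification via the canonical $\opn{Spin}$ structure (the one induced from $S^{3}$ for even $p$, or the unique one for odd $p$) one gets a canonical bijection up to $\pm 1$. This also fixes the labelling convention used in the statement for $\mfs_1$ and $\mfs_2$.

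Next I would set up the parallel skein machinery. Smoothing a positive crossing in a 2-bridge diagram of $K(p,q)$ produces two 2-bridge links $K_0=K(p_0,q_0)$ and $K_1=K(p_1,q_1)$, whose branched double covers fit into the standard integer-surgery exact triangle among lens spaces obtained from a single crossing change lifted to the double cover. Under this triangle the $p$ elements of $\opn{Spin}^c(L(p,q))$ partition (with a single shared/overlapping class in the knot case, matching the shared element of $M$) into the $p_0$ and $p_1$ elements of $\opn{Spin}^c(L(p_0,q_0))$ and $\opn{Spin}^c(L(p_1,q_1))$. For the $d$-invariants I would use the Ozsv\'ath--Szab\'o recursive formula that computes $d(L(p,q),\mfs)$ from $d(L(q,r),\mfs')$ (with $r\equiv -p\bmod q$), which is exactly the recursion induced on the double cover by the 2-bridge continued-fraction reduction. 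For the $\rho$-invariants I would use the Casson--Gordon/APS formula for $\rho$ on lens spaces in closed form (sums of cotangents, or equivalently Dedekind-sum expressions), which satisfies the same recursion under the Euclidean step on $(p,q)$. Combining these gives a skein-type recursion
\[
\{-I(\mfs)-3\sigma(K)\}_{\mfs}=\{-I(\mfs_0)-3\sigma(K_0)+1\}_{\mfs_0}\cup\{-I(\mfs_1)-3\sigma(K_1)+3e+2\}_{\mfs_1}
\]
(with the knot-case correction for the overlapping element), in which the constants $+1$, $+3e+2$ reproduce precisely the shifts appearing in the skein relation for $M(K)$ from \S\ref{skeinsection}. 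The contributions $-3\sigma(K_i)$ versus $-3\sigma(K)$ are absorbed using the Murasugi-type signature formulas (the same ones cited from \cite{manolescu2005concordance}), which relate $\sigma(K),\sigma(K_0),\sigma(K_1)$ by exactly $3e+1$ and the like.

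The main obstacle will be this last matching, namely the precise bookkeeping of $\opn{Spin}^c$ structures across the surgery triangle and their compatibility with the two independent recursions for $d$ and for $\rho$. Concretely I need that, after applying the same Euclidean reduction step to $(p,q)$, the $\opn{Spin}^c$ labels on $L(p_0,q_0)$ and $L(p_1,q_1)$ that appear in the surgery triangle match (under $\iota$) the $SU(2)$-characters appearing in $\rho$, \emph{and} that the constants produced by the $d$-invariant recursion and by the $\rho$-invariant recursion combine into exactly $8d-\rho=I$ with the skein offsets $+1$ and $+3e+2$. Once the base case and this matching are verified, the induction closes, and the refined identification in \eqref{spin} of $\mfs_1$ (and $\mfs_2$ in the link case) with $c_1$ (and $c_2$) follows because at every inductive step the canonical spin structure inherited from $S^{3}$ is the one preserved by the reduction, and the two spin structures in the link case are distinguished exactly by the two relative orientations contributing $c_1$ and $c_2$ to $M(K)$.
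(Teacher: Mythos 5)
Your overall plan is the same as the paper's: prove the set $\{-I(\mfs)-3\sigma(K)\}$ satisfies the skein recursion of \S\ref{skeinsection} by combining the Ozsv\'ath--Szab\'o $d$-invariant recursion with a parallel recursion for the Casson--Gordon $\rho$-invariants, handle the two smoothings, and close by induction from the unknot. So the strategy is right.

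However, there is a genuine gap exactly where you flag ``the main obstacle,'' and naming an obstacle is not the same as resolving it. The technical heart of the paper's proof is the \emph{verification} that the $\rho$-invariants satisfy the needed recursion under the continued-fraction (Euclidean) reduction $(p,q)\mapsto(q,-p)$, and this is not a formal consequence of a closed-form Dedekind-sum or cotangent-sum expression; the paper proves it by an explicit lattice-point argument. Concretely, after choosing $r,s$ with $ps-qr=1$ and setting $n(i)=2i+1-p-q$, the paper must show
\[
\rho(p,r,n(i))-\rho(q,s,n(i))=-2\,\frac{n(i)^2}{pq},
\]
i.e.\ that the ``int'' terms in the two area-minus-lattice-point formulas agree. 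This is done by a geometric estimate: the two triangles $\Delta(n,nr/q)$ and $\Delta(n,ns/p)$ are shown (using $ps-qr=1$ and $|n|<q$) to pinch together so tightly that no vertical line can separate the two hypotenuses by an integer, hence they contain the same lattice points. Your proposal asserts the recursion but supplies no argument of this kind. Moreover, when $p$ is odd the index $n(i)=q$ actually occurs, the $|n|<q$ bound fails, the two triangles differ by exactly one vertex lattice point, and the recursion acquires a $+1$ correction (the paper's equation \eqref{rho2}); your proposal does not anticipate this boundary case, which is precisely the one that produces the extra shared element in the knot-case skein relation for $M(K)$.

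A smaller but related issue: you describe $\iota$ abstractly as a canonical bijection $\opn{Spin}^c(p,q)\to(\Z/p)/\{\pm1\}$ via the torus, but the paper's $\iota$ is built from the concrete indexing $i\mapsto n(i)=2i+1-p-q$ over a specific window $\mathcal{P}\subset[\tfrac{p-1}{2},\tfrac{p-1+2q}{2}]$, and it is this explicit arithmetic form that makes the lattice-point lemma go through and makes the $\opn{Spin}^c$-bookkeeping between $K$, $K_0=K(q,-p)$, and $K_1=K(p-q,q)$ consistent across the induction. Without that explicit description, your final paragraph identifying $\mfs_1,\mfs_2$ with $c_1,c_2$ is an assertion, not a proof.
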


\begin{proof}

We mention that it will turn out that $\iota$ sends the spin structures to the trivial representation, whose $\rho$-invariant is 0. Therefore Theorem \ref{skeinProof} implies that the $d$-invariant of the spin structure is $-1/4$ times the knot signature, which is already known. 



The theorem in general is proved by showing that the set on the right hand side of (\ref{main}) satisfies the same skein relation as $M(K)$. Begin with the two bridge link diagram $K=K(p,q)$, with $p>q>0$, assumed in canonical 2-bridge form (see \cite{riley1984nonabelian}), and assume that the top crossing is positive. By \cite{riley1984nonabelian}, $$K_{0}=-K(q,p)=K(q,-p),~K_{1}=K(p-q,q)$$ (the minus sign denotes mirror image). 


The proof consists of two steps, considering first $K_0$, and then $K_1$. 
\newline
\begin{enumerate}
\item[Step 1] $\mathbf{K_{0}=K(q,-p)}$. 

By \cite{ozsvath2003absolutely}, there is an indexing of the sets $\opn{Spin}^c(p,-q)$ and $\opn{Spin}^c(q,-p)$ by $\Z/p\Z$ and $\Z/q\Z$, respectively, such that for all $0\leq i\leq p+q$,  $$d(-L(p,q),i)=\frac{1}{4}-\frac{\left(2i+1-p-q\right)^{2}}{4pq} - d(-L(q,p),j)$$ where $j\equiv i$ mod $q$. We can rewrite this as   \begin{equation}\label{d}d(L(p,q),i)-d(L(q,-p),i)=-\frac{1}{4}+\frac{\left(2i+1-p-q\right)^{2}}{4pq}\end{equation} because the $d$ invariants change sign for mirror images. Now, let $\mathcal{P}$ be the set of integers $$i\in \left[\frac{p-1}{2},\frac{p-1+2q}{2}\right]$$ 
Note that reduction mod $q$ gives a bijection $\mathcal{P}\cong \Z/q\Z$. Now, we will need to choose positive integers $r ,s>0\in\Z $ such that 
\begin{equation}\label{rep}ps-qr=1\end{equation}
There is also a natural way to index the $SU(2)$ representations of $L(p,r)\cong -L(p,q)$ and $L(q,s)\cong L(q,p)$ by $\Z/p\Z$ and $\Z/q\Z$ respectiely, see \cite{casson1986cobordism} (where it is done for characters rather than $SU(2)$ representations). In terms of this indexing, for $n\in \Z/a\Z$, for example, we have the formula $$\rho(a,b,n)=4\left(\text{area}\Delta\left(n,n\frac{b}{a}\right)-\text{int}\Delta\left( n,n\frac{b}{a}\right)\right)$$ 

Here, $\Delta(a,b)$ is the triangle with vertices at $(0,0), (a,0)$ and $(a,b)$. ``Int'' is the number of integer points in $\Delta$, without counting $(0,0)$, and with boundary points counting $1/2$, except for vertices which count for $1/4$. 

We are now ready to compute the skein relation. For each $i\in\mathcal{P}$, let $$n(i)=2i+1-p-q,$$ and use this to define a representation of $\mathcal{M}(q,s)$ via the above indexing. This gives the map $$\iota :\opn{Spin}^c(q,-p)\to \mathcal{M}(q,s)\cong \mathcal{M}(q,-p
)$$ mentioned in the introduction. In the same way, by reducing the elements of $\mathcal{P}$ mod $p$, we get a map from a \textit{subset} of $\opn{Spin}^c(p,q)$ to $\mathcal{M}(p,r)$. 

We now have two sub-cases, depending on the parity of $p$. 

\begin{enumerate}
\item \textbf{$\mathbf{p}$ is even ($\mathbf{K(p,q)}$ is a link)}

Recall that we are using $\mathcal{P}$ to index corresponding elements of $\opn{Spin}^c(p,q)$ and $\opn{Spin}^c(q,-p)$. Also, the numbers $n(i)$, for $i\in\mathcal{P}$, have the form \begin{equation}\label{range}\{-(q-1), -(q-3),\dots 0,\dots q-3,q-1\}\end{equation} and we use these to represent corresponding elements of $\mathcal{M}(p,r)$ and $\mathcal{M}(q,s)$. Note that $\rho(p,q,n(i))=\rho(p,q,-n(i))$, so we can restrict to $n(i)>0$. We prove:

\begin{lemma} For $i \in \mathcal{P}$, \begin{equation}\label{rho}\rho(p,r,n(i))-\rho(q,s,n(i))=-2\frac{n(i)^{2}}{pq}\end{equation} 

\end{lemma}

\begin{proof}

It follows from (\ref{rep}) that the above difference is just the difference of the area terms, and therefore the claim amounts to proving that the int terms are equal. It follows from (\ref{rep}) and (\ref{range}) that $$\frac{p-2}{2}\leq i\leq q+\frac{p-2}{2}$$ and therefore $$|n| \leq q-1 <q$$ Therefore we have $$|\frac{ns}{p}-\frac{nr}{q}|\leq \frac{1}{p}$$ We are considering the two line segments $(pt,rt)$ for $t\in[0,n/p]$ and $(qt,st)$ for $t\in[0,n/q]$. Any segment of the line $x=m$ which lies between these two lines and has $|m|<|n|$, will have length less than or equal to $|\frac{ns}{p}-\frac{nr}{q}|\leq \frac{1}{2p}$ (the length at $x=n$). But also, the $y$ values at $x=m$ of each line will be non-integer rational numbers with denominator $p$ and $q$, respectively. Therefore, if they lie on either side of an integer, the total length of that segment must be $>\frac{1}{p}+\frac{1}{q}>\frac{1}{2p}$, a contradiction. This implies that the triangles contain the same number of lattice points. \end{proof}

Therefore, taking orientations into account and combining equations (\ref{d}) and (\ref{rho}), we have  \begin{align}\label{s1}I(\mfs_{i}(K))-I(\mfs_{i}(K_{0}))& = \\ 
8d(p,q,i)-\rho(p,q,n(i)) - \left( 8d(q,-p,i)-\rho(q,-p,n(i)) \right)& = \notag\\
& = -2 \notag\end{align} Now, because we started with a positive crossing, we use [OR] to see that $$-3\sigma(K)-(-3\sigma(K_{0}))=3$$ Adding 3 to -2 to get 1 completes the proof in this case.

\item \textbf{$\mathbf{p}$ is odd}

Suppose now that $p$ is odd. The proof goes through exactly as before, except that now, the image of $n(i)$ for $i\in\mathcal{P}$ is either 

\begin{equation}\label{range2}  \{-q,-(q-2),\dots 1,\dots q+2,q\}   \end{equation}

when $q$ is odd, or 

\begin{equation}\label{range2}  \{-q,-(q-2),\dots 0,\dots q+2,q\}   \end{equation}

when $q$ is even. In either case, we can no longer assume that $n(i)$ is strictly less than $q$. Indeed, consider the case that $n(i)=q$. Then equation (\ref{rep}) tells us that we have one more lattice point in the triangle $\Delta(q,s)$ than in the triangle $\Delta(q,qr/p)$, and it is a vertex. Going through the equations, we get the new formula 

 \begin{equation}\label{rho2}\rho(p,r,q)-\rho(q,s,q)=-2\frac{n(i)^{2}}{pq}+1\end{equation} 

We therefore conclude that $$I(\mfs_{q}(K))-I(\mfs_{q}(K_{1}))= -2+1=-1$$ Again, comparing to the skein relations for $M(K)$ from $\S \ref{skeinsection}$, this complete the proof in this case.

\end{enumerate}

\item[Step 2] $\mathbf{K_{1}=K(p-q,q)}$

Setting $q'=p-q $ using the homeomorphisms $L(p,q)\cong - L(p,q')$ and $L(p-q,q)\cong L(q',p)$ allows Step 2 to be treated as Step 1. This defines the map $\iota$ on a new set $\mathcal{P'}$ of spin$^{c}$ structures on $L(p,q)$, and one must check that this new set is disjoint from the previous (and therefore together, they make up all of $\opn{Spin}^c(p,q)$. Furthermore, one must check that the map $\iota$ is consistent across all the skein relations. It is worthwhile, though not strictly necessary, to also prove that the particular elements indexed either by $0$ or $q$ correspond to the spin structures. 

\end{enumerate}

Finally, we must consider the case where the top crossing of $K$, in its canonical 2-bridge form, is negative instead of positive. However, we can simply take the mirror image, noting that the $\rho$ and $d$-invariants switch sign under this operation. 
\end{proof}

\chapter{Strong L-spaces and left-orderability}

In this chapter, we discuss a relationship between Heegaard Floer homology of a three manifold $Y$, and certain orderability properties of its fundamental group. This is joint work with Adam Levine. We note that we have taken practically everything in this chapter, including the introductory exposition, directly from the published paper \cite{levine2012strong}. 

Heegaard Floer homology has been an extremely effective tool for answering
classical questions about 3-manifolds, particularly concerning the genera of
embedded surfaces in particular homology classes \cite{ozsvath2004holomorphicGenus}. However,
surprisingly little is known about the relationship between Heegaard Floer
homology and topological properties of Heegaard splittings of $3$-manifolds,
even though a Heegaard diagram is an essential ingredient in defining the
Heegaard Floer homology of a closed $3$-manifold $Y$. In particular, a Heegaard
diagram provides a presentation of the fundamental group of $Y$, and it is
natural to ask how this presentation is related to the Heegaard Floer chain
complex. In this paper, we shall investigate one such connection.

A \textbf{left-ordering} on a non-trivial group $G$ is a total order $<$ on the
elements of $G$ such that $g < h$ implies $kg < kh$ for any $g,h,k \in G$. A
group $G$ is called \textbf{left-orderable} if it is nontrivial and admits at
least one left-ordering. The question of which $3$-manifolds have
left-orderable fundamental group has been of considerable interest and is
closely connected to the study of foliations. For instance, if $Y$ admits an
$\R$-covered foliation (i.e., a taut foliation such that the leaf-space of the
induced foliation on the universal cover $\widetilde{Y}$ is homeomorphic to
$\R$), then $\pi_1(Y)$ is left-orderable. Howie and Short showed that the fundamental group of any irreducible $3$-manifold $Y$ with
$b_1(Y)>0$ is left-orderable, reducing the question to that of rational
homology spheres.

In its simplest form, Heegaard Floer homology associates to a closed, oriented
$3$-manifold $Y$ a $\Z/2\Z$--graded, finitely generated abelian group $\HF(Y)$.
This group is computed as the homology of a free chain complex $\CF(\HH)$
associated to a Heegaard diagram $\HH$ for $Y$; different choices of diagrams
for the same manifold yield chain-homotopy-equivalent complexes. The group
$\CF(\HH)$ depends only on the combinatorics of $\HH$, but the differential on
$\CF(\HH)$ involves counts of holomorphic curves that rely on auxiliary choices
of analytic data. If $Y$ is a rational homology sphere, then the Euler
characteristic of $\HF(Y)$ is equal to $\abs{H_1(Y;\Z)}$, which implies that
the rank of $\HF(Y)$ is greater than $\abs{H_1(Y;\Z)}$. $Y$ is called an
\textbf{$L$-space} if $\HF(Y) \cong \Z^{\abs{H_1(Y;\Z)}}$; thus, $L$-spaces have
the simplest possible Heegaard Floer homology. Examples of $L$-spaces include
$S^3$, lens spaces (whence the name), all manifolds with finite fundamental
group, and double branched covers of alternating (or, more broadly,
\textbf{quasi-alternating}) links. Additionally, Ozsv\'ath and Szab\'o
\cite{ozsvath2004holomorphicGenus} showed that if $Y$ is an $L$-space, it does not admit any taut
foliation; whether the converse is true is an open question.

The following related conjecture, stated formally by Boyer, Gordon, and Watson
\cite{boyer2013spaces}, has recently been the subject of considerable
attention:
\begin{conj} \label{conj:Lspace}
Let $Y$ be a closed, connected, 3-manifold. Then $\pi_{1}(Y)$ is not
left-orderable if and only if $Y$ is an $L$-space.
\end{conj}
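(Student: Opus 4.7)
The plan is to prove the two directions of the biconditional separately, using a Heegaard diagram $\HH$ as the common bridge between the two worlds. A genus-$g$ Heegaard diagram $\HH = (\Sigma_g, \ai, \bi)$ simultaneously produces the chain complex $\CF(\HH)$, generated by points of $\T_\alpha \cap \T_\beta$, and a presentation $\langle x_1,\dots,x_g \mid r_1,\dots,r_g \rangle$ of $\pi_1(Y)$, with the $x_i$ read from a spine of one handlebody and the $r_i$ read off the $\b$ curves. The strategy is to translate the Heegaard Floer condition and the left-orderability condition into compatible statements about this presentation, so that each implies the other at the diagram level.

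For the $(\Leftarrow)$ direction, assume $Y$ is an $L$-space. The approach would build on Chapter 4 of this thesis: being a ``strong $L$-space'' is precisely a diagram-theoretic condition guaranteeing that $\rank \CF(\HH) = |H_1(Y;\Z)|$ for some $\HH$, from which one can directly obstruct left-orderings of $\pi_1(Y)$. The first step is to show that every $L$-space admits a Heegaard diagram whose complex ``cancels down'' to one whose chain-level geometry still constrains $\pi_1(Y)$. One promising route is through surgery: using the Ozsv\'ath--Szab\'o surgery exact triangle, reduce the statement to $L$-space knots and their surgeries, and then couple this with recent work of Boyer--Clay--Watson on left-orderability under Dehn filling, matching the two sets of surgery formulas slope by slope.

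For the $(\Rightarrow)$ direction, assume $\pi_1(Y)$ is not left-orderable. Since $\pi_1$ of any irreducible $3$-manifold with $b_1(Y)>0$ is left-orderable by Howie--Short, one may immediately reduce to the case that $Y$ is a rational homology sphere. The plan is then to factor the implication through taut foliations: if $Y$ carries a co-orientable taut foliation then by Ozsv\'ath--Szab\'o it is not an $L$-space, so it suffices to construct such a foliation whenever $\rank \HF(Y) > |H_1(Y;\Z)|$. Concretely, one would try to promote any ``extra'' generator of $\HF(Y)$ beyond those forced by the Euler characteristic to an essential branched surface that can be split open to a taut foliation; lifting to $\widetilde Y$, the leaf space would then produce a nontrivial action of $\pi_1(Y)$ on $\R$, contradicting non-left-orderability.

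The hard part will be the reverse direction, and more specifically constructing any concrete geometric structure (a taut foliation, an essential lamination, or even a single faithful $\pi_1$-action on $\R$) from the bare inequality $\rank \HF(Y) > |H_1(Y;\Z)|$. There is currently no known mechanism that extracts leaves of a foliation from Floer homology generators, and even the weaker implication ``not an $L$-space $\Rightarrow$ admits a taut foliation'' is a well-known open problem. A secondary, more manageable obstacle on the forward direction is that a single Heegaard diagram gives only one of the infinitely many presentations of $\pi_1(Y)$, whereas left-orderability is a property of the group itself, so one would also need invariance arguments under Heegaard moves analogous to those that establish well-definedness of $\HF(Y)$.
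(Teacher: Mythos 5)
The statement you are trying to prove is stated in the paper as a \emph{conjecture} (due to Boyer--Gordon--Watson), not a theorem: the paper explicitly treats it as open and proves only the special case that \emph{strong} $L$-spaces have non-left-orderable fundamental group (Theorem \ref{thm:main}), via a sign/determinant obstruction applied to the group presentation coming from a strong Heegaard diagram. So there is no proof in the paper to compare yours against, and your proposal does not close the gap either --- by your own admission, both of your directions bottom out in open problems. For the $(\Rightarrow)$ direction, the step ``promote an extra generator of $\HF(Y)$ to an essential branched surface that splits open to a taut foliation'' is precisely the well-known open converse to Ozsv\'ath--Szab\'o's theorem that $L$-spaces admit no taut foliations; no mechanism is known for extracting geometric objects from Floer generators, and even granting a taut foliation, passing from it to left-orderability of $\pi_1$ requires co-orientability and an $\R$-covered (or at least faithful-action) hypothesis that a generic taut foliation does not supply. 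For the $(\Leftarrow)$ direction, your reduction from ``$L$-space'' to ``strong $L$-space'' is exactly the missing content: the $L$-space condition is a statement about homology after an unknown amount of cancellation, and there is no known procedure to produce a Heegaard diagram with no differential (indeed Theorem \ref{thm:S3} shows the Poincar\'e sphere is an $L$-space that is \emph{not} a strong $L$-space, so the reduction is false as stated). The surgery-triangle/Dehn-filling matching you sketch is a genuine research program, not an argument.

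In short: what you have written is a reasonable survey of why the conjecture is hard and of the partial results surrounding it (Howie--Short for $b_1>0$, the taut foliation connection, the strong $L$-space case proved in this chapter), but neither direction is proved, and the statement remains a conjecture. If your goal is to produce something provable, the correct target is Theorem \ref{thm:main}: restrict to strong $L$-spaces, where the chain-level condition $\CF(\HH)\cong\Z^{\abs{H_1(Y;\Z)}}$ forces all local intersection signs in the Heegaard diagram to align, and feed the resulting sign pattern of the presentation matrix into the combinatorial non-left-orderability criterion of Lemmas \ref{lemma:notLO} and \ref{lemma:matrix}.
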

This conjecture is known to hold for all geometric, non-hyperbolic 3-manifolds
\cite{boyer2013spaces, boyer2005orderable, lisca2007ozsvath,
peters2009spaces}. Additionally, Boyer--Gordon--Watson \cite{boyer2013spaces}
and Greene \cite{greene2011alternating} have shown that the double branched cover
of any non-split alternating link in $S^3$
--- which is generically a hyperbolic $3$-manifold --- has non-left-orderable
fundamental group.

Here, we prove Conjecture \ref{conj:Lspace} for manifolds that are
``$L$-spaces on the chain level.'' To be precise, we call a 3-manifold $Y$ a
\textbf{strong $L$-space} if it admits a Heegaard diagram $\HH$ such that
$\CF(\HH) \cong \Z^{\abs{H_1(Y;\Z)}}$. This purely combinatorial condition
implies that the differential on $\CF(\HH)$ vanishes, without any consideration
of holomorphic disks. We call such a Heegaard diagram a \textbf{strong Heegaard
diagram}. By considering the presentation for $\pi_1(Y)$ associated to a strong
Heegaard diagram, we prove:
\begin{theorem}[main theorem, chapter 4]\label{thm:main}
If $Y$ is a strong $L$-space, then $\pi_1(Y)$ is not left-orderable.
\end{theorem}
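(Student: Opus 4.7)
The plan is to extract from any strong Heegaard diagram a presentation of $\pi_1(Y)$ with very restrictive combinatorial shape and then rule out left-orderability of the presented group.

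First, given a strong Heegaard diagram $\mathcal{H} = (\Sigma_g, \{\alpha_i\}, \{\beta_i\}, z)$, I would write down the standard Heegaard presentation
\[
\pi_1(Y) = \langle a_1,\dots,a_g \mid r_1,\dots,r_g \rangle,
\]
where $a_i$ is a loop in the $\alpha$-handlebody dual to $\alpha_i$, and $r_j$ is the word obtained by traversing $\beta_j$ once and recording $a_i^{\pm 1}$ at each point of $\alpha_i \cap \beta_j$ with exponent equal to the sign of that intersection. This presentation depends only on the combinatorics of $\mathcal{H}$.

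Second, I would translate the strong $L$-space hypothesis into a sign condition on intersection points. Since $\operatorname{rank}(\CF(\mathcal{H})) = |H_1(Y;\Z)|$ and, up to sign, $\chi(\HF(Y)) = |H_1(Y;\Z)|$, every generator $\mathbf{x} \in \T_\alpha \cap \T_\beta$ in $\Sym^g(\Sigma_g)$ must contribute with the same sign. Unwinding, this says that for every permutation $\sigma$ and every choice of $x_i \in \alpha_i \cap \beta_{\sigma(i)}$, the quantity $\operatorname{sign}(\sigma)\prod_i \epsilon(x_i)$ is a constant. Varying $\mathbf{x}$ with $\sigma$ fixed forces all points of $\alpha_i \cap \beta_j$ to share a common sign $\epsilon_{ij} \in \{0,\pm 1\}$; then varying $\sigma$ forces every nonzero term in the Leibniz expansion of the matrix $(\epsilon_{ij})$ to have the same sign. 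A standard fact about sign patterns says that such a matrix can be turned into a nonnegative one by independent sign flips of rows and columns; performing the corresponding replacements $a_i \mapsto a_i^{\pm 1}$ and $r_j \mapsto r_j^{\pm 1}$ yields a new presentation in which every relator $r_j$ is a \emph{positive} word in the new generators.

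Third, I would show that such a positive presentation cannot present a nontrivial left-orderable group. Suppose for contradiction that $G := \pi_1(Y)$ admits a left-ordering with positive cone $P$. After removing any trivial generators, each $a_i$ lies in $P$ or in $P^{-1}$. Because every $r_j$ is a nonempty positive word equal to the identity, its support cannot lie entirely in $P$ or entirely in $P^{-1}$, for then $r_j$ itself would be strictly positive or strictly negative. Hence every relator mixes positive and negative generators. The next step is to combine this mixed-sign condition across all $g$ relators, using the fact that the exponent-sum matrix $N = (|\alpha_i \cap \beta_j|)$ has $|\det N| = |H_1(Y;\Z)| > 0$, to exhibit an element of $G$ that is simultaneously positive and negative.

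The main obstacle is this last step: the mixed-sign observation by itself is too weak. I would attack it by processing the relators in a careful order --- at each stage picking the ordering-smallest generator appearing negatively in some relator and using that relator to solve for it --- thereby rewriting the presentation as an iterated amalgamated product or HNN extension in which the positivity constraints propagate. Invertibility of $N$ should guarantee that this elimination can be carried out for all $g$ generators, leaving a presentation of the trivial group; combined with the initial assumption $G \neq 1$, this yields the desired contradiction and completes the proof.
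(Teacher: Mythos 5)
Your first two moves match the paper's: extract the Heegaard presentation of $\pi_1(Y)$, and observe that the strong $L$-space condition forces every generator $\mathbf{x}\in\T_\alpha\cap\T_\beta$ to carry the same intersection sign, so that all nonzero terms in the formal Leibniz expansion of the intersection-sign matrix agree.

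From there you diverge, and the reduction you lean on is false. A $(0,\pm 1)$-matrix whose nonzero permutation terms all have the same sign cannot in general be made nonnegative by independent row and column sign flips. Consider
\[
H=\begin{pmatrix}1&-1&0\\ 1&1&-1\\ 1&1&1\end{pmatrix}.
\]
Its four nonzero Leibniz terms are all $+1$, so $H$ satisfies your hypothesis, yet no choice of signs $d_i,e_j\in\{\pm 1\}$ makes $d_ih_{ij}e_j\ge 0$ for all $i,j$: entries $(1,1)$, $(1,2)$, $(2,1)$ force $d_1e_1>0$, $d_1e_2<0$, $d_2e_1>0$, which together give $d_2e_2<0$, contradicting $h_{22}=1$. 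So you cannot generally normalize the Heegaard presentation to have positive relators, and your third step never gets started. There is also a subtler issue in step two: if $\alpha_i\cap\beta_j$ contains points of both signs but none of them lies in any generalized diagonal of the intersection matrix, there is no well-defined single sign $\epsilon_{ij}$; the paper introduces the extra symbol $*$ precisely to record this case, and its hypotheses explicitly forbid $*$ from appearing in any nonzero Leibniz term.

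The paper sidesteps normalization entirely. Lemma~\ref{lemma:notLO} gives a direct non-left-orderability criterion: a presentation cannot be left-ordered if, for every sign assignment $d_i\in\{0,+,-\}$ to the generators (not all zero), the row-scaled matrix has a nonzero column with all-$+$ or all-$-$ nonzero entries, i.e.\ a relator forced to be strictly positive or strictly negative. Lemma~\ref{lemma:matrix} then proves that the ``all Leibniz terms agree in sign, none meets a $*$'' condition implies this criterion: assuming a bad sign assignment, one follows a chain of off-diagonal entries that disagree with their column's diagonal, extracts a cycle, and produces a second permutation whose Leibniz term must have the opposite sign --- contradiction. Your sense that some staged elimination should do the job is not far off in spirit from this cycle argument, but the actual mechanism must be run on the sign matrix \emph{without} first passing to positive relators; you have neither supplied that mechanism nor established the normalization you propose to substitute for it.
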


The standard Heegaard diagram for a lens space is easily seen to be a strong
diagram. Moreover, Greene \cite{greene2013spanning} constructed a strong Heegaard
diagram for the double branched cover of any alternating link in $S^3$; indeed,
Boyer--Gordon--Watson's proof that the fundamental group of such a manifold is
not left-orderable makes use of the group presentations associated to that
Heegaard diagram. At present, we do not know of any strong $L$-space that
cannot be realized as the double branched cover of an alternating link; while
it seems unlikely that every strong $L$-space can be realized in this manner,
it is unclear what obstructions could be used to prove this claim. (Indeed, the
question of finding an alternate characterization of alternating links is a
famous open problem posed by R. H. Fox.) Nevertheless, our theorem seems like a
useful step in the direction of Conjecture \ref{conj:Lspace} in that it relies
only on data contained in the Heegaard Floer chain complex.

Furthermore, the following theorem, which is well-known but does not appear in
the literature, indicates that being a strong $L$-space is a fairly restrictive
condition:
\begin{theorem} \label{thm:S3}
If $Y$ is an integer homology sphere that is a strong $L$-space, then $Y \cong
S^3$.
\end{theorem}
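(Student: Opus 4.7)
The plan is to exploit the extreme rigidity of having a Heegaard diagram whose chain complex has rank one, extract a balanced presentation of $\pi_1(Y)$ that must present the trivial group, and finish by invoking the Poincar\'e conjecture.

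Step 1 (rank $1$ as a permanent). Fix a strong Heegaard diagram $\HH=(\Sigma_g,\{\alpha_i\},\{\beta_i\},z)$ for $Y$. Since $|H_1(Y;\Z)|=1$, the diagram satisfies $\CF(\HH)\cong\Z$. The generators of $\CF(\HH)$ are the points of $\T_\alpha\cap\T_\beta\subset\Sym^g(\Sigma_g)$, i.e.\ tuples $\mathbf{x}=(x_1,\ldots,x_g)$ with $x_i\in\alpha_i\cap\beta_{\sigma(i)}$ for some $\sigma\in S_g$, so their total number equals the permanent of the geometric intersection matrix $M$ with entries $M_{ij}:=|\alpha_i\cap\beta_j|$. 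Hence $\mathrm{perm}(M)=1$.

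Step 2 (combinatorial lemma). I would prove: a nonnegative integer matrix $M$ with $\mathrm{perm}(M)=1$ admits, after independently permuting rows and columns, the normal form in which $M_{ii}=1$ for all $i$, and moreover at least one row or one column contains a single nonzero entry. The first part follows because exactly one permutation $\sigma_0$ can contribute to the permanent and must contribute $\prod_i M_{i,\sigma_0(i)}=1$. For the second part, the bipartite graph with edges $\{(i,j):M_{ij}>0\}$ has $\{(i,i)\}$ as its unique perfect matching, for any other nonidentity matching would add another term of value $\geq 1$ to the permanent. If every vertex of this bipartite graph had degree $\geq 2$, one could follow non-matching edges to build a cycle alternating between matching and non-matching edges, swap the two, and obtain a second perfect matching, contradicting uniqueness.

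Step 3 (induction on $g$). The diagram $\HH$ determines the standard balanced presentation $\pi_1(Y)=\langle x_1,\ldots,x_g\mid r_1,\ldots,r_g\rangle$ in which $x_i$ is dual to $\alpha_i$ in the $\alpha$-handlebody and the total number of $x_i^{\pm 1}$-syllables in $r_j$ is exactly $M_{ij}$. If column $j_0$ of $M$ has its unique nonzero entry at $(j_0,j_0)$, then $r_{j_0}=x_{j_0}^{\pm 1}$, so $x_{j_0}=1$ in $\pi_1(Y)$; if row $i_0$ has its unique nonzero entry at $(i_0,i_0)$, then $r_{i_0}$ contains exactly one $x_{i_0}^{\pm 1}$-syllable while no other $r_j$ contains $x_{i_0}$ at all, so a Tietze transformation eliminates the pair $(x_{i_0},r_{i_0})$ without modifying the remaining relations. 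In either case the reduced presentation has $g-1$ generators and $g-1$ relations, with intersection-count matrix equal to the corresponding minor of $M$; expansion of $\mathrm{perm}(M)$ along the chosen row or column shows this minor also has permanent $1$. Inducting on $g$ with base case $g=0$ (empty presentation) yields $\pi_1(Y)=1$, and the Poincar\'e conjecture gives $Y\cong S^3$.

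The step I expect to require the most care is the bookkeeping in Step 3, especially in the row case: one must check that Tietze eliminating $x_{i_0}$ really does not disturb the intersection data of the surviving relations, and this relies crucially on the fact that row $i_0$ is supported on the single entry $(i_0,i_0)$, equivalently that $\alpha_{i_0}$ is disjoint from every $\beta_j$ with $j\neq i_0$. Once the combinatorial lemma of Step 2 is in hand and the dictionary between the Heegaard presentation and $M$ is made precise, the induction is essentially automatic, and invoking the Poincar\'e conjecture is the only nontrivial external input.
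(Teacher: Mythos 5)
Your proof is correct, and the combinatorial heart is the same as the paper's: a bipartite graph in which every vertex has degree at least two and which admits a perfect matching necessarily admits a second one, found by orienting matching edges one way and non-matching edges the other and extracting an alternating directed cycle. This is exactly the cycle argument in the paper's proof by contradiction. What differs is how you convert the resulting ``leaf'' into a reduction of the problem. The paper's Lemma~\ref{lemma:destab} interprets a $1$-valent vertex of $\Gamma_\HH$ geometrically: the corresponding curve meets exactly one curve of the opposite color in a single point, so after handleslides one can destabilize the Heegaard diagram to a strong diagram of genus $g-1$. Taking a minimal-genus strong diagram and applying the cycle argument then forces $g=1$, at which point the diagram is the standard genus-$1$ diagram for $S^3$ and one concludes directly, with no external input. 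You instead perform the reduction at the level of group presentations, via a Tietze elimination, deduce $\pi_1(Y)=1$, and then invoke the Poincar\'e conjecture. That is a valid proof, but it imports a vastly heavier theorem than the statement requires: the paper's route stays entirely within elementary Heegaard diagram manipulations (handleslides and destabilization). Worth noting: your Tietze elimination in the ``row'' case, where $\alpha_{i_0}$ is disjoint from every $\beta_j$ with $j\neq i_0$ and meets $\beta_{i_0}$ once, is precisely the algebraic shadow of the geometric destabilization in Lemma~\ref{lemma:destab}; if you carry out that reduction on the Heegaard diagram itself rather than on the presentation, the appeal to Poincar\'e disappears and your induction becomes essentially the paper's minimal-genus argument.
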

In particular, there exist integer homology spheres that are $L$-spaces (e.g.,
the Poincar\'e homology sphere) but not strong $L$-spaces. The fact that the
condition of being a strong $L$-space detects $S^3$ suggests that it might be
possible to obtain a more explicit characterization or even a complete
classification of strong $L$-spaces. We shall present a graph-theoretic
proof of Theorem \ref{thm:S3} due to J. Greene.

\section{Proofs of Theorem \ref{thm:main} and \ref{thm:S3}}

To prove Theorem \ref{thm:main}, we will use a simple obstruction to
left-orderability that can be applied to group presentations.

Let $X$ denote the set of symbols $\{0,+,-,*\}$. These symbols are meant to
represent the possible signs of real numbers: $+$ and $-$ represent positive
and negative numbers, respectively, and $*$ represents a number whose sign is
not known. As such, we define an associative multiplication operation on $X$ by
the following rules: (1) $0 \cdot \epsilon = 0$ for any $\epsilon \in X$; (2)
$+ \cdot + = - \cdot - = +$; (3) $+ \cdot - = - \cdot + = -$; and (4) $\epsilon
\cdot
* =
* \cdot \epsilon =
*$ for $\epsilon \in \{+,-,*\}$.

A group presentation $\GG = \gen{ x_1,\dots, x_m | r_1, \dots, r_n}$ gives rise
to an $m \times n$ matrix $E(\GG) = (\epsilon_{i,j})$ with entries in $X$ by
the following rule:
\begin{equation} \label{eq:epsilonij}
\epsilon_{i,j} = \begin{cases}
 0 & \text{if neither $x_i$ nor $x_i^{-1}$ occur in $r_j$} \\
 + & \text{if $x_i$ appears in $r_j$ but $x_i^{-1}$ does not} \\
 - & \text{if $x_i^{-1}$ appears in $r_j$ but $x_i$ does not} \\
 * & \text{if both $x_i$ and $x_i^{-1}$ occur in $r_j$}. \\
\end{cases}
\end{equation}

\begin{lemma} \label{lemma:notLO}
Let $\GG = \gen{ x_1,\dots, x_m | r_1, \dots, r_n}$ be a group presentation
such that for any $d_1, \dots, d_m \in \{0,+,-\}$, not all zero, the matrix $M$
obtained from $E(\GG)$ by multiplying the $i\Th$ row by $d_i$ has a nonzero
column whose nonzero entries are either all $+$ or all $-$. Then the group $G$
presented by $\GG$ is not left-orderable.
\end{lemma}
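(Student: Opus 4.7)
The plan is to prove the contrapositive: assume $G$ admits a left-ordering with positive cone $P = \{g \in G : g > 1\}$ and derive a contradiction from the hypothesis on $E(\GG)$. For each generator, assign $d_i \in \{0, +, -\}$ according to whether the image of $x_i$ in $G$ is trivial, lies in $P$, or lies in $P^{-1}$. Since $G$ is nontrivial (a prerequisite for being left-orderable) and is generated by $x_1, \dots, x_m$, not all of the $d_i$ can vanish, so the hypothesis applies to this particular tuple.

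Applying it, the row-scaled matrix has a nonzero column $j$ whose nonzero entries are, without loss of generality, all $+$. The key observation is that the product $d_i \cdot \epsilon_{i,j}$, computed in $X$, records exactly the possible signs in $G$ of the occurrences of $x_i^{\pm 1}$ appearing in the word $r_j$: an entry $0$ means $x_i$ contributes only copies of $1 \in G$ (either it is absent from $r_j$, or $d_i = 0$ so $x_i = 1$ in $G$); an entry $+$ or $-$ means every occurrence of $x_i^{\pm 1}$ in $r_j$ contributes a factor in $P$ or in $P^{-1}$, respectively; and an entry $*$ means the occurrences contribute elements of mixed signs. Since column $j$ has only entries in $\{0, +\}$ and is nonzero, the word $r_j$, read left to right in $G$, is a product of elements of $P \cup \{1\}$ with at least one factor in $P$.

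Now invoke the elementary fact that in any left-ordered group, $P$ is closed under multiplication and $P \cdot (P \cup \{1\}) \subseteq P$; in particular, a product of elements of $P \cup \{1\}$ containing at least one factor in $P$ lies in $P$, and so cannot equal $1$. This contradicts the fact that $r_j$ is a relator of $\GG$ and therefore represents the identity of $G$, completing the proof.

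The argument is essentially bookkeeping once the sign calculus is matched with the semantics of signs in a left-ordered group; the only subtle point I expect is verifying that the rule $0 \cdot \epsilon = 0$ correctly encodes the situation where $d_i = 0$ (so $x_i = 1$ in $G$) causes every occurrence of $x_i^{\pm 1}$ in $r_j$ to contribute $1$, regardless of the exponent. Beyond this, there is no real obstacle.
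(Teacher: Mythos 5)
Your proof is correct and follows essentially the same route as the paper: assign $d_i$ according to the sign of $x_i$ in the putative left-ordering, observe that not all $d_i$ vanish because a left-orderable group is nontrivial and the $x_i$ generate, invoke the hypothesis to obtain a column $j$ with nonzero entries all of one sign, and conclude that $r_j$ is a product of elements of $P \cup \{1\}$ with at least one strictly in $P$, hence $r_j \ne 1$ --- contradicting that $r_j$ is a relator. The only extra content in your writeup is the (correct) unpacking of the sign calculus, including the remark that a $0$ entry arises either from $x_i$ being absent from $r_j$ or from $d_i = 0$, and that, in either case, every literal of $x_i^{\pm1}$ in $r_j$ represents the identity; this is exactly what the paper's phrase ``a product of generators $\dots$ that are all nonnegative, at least one of which is strictly positive'' is silently relying on.
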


\begin{proof}
Suppose that $<$ is a left-ordering on $G$, and let $d_i$ be $0$, $+$, or $-$
according to whether $x_i=1$, $x_i>1$, or $x_i<1$ in $G$. Since $G$ is
nontrivial, at least one of the $d_i$ is nonzero. If the $j\Th$ column of $M$
is nonzero and has entries in $\{0,+\}$ (resp.~$\{0,-\}$), the relator $r_j$ is
a product of generators $x_i$ that are all nonnegative (resp.~nonpositive) in
$G$, and at least one of which is strictly positive (resp.~negative). Thus,
$r_j>1$ (resp.~$r_j<1$) in $G$, which contradicts the fact that $r_j$ is a
relator.
\end{proof}

We shall focus on presentations with the same number of generators as
relations. For a permutation $\sigma \in S_n$, let $\sign(\sigma) \in \{+,-\}$
denote the sign of $\sigma$ ($+$ if $\sigma$ is even, $-$ if $\sigma$ is odd).
The key technical lemma is the following:

\begin{lemma} \label{lemma:matrix}
Let $\GG = \gen{ x_1,\dots, x_n | r_1, \dots, r_n}$ be a group presentation
such that $E(\GG)$ has the following properties:
\begin{enumerate}
\item There exists some permutation $\sigma_0 \in S_n$ such that
$\epsilon_{1,\sigma_0(1)}, \dots, \epsilon_{n,\sigma_0(n)}$ are all nonzero.
\item For any permutation $\sigma \in S_n$ such that
$\epsilon_{1,\sigma(1)}, \dots, \epsilon_{n,\sigma(n)}$ are all nonzero, we
have $\epsilon_{1,\sigma(1)}, \dots, \epsilon_{n,\sigma(n)} \in \{+,-\}$.
\item For any two permutations $\sigma, \sigma'$ as in (2), we have
\[
\sign(\sigma) \cdot \epsilon_{1,\sigma(1)} \cdot \dots \cdot
\epsilon_{n,\sigma(n)} = \sign(\sigma') \cdot \epsilon_{1,\sigma'(1)} \cdot
\dots \cdot \epsilon_{n,\sigma'(n)}.
\]
\end{enumerate}
Then the group $G$ presented by $\GG$ is not left-orderable.
\end{lemma}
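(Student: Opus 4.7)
The plan is to verify the hypothesis of Lemma \ref{lemma:notLO}: for every choice of $d_1, \dots, d_n \in \{0, +, -\}$ not all zero, I will exhibit a column of the scaled matrix $M$ whose non-zero entries are all of the same sign, from which Lemma \ref{lemma:notLO} yields the non-left-orderability of $G$. First I would normalize $E(\GG)$: by permuting the relators, arrange the distinguished matching $\sigma_0$ of hypothesis (1) to be the identity, so that $\epsilon_{i, i} \neq 0$ for every $i$; hypothesis (2) then forces $\epsilon_{i, i} \in \{+, -\}$, and inverting the generators for which $\epsilon_{i, i} = -$ further arranges $\epsilon_{i, i} = +$ for all $i$. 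Under this normalization, hypothesis (3) asserts that every permutation $\sigma$ with all $\epsilon_{i, \sigma(i)}$ non-zero satisfies $\sign(\sigma) \cdot \prod_i \epsilon_{i, \sigma(i)} = +$.

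Let $S = \{i : d_i \neq 0\}$, and call an index $i_0 \in S$ \emph{compatible with $d$} if, for every $i \in S$, the entry $\epsilon_{i, i_0}$ is either $0$ or equals $d_{i_0} d_i$ (as a product in $X$). If some $i_0 \in S$ is compatible, then a direct check shows $M_{i, i_0} = d_i \cdot \epsilon_{i, i_0}$ is either $0$ or equal to $d_{i_0}$ for all $i \in S$, so column $i_0$ of $M$ has all its non-zero entries of sign $d_{i_0}$ and fulfills the hypothesis of Lemma \ref{lemma:notLO}. The heart of the proof is therefore to produce such a compatible $i_0 \in S$.

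I would proceed by contradiction. If no $i_0 \in S$ is compatible, then for each $i_0 \in S$ I can pick a ``bad pivot'' $\psi(i_0) \in S \setminus \{i_0\}$ for which $\epsilon_{\psi(i_0), i_0}$ is non-zero but not equal to $d_{i_0} d_{\psi(i_0)}$; that is, $\epsilon_{\psi(i_0), i_0}$ equals either $-d_{i_0} d_{\psi(i_0)}$ or $*$. The fixed-point-free map $\psi \colon S \to S$ must contain a cycle $i_1 \to i_2 \to \dots \to i_k \to i_1$, and I form the permutation $\tau \in S_n$ that acts on $\{i_1, \dots, i_k\}$ by $\tau(i_{l+1}) = i_l$ (cyclically) and fixes all other indices. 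Every $\epsilon_{i, \tau(i)}$ is then non-zero. If any cycle entry $\epsilon_{i_{l+1}, i_l}$ equals $*$, then $\tau$ directly contradicts hypothesis (2). Otherwise every cycle entry equals $-d_{i_l} d_{i_{l+1}}$, and a short calculation---using that $\tau$ is a $k$-cycle and that the product $\prod_{l=1}^{k} d_{i_l} d_{i_{l+1}}$ is a perfect square because each $d_{i_l}$ appears twice around the cycle---gives $\sign(\tau) \cdot \prod_i \epsilon_{i, \tau(i)} = -$, contradicting the normalized form of hypothesis (3). Either way we reach a contradiction.

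The main obstacle---really the crux of the argument---is choosing the right permutation to test against conditions (2) and (3). The pivoting construction is natural in retrospect: the failure of compatibility at every index of $S$ forces a fixed-point-free map whose cycles isolate precisely the ``bad'' matrix entries, and the telescoping of the $d_{i_l}$-factors around any such cycle guarantees that the sign of $\tau$ conspires with the entry product to violate either hypothesis (2) (via a $*$-entry) or hypothesis (3) (via the wrong overall sign).
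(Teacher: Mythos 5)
Your proposal is correct and follows essentially the same strategy as the paper's proof: normalize so that $\sigma_0$ is the identity, then argue by contradiction by constructing a cycle of ``bad'' entries whose permutation violates condition (3). The only substantive difference is cosmetic --- you additionally invert generators to force the diagonal to be all $+$, which streamlines the sign computation, whereas the paper works with an arbitrary nonzero diagonal; both proofs then obtain the same factor $(-1)^{2k-1}=-1$ from a $k$-cycle whose $k$ off-diagonal entries each flip a sign relative to the diagonal. You have also oriented the cycle consistently (defining $\tau(i_{l+1})=i_l$ so that $\tau$ selects exactly the entries $\epsilon_{i_{l+1},i_l}$ produced by your pivoting map), which is the correct convention.
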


In other words, if we consider the formal determinant
\[
\det(E(\GG)) = \sum_{\sigma \in S_n} \sign(\sigma) \cdot \epsilon_{1,\sigma(1)}
\cdot \dots \cdot \epsilon_{n,\sigma(n)},
\]
condition (1) says that at least one summand is nonzero, condition (2) says
that no nonzero summand contains a $*$, and condition (3) says that every
nonzero summand has the same sign.

\begin{proof}
By reordering the generators and relations, it suffices to assume that
$\sigma_0$ from condition (1) is the identity, so that $\epsilon_{i,i} \ne 0$
for $i= 1, \dots, n$, and hence $\epsilon_{i,i} \in \{+,-\}$ by condition (2).
We shall show that $E(\GG)$ satisfies the hypotheses of Lemma
\ref{lemma:notLO}.

Suppose, then, toward a contradiction, that $d_1, \dots, d_n$ are elements of
$\{0,+,-\}$, not all zero, such that every nonzero column of the matrix $M$
obtained as in Lemma \ref{lemma:notLO} contains a nonzero off-diagonal entry
(perhaps a $*$) that is not equal to the diagonal entry in that column. Denote
the $(i,j)\Th$ entry of $M$ by $m_{i,j}$.

We may inductively construct a sequence of distinct indices $i_1, \dots, i_k
\in \{1, \dots, n\}$ such that
\begin{enumerate}
\item[(A)] $m_{i_j,i_j} \in \{+,-\}$ for each $j=1, \dots, m$,
and
\item[(B)]$m_{i_{j+1},i_{j}} \ne 0$ and $m_{i_{j+1},i_j} \ne
m_{i_j,i_j}$
\end{enumerate}
for each $j=1, \dots, k$, taken modulo $k$. Specifically, we begin by choosing
any $i_1$ such that $m_{i_1,i_1} \ne 0$. Given $i_j$, our assumption on $M$
states that we can choose $i_{j+1}$ satisfying assumption (B) above; we then
have $m_{i_{j+1},i_{j+1}} \ne 0$ since otherwise the whole $i_{j+1}\Th$ row
would have to be zero. Repeating this procedure, we eventually obtain an index
$i_k$ that is equal to some previously occurring index $i_{k'}$, where $k'<k$.
The sequence $i_{k'+1}, \dots, i_k$, relabeled accordingly, then satisfies the
assumptions (A) and (B).

Define a $k$-cycle $\sigma\in S_n$ by $\sigma(i_j) = i_{j+1}$ for $j=1, \dots,
k$ mod $k$, and $\sigma(i') = i'$ for $i' \not\in \{i_1, \dots, i_k\}$. By
construction, $\epsilon_{i, \sigma(i)} \ne 0$ for each $i = 1, \dots, n$, so
the sequence $(\epsilon_{1, \sigma(1)}, \dots, \epsilon_{n,\sigma(n)})$
contains no $*$s by condition (2). The sequences $(\epsilon_{1, \sigma(1)},
\dots, \epsilon_{n,\sigma(n)})$ and $(\epsilon_{1,1}, \dots, \epsilon_{n,n})$
differ in exactly $k$ entries, and the signature of $\sigma$ is $(-1)^{k-1}$.
This implies that
\[
\sign(\sigma) \cdot \epsilon_{1, \sigma(1)} \cdot \dots \cdot
\epsilon_{n,\sigma(n)} = (-1)^{2k-1} \sign(\operatorname{id}) \cdot
\epsilon_{1, 1} \cdot \dots \cdot \epsilon_{n,n},
\]
which contradicts condition (3).
\end{proof}

Now we will apply Lemma \ref{lemma:matrix} to prove Theorem \ref{thm:main}. We
first recall some basic facts about the Heegaard Floer chain complex. A
\textbf{Heegaard diagram} is a tuple $\HH = (\Sigma, \bm\alpha, \bm\beta)$, where
$\Sigma$ is a closed, oriented surface of genus $g$, $\bm\alpha = (\alpha_1,
\dots, \alpha_g)$ and $\bm\beta = (\beta_1, \dots, \beta_g)$ are each
$g$-tuples of pairwise disjoint simple closed curves on $\Sigma$ that are
linearly independent in $H_1(\Sigma;\Z)$, and each pair of curves $\alpha_i$
and $\beta_j$ intersect transversely.  A Heegaard diagram $\HH$ determines a
closed, oriented $3$-manifold $Y = Y_\HH$ with a self-indexing Morse function
$f: Y \to [0,3]$ such that $\Sigma = f^{-1}(3/2)$, the $\alpha$ circles are the
belt circles of the $1$-handles of $Y$, and the $\beta$ circles are the
attaching circles of the $2$-handles. If we orient the $\alpha$ and $\beta$
circles, the Heegaard diagram determines a group presentation
\[
\pi_1(Y) = \gen{a_1, \dots, a_g \mid b_1, \dots, b_g},
\]
where the generators $a_1, \dots, a_g$ correspond to the $\alpha$ circles, and
$b_j$ is the word obtained as follows: If $p_1, \dots, p_k$ are the
intersection points of $\beta_j$ with the $\alpha$ curves, indexed according to
the order in which they occur as one traverses $\beta_i$, and $p_\ell \in
\alpha_{i_\ell} \cap \beta_i$ for $\ell = 1, \dots, k$, then
\begin{equation} \label{eq:relation}
b_j = \prod_{\ell = 1}^k a_{i_\ell}^{\eta(p_i)},
\end{equation}
where $\eta(p_i) \in \{\pm 1\}$ is the local intersection number of
$\alpha_{i_\ell}$ and $\beta_j$ at $p_i$.

Let $\Sym^g(\Sigma)$ denote the $g\Th$ symmetric product of $\Sigma$, and let
$\T_\alpha, \T_\beta \subset \Sym^g(\Sigma)$ be the $g$-dimensional tori
$\alpha_1 \times \dots \times \alpha_g$ and $\beta_1 \times \dots \times
\beta_g$, which intersect transversely in a finite number of points. Assuming
$Y$ is a rational homology sphere, $\CF(\HH)$ is the free abelian group
generated by points in $\SS_\HH = \T_\alpha \cap \T_\beta$.\footnote{For
general $3$-manifolds, we must restrict to a particular class of so-called
admissible diagrams.} More explicitly, these are tuples $\x = (x_1, \dots,
x_g)$, where $x_i \in \alpha_i \cap \beta_{\sigma(i)}$ for some permutation
$\sigma \in S_g$. The differential on $\CF(\HH)$ counts holomorphic Whitney
disks connecting points of $\SS_\HH$ (and depends on an additional choice of a
basepoint $z \in \Sigma$), but we do not need to describe this in any detail
here.

Orienting the $\alpha$ and $\beta$ circles determines orientations of
$\T_\alpha$ and $\T_\beta$. For $\x \in \SS_\HH$, let $\eta(\x)$ denote the
local intersection number of $\T_\alpha$ and $\T_\beta$ at $\x$. It is not hard
to see that if $\x = (x_1, \dots, x_g)$ with $x_i \in \alpha_i \cap
\beta_{\sigma(i)}$, we have
\begin{equation} \label{eq:grading}
\eta(\x) = \sign(\sigma) \prod_{i=1}^g \eta(x_i).
\end{equation}
These orientations determine a $\Z/2$-valued grading $\gr$ on $\CF(Y)$ by the
rule that $(-1)^{\gr(\x)} = \eta(\x)$; the differential shifts this grading by
$1$. If $Y$ is a rational homology sphere, then with respect to this grading,
we have $\chi( \CF(\HH)) = \pm \abs{H_1(Y;\Z)}$; we may choose the orientations
such that the sign is positive. (See \cite[Section 5]{ozsvath2004holomorphicDiskInvariantsPropertiesAndApplications} for
further details.)

The proof of Theorem \ref{thm:main} is thus completed with the following:

\begin{lemma}
If $\HH$ is a strong Heegaard diagram for a strong $L$-space $Y$, then the
corresponding presentation for $\pi_1(Y)$ satisfies the hypotheses of Lemma
\ref{lemma:matrix}.
\end{lemma}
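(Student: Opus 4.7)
The plan is to translate the three hypotheses of Lemma \ref{lemma:matrix} into statements about the intersection pattern of $\bm{\alpha}$ with $\bm{\beta}$ in $\HH = (\Sigma,\bm{\alpha},\bm{\beta})$, and then deduce them all from the chain-level equality $|\SS_\HH| = |H_1(Y;\Z)|$ via a permanent-versus-determinant comparison.

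First I would set up the dictionary between $E(\GG)$ and the geometry of $\HH$. Writing $\tilde I_{i,j} = |\alpha_i \cap \beta_j|$ for the geometric intersection number and $I_{i,j} = \sum_{p \in \alpha_i \cap \beta_j} \eta(p)$ for the algebraic one, formula (\ref{eq:relation}) identifies the entry $\epsilon_{i,j}$ of $E(\GG)$ as follows: $\epsilon_{i,j} = 0$ iff $\tilde I_{i,j} = 0$; $\epsilon_{i,j} = *$ iff $|I_{i,j}| < \tilde I_{i,j}$; and otherwise $\epsilon_{i,j} \in \{+,-\}$ is the common sign of all intersection points, in which case $I_{i,j} = \epsilon_{i,j}\cdot \tilde I_{i,j}$.

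The key input is the counting identity
\begin{equation*}
|\SS_\HH| \;=\; \sum_{\sigma \in S_g} \prod_{i=1}^{g} \tilde I_{i,\sigma(i)} \;=\; \operatorname{perm}(\tilde I),
\end{equation*}
together with the fact that the algebraic intersection matrix $I$ presents $H_1(Y;\Z)$ with respect to the Heegaard splitting, giving $|H_1(Y;\Z)| = |\det(I)|$ (finite by the strong $L$-space assumption). Using $|I_{i,j}| \le \tilde I_{i,j}$ together with the triangle inequality, I then have
\begin{equation*}
|H_1(Y;\Z)| \;=\; \bigl|\det(I)\bigr| \;\le\; \sum_{\sigma \in S_g} \prod_{i=1}^{g} |I_{i,\sigma(i)}| \;\le\; \operatorname{perm}(\tilde I) \;=\; |\SS_\HH|.
\end{equation*}
The strong $L$-space hypothesis $|\SS_\HH| = |H_1(Y;\Z)|$ forces both inequalities to be equalities.

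Now I would read off the three hypotheses from this rigidity. Hypothesis (1) is immediate, since $|H_1(Y;\Z)| \ge 1$ implies $\SS_\HH \ne \emptyset$, and any $\x \in \SS_\HH$ exhibits a permutation $\sigma_0$ with $\alpha_i \cap \beta_{\sigma_0(i)}\ne \emptyset$ for every $i$. Saturation of the second (entrywise) inequality says that whenever $\prod_i \tilde I_{i,\sigma(i)} \ne 0$ we have $|I_{i,\sigma(i)}| = \tilde I_{i,\sigma(i)}$ for every $i$; by the dictionary, no $*$ appears along such a diagonal, which is hypothesis (2). Saturation of the first (triangle) inequality says no cancellation occurs in $\sum_\sigma \sign(\sigma) \prod_i I_{i,\sigma(i)}$, i.e.\ all nonzero summands share a common sign; since on a nonzero summand the sign of $\sign(\sigma) \prod_i I_{i,\sigma(i)}$ agrees with $\sign(\sigma)\cdot \epsilon_{1,\sigma(1)}\cdots \epsilon_{g,\sigma(g)}$, this is exactly hypothesis (3).

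The main obstacle is the sign bookkeeping that converts saturation of the triangle inequality into hypothesis (3) in the precise form demanded by Lemma \ref{lemma:matrix}; but this is essentially the sign identity already recorded in (\ref{eq:grading}), so the heart of the proof is really the observation that the permanent-equals-determinant equality $\operatorname{perm}(\tilde I) = |\det(I)|$ is extremely rigid and forces $I$ to have a monotone sign structure in exactly the sense required.
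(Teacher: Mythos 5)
Your proof is correct and takes essentially the same route as the paper's. The key rigidity — that $\operatorname{rank}\CF(\HH) = |\chi(\CF(\HH))| = |H_1(Y;\Z)|$ forces all local intersection signs to align along every nonzero diagonal — is exactly what the paper extracts from the observation that $\CF(\HH)$ is supported in a single $\Z/2$-grading (so $\eta(\x)=1$ for all $\x$); you repackage the same fact as saturation of the permanent-versus-determinant chain $|\det(I)| \le \sum_{\sigma}\prod_i |I_{i,\sigma(i)}| \le \operatorname{perm}(\tilde I)$.
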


\begin{proof}
If $\rank( \CF(\HH)) = \chi(\CF(\HH)) = \abs{H_1(Y;\Z)}$, then $\CF(\HH)$ is
supported in a single grading, so $\eta(\x) = 1$ for all $\x \in \T_\alpha \cap
\T_\beta$. The result then follows quickly from equations \eqref{eq:epsilonij},
\eqref{eq:relation}, and \eqref{eq:grading}. Specifically, since $\SS_\HH \ne
\emptyset$, there exists $\sigma_0 \in S_g$ such that $\alpha_i \cap
\beta_{\sigma_0(i)} \ne \emptyset$ for each $i$, and hence $\epsilon_{i,
\sigma_0(i)} \ne 0$. If $\alpha_i$ and $\beta_j$ contain a point $x$ that is
part of some $\x \in \SS_\HH$, then every other point $x' \in \alpha_i \cap
\beta_j$ has $\eta(x') = \eta(x)$, and hence $\epsilon_{i,j} = \eta(\x) \in
\{+, -\}$. Finally, if $\x = (x_1, \dots, x_g)$ and $\x' = (x'_1, \dots,
x'_g)$, with $x_i \in \alpha_i \cap \beta_{\sigma(i)}$ and $x'_i \in \alpha_i
\cap \beta_{\sigma'(i)}$, then \eqref{eq:grading} and the fact that $\eta(\x) =
\eta(\x')$ imply the final hypothesis.
\end{proof}

To prove Theorem \ref{thm:S3}, we use a simple graph-theoretic argument. Given
a Heegaard diagram $\HH$, let $\Gamma_\HH$ denote the bipartite graph with
vertex sets $\AA = \{A_1, \dots, A_g\}$ and $\BB = \{B_1, \dots, B_g\}$, with
an edge connecting $A_i$ and $B_j$ for each intersection point in $\alpha_i
\cap \beta_j$. The set $\SS_\HH$ thus corresponds to the set of perfect
matchings on $\Gamma_\HH$.

\begin{lemma} \label{lemma:destab}
If $\HH$ is a Heegaard diagram of genus $g>1$, and $\Gamma_\HH$ contains a leaf
(a $1$-valent vertex), then $Y_\HH$ admits a Heegaard diagram $\HH'$ of genus
$g-1$ with a bijection between $\SS_\HH$ and $\SS_{\HH'}$.
\end{lemma}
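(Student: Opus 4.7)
The plan is to perform a Heegaard destabilization exploiting the leaf. Without loss of generality, relabel so that the leaf is $A_g$ with its unique incident edge going to $B_g$; then $\alpha_g \cap \beta_g = \{p\}$ is a single transverse intersection and $\alpha_g \cap \beta_k = \emptyset$ for every $k<g$. The key observation is that the $\alpha_g$-handle in the $\alpha$-handlebody and the $\beta_g$-handle in the $\beta$-handlebody form a canceling pair in the handle decomposition of $Y_\HH$, since their attaching data meet in exactly one transverse point; cancelling them yields a Heegaard splitting of $Y_\HH$ of genus $g-1$.

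To construct $\HH'$ explicitly, I would first perform handleslides of the $\alpha_k$ (for $k<g$) over $\alpha_g$ in order to arrange that $\beta_g$ is geometrically disjoint from each $\alpha_k$ with $k<g$. Because $\alpha_g$ is disjoint from every $\beta_m$ with $m<g$, such a handleslide can affect only the intersections $\alpha_k \cap \beta_g$; the other intersection data $\alpha_k \cap \beta_m$ for $m<g$ is preserved on the nose, since the handleslide alters $\alpha_k$ only in an arbitrarily small neighborhood of $\alpha_g$ together with the slide arc (a sub-arc of $\beta_g$), and this neighborhood is disjoint from the other $\beta_m$. Using innermost arcs of $\beta_g$ among the $\alpha$-curve crossings, one slides iteratively until $\beta_g$ meets no $\alpha_k$ with $k<g$. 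Then a regular neighborhood $T$ of $\alpha_g \cup \beta_g$ in $\Sigma_g$ is a once-punctured torus containing no other attaching curves, and the standard destabilization move excises $T$, caps the resulting boundary circle with a disk to form a closed surface $\Sigma'' \cong \Sigma_{g-1}$, and deletes $\alpha_g$ and $\beta_g$. The outcome is the Heegaard diagram $\HH' = (\Sigma'', \{\alpha_k\}_{k<g}, \{\beta_k\}_{k<g})$ for $Y_\HH$.

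For the bijection: since $A_g$ is a leaf of $\Gamma_\HH$, every $\x = (x_1, \dots, x_g) \in \SS_\HH$ must have $x_g = p$. By the preservation of intersections in the previous paragraph, $\Gamma_{\HH'}$ is canonically identified with the subgraph $\Gamma_\HH \setminus \{A_g, B_g\}$. The forgetful map $\x \mapsto (x_1, \dots, x_{g-1})$ is therefore a bijection $\SS_\HH \to \SS_{\HH'}$.

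The principal technical obstacle is showing that the handleslide step can actually reduce $|\alpha_k \cap \beta_g|$ to zero geometrically for every $k<g$: a single handleslide over $\alpha_g$ changes the algebraic intersection $[\alpha_k] \cdot [\beta_g]$ by $\pm 1$ but may fail to reduce the geometric count directly, so one must combine carefully chosen sequential slides (along innermost arcs of $\beta_g$) with auxiliary bigon-cancelling isotopies to drive the geometric intersection number to zero. The leaf condition is essential in two ways: it ensures that slides over $\alpha_g$ do not disturb intersections with any other $\beta_m$, and it gives $\alpha_g$ the role of the meridian dual to $\beta_g$ in the eventual standard Heegaard torus summand, which is precisely what makes the destabilization available.
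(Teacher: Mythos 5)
Your proof is correct and follows essentially the same strategy as the paper's: first handleslide the curves $\alpha_k$ ($k<g$) over $\alpha_g$ along arcs of $\beta_g$ to make $\beta_g$ disjoint from the other $\alpha$-curves, then destabilize, and observe that the bijection on generators follows from the leaf condition forcing $x_g = p$. The ``principal technical obstacle'' you flag is in fact resolved cleanly by the innermost-arc choice you describe (the leaf hypothesis guarantees the band and the pushed-off copy of $\alpha_g$ are disjoint from every $\beta_m$ with $m<g$, so no new intersections or bigons are created), so no further bigon-cancelling isotopies are needed.
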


\begin{proof}
If the vertex $A_i$ is $1$-valent, then the curve $\alpha_i$ intersects one
$\beta$ curve, say $\beta_j$, in a single point and is disjoint from the
remaining $\beta$ curves. By a sequence of handleslides of the $\alpha$ curves,
we may remove any intersections of $\beta_j$ with any $\alpha$ curve other than
$\alpha_i$, without introducing or removing any intersection points. We may
then destabilize to obtain $\HH'$. Since every element of $\SS_\HH$ includes
the unique point of $\alpha_i \cap \beta_j$, we have a bijection between
$\SS_\HH$ and $\SS_{\HH'}$. (Indeed, $\Gamma_\HH'$ is obtained from
$\Gamma_\HH$ by deleting $A_i$ and $B_j$, which does not change the number of
perfect matchings.) The case where $B_i$ is $1$-valent is analogous.
\end{proof}

\begin{proof}[Proof of Theorem \ref{thm:S3}]
Let $\HH$ be a strong Heegaard diagram for $Y$ whose genus $g$ is minimal among
all strong Heegaard diagrams for $Y$. Suppose, toward a contradiction, that
$g>1$. By Lemma \ref{lemma:destab}, $\Gamma_\HH$ has no leaves. By assumption,
$\Gamma_\HH$ has a single perfect matching $\mu$. We direct the edges of
$\Gamma_\HH$ by the following rule: an edge points from $\AA$ to $\BB$ if it is
included in $\mu$ and from $\BB$ to $\AA$ otherwise. Thus, every vertex in
$\AA$ has exactly one outgoing edge, and every vertex in $\BB$ has exactly one
incoming edge. We claim that $\Gamma_\HH$ contains a directed cycle $\sigma$.
Let $\gamma$ be a maximal directed path in $\Gamma_\HH$ that visits each vertex
at most once, and let $v$ be the initial vertex of $\gamma$. If $v \in \BB$,
then there is a unique directed edge $e$ in $\Gamma_\HH$ from some point $w \in
\AA$ to $v$, and $e$ is not included in $\gamma$. Likewise, if $v \in \AA$,
then there is an edge $e$ not in $\gamma$ connecting $v$ and some point $w \in
\BB$ since $v$ is not a leaf, and $e$ is directed from $w$ to $v$ since the
only outgoing edge from $v$ is in $\gamma$. In either case, the maximality of
$\gamma$ implies that $w \in \gamma$, which means that $\gamma \cup e$ contains
a directed cycle. However, $(\mu \minus \sigma) \cup (\sigma \minus \mu)$ is
then another perfect matching for $\Gamma_\HH$.

Thus, the Heegaard diagram $\HH$ is a torus with a single $\alpha$ curve and a
single $\beta$ curve intersecting in a single point, which describes the
standard genus-1 Heegaard splitting of $S^3$.

\end{proof}

\bibliography{thesisBibliography}
\bibliographystyle{plain}

\end{document}